\newtheorem{theorem}{Theorem}[section]
\newtheorem{lemma}{Lemma}[section]
\newtheorem{definition}{Definition}[section]
\newtheorem{proposition}{Proposition}[section]
\newtheorem{corollary}{Corollary}[section]
\newtheorem{remark}{Remark}[section]
\newenvironment{proof}{{\noindent \bf Proof:}}{\hfill$\Box$\medskip}
\definecolor{lred}{rgb}{1,0.8,0.8}
\definecolor{lblue}{rgb}{0.8,0.8,1}
\definecolor{dred}{rgb}{0.6,0,0}
\definecolor{dblue}{rgb}{0,0,0.5}
\definecolor{dgreen}{rgb}{0,0.5,0.5}
 \title{Computing one-bit compressive sensing via zero-norm regularized DC loss model and its surrogate}
 \author{Kai Chen\footnote{School of Mathematics, South China University of Technology, Guangzhou, China}\ \ \
 Ling Liang\footnote{School of Mathematics, South China University of Technology, Guangzhou, China}\ \ {\rm and}\ \
 Shaohua Pan\footnote{Corresponding author (shhpan@scut.edu.cn), School of Mathematics, South China University of Technology, Guangzhou, China}
 }
 \date{}
\begin{document}

 \maketitle

 \begin{abstract}
  One-bit compressed sensing is very popular in signal processing and communications
  due to its low storage costs and low hardware complexity, but it is a challenging task
  to recover the signal by using the one-bit information. In this paper, we propose
  a zero-norm regularized smooth difference of convexity (DC) loss model and
  derive a family of equivalent nonconvex surrogates covering the MCP and SCAD
  surrogates as special cases. Compared to the existing models, the new model and
  its SCAD surrogate have better robustness. To compute their $\tau$-stationary points,
  we develop a proximal gradient algorithm with extrapolation and establish the convergence
  of the whole iterate sequence. Also, the convergence is proved to have a linear rate
  under a mild condition by studying the KL property of exponent $0$ of the models.
  Numerical comparisons with several state-of-art methods show that
  in terms of the quality of solution, the proposed model and its SCAD surrogate
  are remarkably superior to the $\ell_p$-norm regularized models,
  and are comparable even superior to those sparsity constrained models
  with the true sparsity and the sign flip ratio as inputs.
 \end{abstract}

  \noindent
  {\bf Keywords:}
  One-bit compressive sensing, zero-norm, DC loss, equivalent surrogates, global convergence, KL property

 \section{Introduction}\label{sec1.0}

  Compressive sensing (CS) has gained significant progress in theory and algorithms over
  the past few decades since the seminal works \cite{Candes05,Donoho06}. It aims
  to recover a sparse signal $x^{\rm true}\in\mathbb{R}^n$ from a small number of
  linear measurements. One-bit compressive sensing, as a variant of the CS,
  was proposed in \cite{Boufounos08} and had attracted considerable interests in
  the past few years (see, e.g., \cite{Dai16,Jacques13,Plan13a,Yan12,Zhang14,HuangS18}).
  Unlike the conventional CS which relies on real-valued measurements, one-bit CS aims to
  reconstruct the sparse signal $x^{\rm true}$ from the sign of measurement. Such a new setup is
  appealing because (i) the hardware implementation of one-bit quantizer is low-cost and efficient;
  (ii) one-bit measurement is robust to nonlinear distortions \cite{Boufounos10};
  and (iii) in certain situations, for example, when the signal-to-noise ratio
  is low, one-bit CS performs even better than the conventional one \cite{Laska12}.
  For the applications of one-bit CS, we refer to the recent survey paper \cite{Li18}.
 \subsection{Review on the related works}\label{sec1.1}

  In the noiseless setup, the one-bit CS acquires the measurements via the linear
  model $b={\rm sgn}(\Phi x^{\rm true})$, where $\Phi\in\mathbb{R}^{m\times n}$
  is the measurement matrix and the function ${\rm sgn}(\cdot)$ is applied to
  $\Phi x^{\rm true}$ in a component-wise way. Here, for any $t\in\mathbb{R}$,
  ${\rm sgn}(t)=1$ if $t>0$ and $-1$ otherwise, which has a little difference
  from the common ${\rm sign}(\cdot)$. By following the theory
  of conventional CS, the ideal optimization model for one-bit CS is as follows:
  \begin{equation}\label{znorm-min}
   \min_{x\in\mathbb{R}^n}\Big\{\|x\|_0\ \ {\rm s.t.}\ \ b={\rm sgn}(\Phi x),\,\|x\|=1\Big\},
  \end{equation}
  where $\|x\|_0$ denotes the zero-norm (i.e., the number of nonzero entries) of $x\in\mathbb{R}^n$,
  and $\|x\|$ means the Euclidean norm of $x$. The unit sphere constraint is introduced
  into \eqref{znorm-min} to address the issue that the scale information of a signal is lost during
  the one-bit quantization. Due to the combinatorial properties of the functions
  ${\rm sgn}(\cdot)$ and $\|\cdot\|_0$, the problem \eqref{znorm-min} is NP-hard.
  Some earlier works (see, e.g., \cite{Boufounos08,Plan13a,Wang15}) mainly focus on
  its convex relaxation model, obtained by replacing the zero-norm by the $\ell_1$-norm
  and relaxing the consistency constraint $b={\rm sgn}(\Phi x)$ into the linear constraint
  $b\circ(\Phi x)\ge 0$, where the notation ``$\circ$'' means the Hadamard operation of vectors.

  In practice the measurement is often contaminated by noise before the quantization and
  some signs will be flipped after quantization due to quantization distoration, i.e.,
  \begin{equation}\label{observation}
    b=\zeta\circ{\rm sgn}(\Phi x^{\rm true}+\varepsilon)
  \end{equation}
  where $\zeta\in\{-1,1\}^m$ is a random binary vector and $\varepsilon\in\mathbb{R}^m$ denotes
  the noise vector. Let $L\!:\mathbb{R}^m\to\mathbb{R}_{+}$ be a loss function to ensure
  data fidelity as well as to tolerate the existence of sign flips.
  Then, it is natural to consider the zero-norm regularized loss model:
  \begin{equation}\label{znorm-reg}
   \min_{x\in\mathbb{R}^n}\Big\{L(Ax)+\lambda\|x\|_0\ \ {\rm s.t.}\ \ \|x\|=1\Big\}\ \ {\rm with}\
   A:={\rm Diag}(b)\Phi,
  \end{equation}
  and achieve a desirable estimation for the true signal $x^{\rm true}$ by tuning
  the parameter $\lambda>0$. Consider that the projection mapping onto the intersection
  of the sparsity constraint set and the unit sphere has a closed norm. Some researchers
  prefer the following model or a similar variant to achieve a desirable estimation
  for $x^{\rm true}$ (see, e.g., \cite{Boufounos09,Yan12,Dai16,Zhou21}):
  \begin{equation}\label{znorm-constr}
   \min_{x\in\mathbb{R}^n}\Big\{L(Ax)\ \ {\rm s.t.}\ \ \|x\|_0\le s,\,\|x\|=1\Big\},
  \end{equation}
 where the positive integer $s$ is an estimation for the sparsity of $x^{\rm true}$.
 For this model, if there is a big difference between the estimation $s$ from the true sparsity $s^*$,
 the mean-squared-error (MSE) of the associated solutions will become worse.
 Take the model in \cite{Zhou21} for example. If the difference between the estimation $s$
 from the true sparsity $s^*$ is $2$, the MSE of the associated solutions will have a difference
 at least $20\%$ (see Figure \ref{fig_K}). Moreover, now it is unclear how to achieve
 such a tight estimation for $s^*$. We find that the numerical experiments for the zero-norm constrained 
 model all use the true sparsity as an input (see \cite{Yan12,Zhou21}). 
 In this work, we are interested in the regularization models.
  \begin{figure}[!h]
   \centering
  \includegraphics[width=8cm,height=6cm]{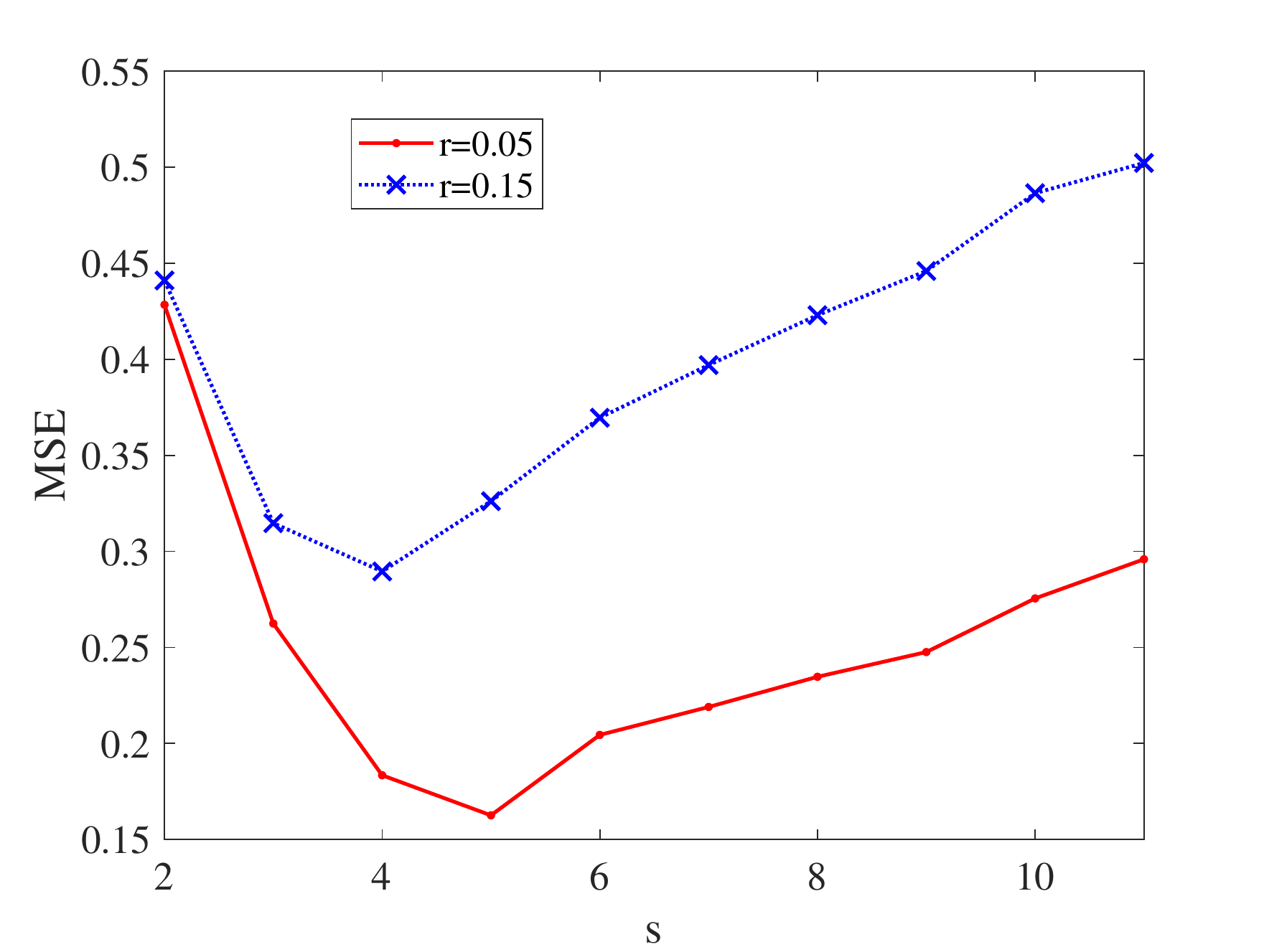}
   \caption{MSE of the solution yielded by GPSP with different $s$ (\small the data generated
   in the same way as in Section 5.1 with $(m,n,s^*)=(500,1000,5),(\mu,\varpi)=(0.3,0.1)$ and $\Phi$ of type I)}
  \label{fig_K}
 \end{figure}

  The existing loss functions for the one-bit CS are mostly convex, including the one-sided
  $\ell_2$ loss \cite{Jacques13,Yan12}, the linear loss \cite{Plan13b,Zhang14},
  the one-sided $\ell_1$ loss \cite{Jacques13,Yan12,Peng19b}, the pinball loss \cite{HuangS18}
  and the logistic loss \cite{Fang14}. Among others, the one-sided $\ell_1$ loss is closely
  related to the hinge loss function in machine learning \cite{Cucker05,ZhangT04},
  which was reported to have a superior performance to the one-sided $\ell_2$ loss (see \cite{Jacques13}),
  and the pinball loss provides a bridge between the hinge loss and the linear loss.
  One can observe that these convex loss functions all impose a large penalty on the flipped samples,
  which inevitably imposes a negative effect on the solution quality of the model \eqref{znorm-reg}.
  In fact, for the pinball loss in \cite{HuangS18}, when the involved parameter $\tau$
  is closer to $0$, the penalty degree on the flipped samples becomes smaller.
  This partly accounts for $\tau=-0.2$ instead of $\tau=-1$ used for numerical experiments there.
  Recently, Dai et al. \cite{Dai16} derived a one-sided zero-norm loss by maximizing
  a posteriori estimation of the true signal. This loss function and its lower semicontinuous
  (lsc) majorization proposed there impose a constant penalty for those flipped samples,
  but their combinatorial property brings much difficulty to the solution of the associated
  optimization models. Inspired by the superiority of the ramp loss in SVM \cite{Brooks11,HuangS14},
  in this work we are interested in a more general DC loss: 
  \begin{equation}\label{DC-loss}
    L_{\sigma}(z):=\sum_{i=1}^m\vartheta_{\!\sigma}(z_i)
   \ \ {\rm with}\ \vartheta_{\!\sigma}(t):=
   \left\{\begin{array}{cl}
    \max(0,-t)&{\rm if}\ t\ge-\sigma,\\
    \sigma&{\rm if}\ t<-\sigma,
   \end{array}\right.
  \end{equation}
  where $\sigma\in(0,1]$ is a constant representing the penalty degree imposed on the flip outlier.
  Clearly, the DC function $\vartheta_{\!\sigma}$ imposes a small fixed penalty for those flip outliers.

  Due to the nonconvexity of the zero-norm and the sphere constraint, some researchers
  are interested in the convex relaxation of \eqref{znorm-reg} obtained by replacing
  the zero-norm by the $\ell_1$-norm and the unit sphere constraint by the unit ball constraint;
  see \cite{Zhang14,HuangS18,Laska11,Plan13b}. However, as in the conventional CS,
  the $\ell_1$-norm convex relaxation not only has a weak sparsity-promoting ability
  but also leads to a biased solution; see the discussion in \cite{Fan01}. Motivated by this,
  many researchers resort to the nonconvex surrogate functions of the zero-norm,
  such as the minimax concave penalty (MCP) \cite{Zhu15,HuangY18},
  the sorted $\ell_1$ penalty \cite{HuangY18}, the logarithmic
  smoothing functions \cite{Shen16}, the $\ell_q\,(0<\!q\!<1)$-norm \cite{Fan21},
  and the Schur-concave functions \cite{Peng19a}, and then develop algorithms for
  solving the associated nonconvex surrogate problems to achieve a better sparse solution.
  To the best of our knowledge, most of these algorithms are lack of convergence certificate.
  Although many nonconvex surrogates of the zero-norm are used for the one-bit CS,
  there is no work to investigate the equivalence between the surrogate problems
  and the model \eqref{znorm-reg} in a global sense.
 \subsection{Main contributions}\label{sec1.2}

  The nonsmooth DC loss $L_{\sigma}$ is desirable to ensure data fidelity and tolerate
  the existence of sign flips, but its nonsmoothness is inconvenient for the solution
  of the associated regularization model \eqref{znorm-reg}. With $0<\!\gamma<\!{\sigma}/{2}$
  we construct a smooth approximation to it:
 \begin{equation}\label{Lsig-gam}
  L_{\sigma,\gamma}(z):=\sum_{i=1}^m\vartheta_{\sigma,\gamma}(z_i)
  \ \ {\rm with}\ \
  \vartheta_{\sigma,\gamma}(t)\!:=\!
    \left\{\begin{array}{cl}
     0           &{\rm if}\ t>0,\\
    t^2/(2\gamma)&{\rm if}\ -\!\gamma<t\le 0,\\
   -t-\gamma/2&{\rm if}\ -\!\sigma\!+\!\gamma<t<-\gamma,\\
    \!\sigma\!-\!\frac{\gamma}{2}\!-\!\frac{(t+\sigma+\gamma)^2}{4\gamma}&{\rm if}\ -\!(\sigma\!+\!\gamma)\le t\le\!\gamma\!-\!\sigma,\\
   \!\sigma\!-\!{\gamma}/2&{\rm if} \  t<-(\sigma\!+\!\gamma).
   \end{array}\right.
 \end{equation}
 Clearly, as the parameter $\gamma$ approaches to $0$, $\vartheta_{\!\sigma,\gamma}$ is closer to $\vartheta_{\!\sigma}$.
 As illustrated in Figure \ref{fig_gam}, the smooth function $\vartheta_{\!\sigma,\gamma}$ approximates
 $\vartheta_{\!\sigma}$ very well even with $\gamma=0.05$. Therefore, in this paper
 we are interested in the zero-norm regularized smooth DC loss model
 \begin{equation}\label{znorm-Moreau}
  \min_{x\in\mathbb{R}^n}F_{\sigma,\gamma}(x):=L_{\sigma,\gamma}(Ax)+\delta_{\mathcal{S}}(x)+\lambda\|x\|_0,
 \end{equation}
 where $\mathcal{S}$ denotes a unit sphere whose dimension is known from the context,
 and $\delta_{\mathcal{S}}$ means the indicator function of $\mathcal{S}$, i.e.,
 $\delta_{\mathcal{S}}(x)=0$ if $x\in\mathcal{S}$ and otherwise $\delta_{\mathcal{S}}(x)=+\infty$.
 \begin{figure}[!h]
   \centering
  \includegraphics[width=8cm,height=5cm]{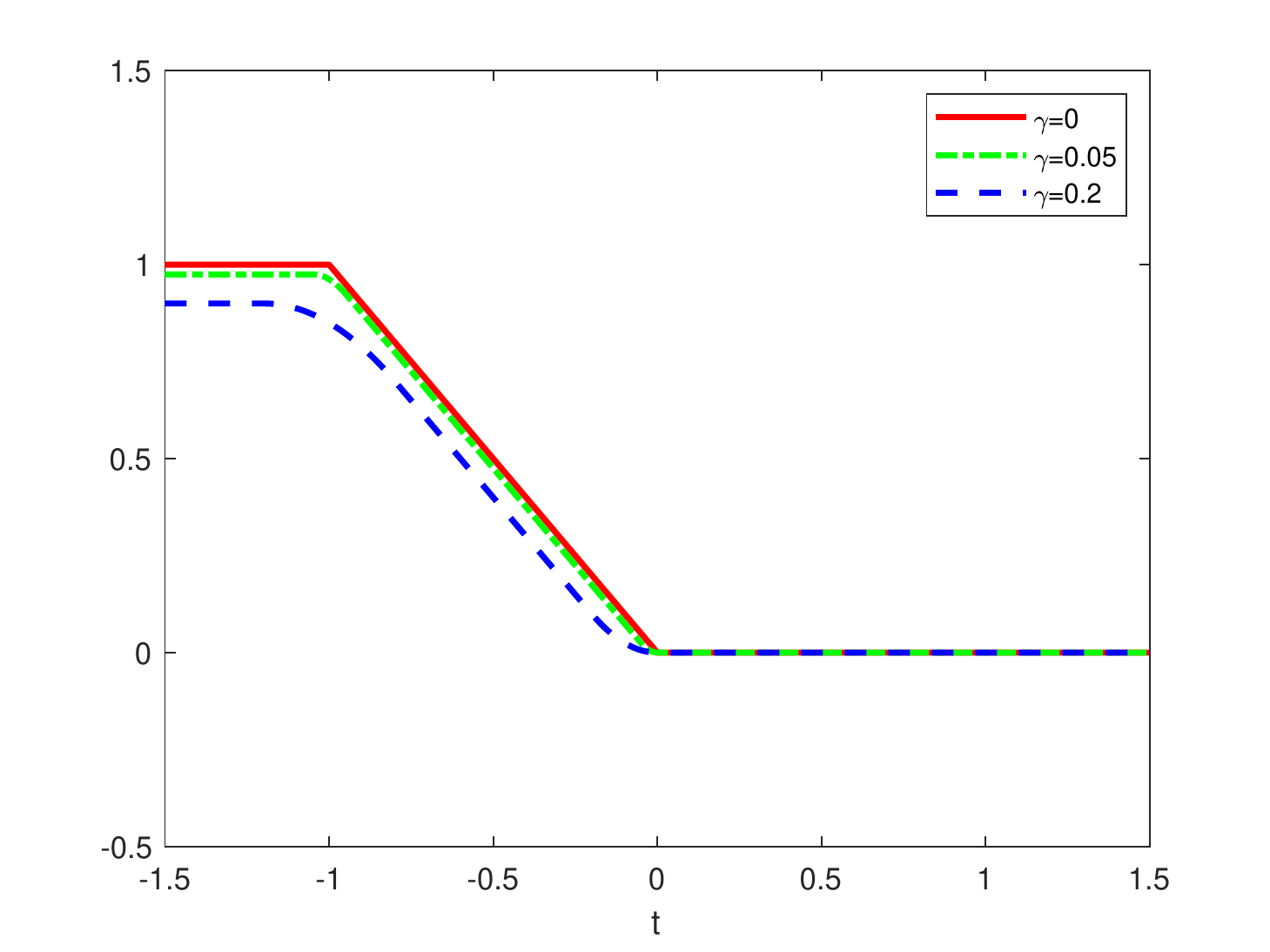}
   \caption{Approximation degree of $\vartheta_{\!\sigma,\gamma}$ with different $\gamma$ to $\vartheta_{\!\sigma}$ for $\sigma=1$}
  \label{fig_gam}
 \end{figure}
 \noindent

 Let $\mathscr{L}$ denote the family of proper lsc convex functions
 $\phi\!:\mathbb{R}\to(-\infty,+\infty]$ satisfying
 \begin{equation}\label{phi-assump}
   {\rm int}({\rm dom}\,\phi)\supseteq[0,1],\ 1>t^*\!:=\mathop{\arg\min}_{0\le t\le 1}\phi(t),\ \phi(t^*)=0
   \ \ {\rm and}\ \ \phi(1)=1.
 \end{equation}
 With an arbitrary $\phi\in\!\mathscr{L}$, the model \eqref{znorm-Moreau} is reformulated
 as a mathematical program with an equilibrium constraint (MPEC) in Section \ref{sec3},
 and by studying its global exact penalty induced by the equilibrium constraint,
 we derive a family of equivalent surrogates
 \begin{equation}\label{Esurrogate}
  \mathop{\min}_{x\in\mathbb{R}^n}G_{\sigma,\gamma,\rho}(x)
  :=L_{\sigma,\gamma}(Ax)+\delta_{\mathcal{S}}(x)+\lambda\rho\varphi_{\rho}(x),
 \end{equation}
 in the sense that the problem \eqref{Esurrogate} associated to every $\rho>\overline{\rho}$
 has the same global optimal solution set as the problem \eqref{znorm-Moreau} does.
 Here $\varphi_{\rho}(x)\!:=\!\|x\|_1-\!\frac{1}{\rho}\!\sum_{i=1}^n\psi^*(\rho|x_i|)$ with
 $\rho>0$ being the penalty parameter and $\psi^*$ being the conjugate function of $\psi$:
 \[
   \psi^*(\omega):=\sup_{t\in\mathbb{R}}\big\{\omega t-\psi(t)\big\}
   \ \ {\rm for}\ \psi(t)\!:=\!\left\{\begin{array}{cl}
                  \phi(t)&{\rm if}\ t\in[0,1],\\
                  +\infty &{\rm otherwise}.
             \end{array}\right.
 \]
 This family of equivalent surrogates is illustrated to include the one associated to
 the MCP function (see \cite{Zhang10,Zhu15,HuangY18}) and the SCAD function \cite{Fan01}.
 The SCAD function corresponds to $\phi(t)=\frac{a-1}{a+1}t^2+\frac{2}{a+1}t\ (a>1)$
 for $t\in\mathbb{R}$, whose conjugate has the form
 \begin{equation}\label{psi-star}
   \psi^*(\omega)=\begin{cases}
        0&  \omega\le \frac{2}{a+1},\\
        \frac{((a+1)\omega-2)^2}{4(a^2-1)}&  \frac{2}{a+1}<\omega\le\frac{2a}{a+1},\\
        \omega-1&   \omega>\frac{2a}{a+1},
  \end{cases}\quad{\rm for}\ \omega\in\mathbb{R}.
 \end{equation}
 Figure \ref{fig_rho} below shows that $G_{\sigma,\gamma,\rho}$ with $\psi^*$ in \eqref{psi-star}
 approximates $F_{\sigma,\gamma}$ every well for $\rho\ge 2$, though the model \eqref{Esurrogate}
 has the same global optimal solution set as the model \eqref{znorm-Moreau} does
 only when $\rho$ is over the theoretical threshold $\overline{\rho}$.
 Unless otherwise stated, the function $G_{\sigma,\gamma,\rho}$ appearing in the rest of
 this paper always represents the one associated to $\psi^*$ in \eqref{psi-star}.
 \begin{figure}[H]
   \centering
  \includegraphics[width=8cm,height=5cm]{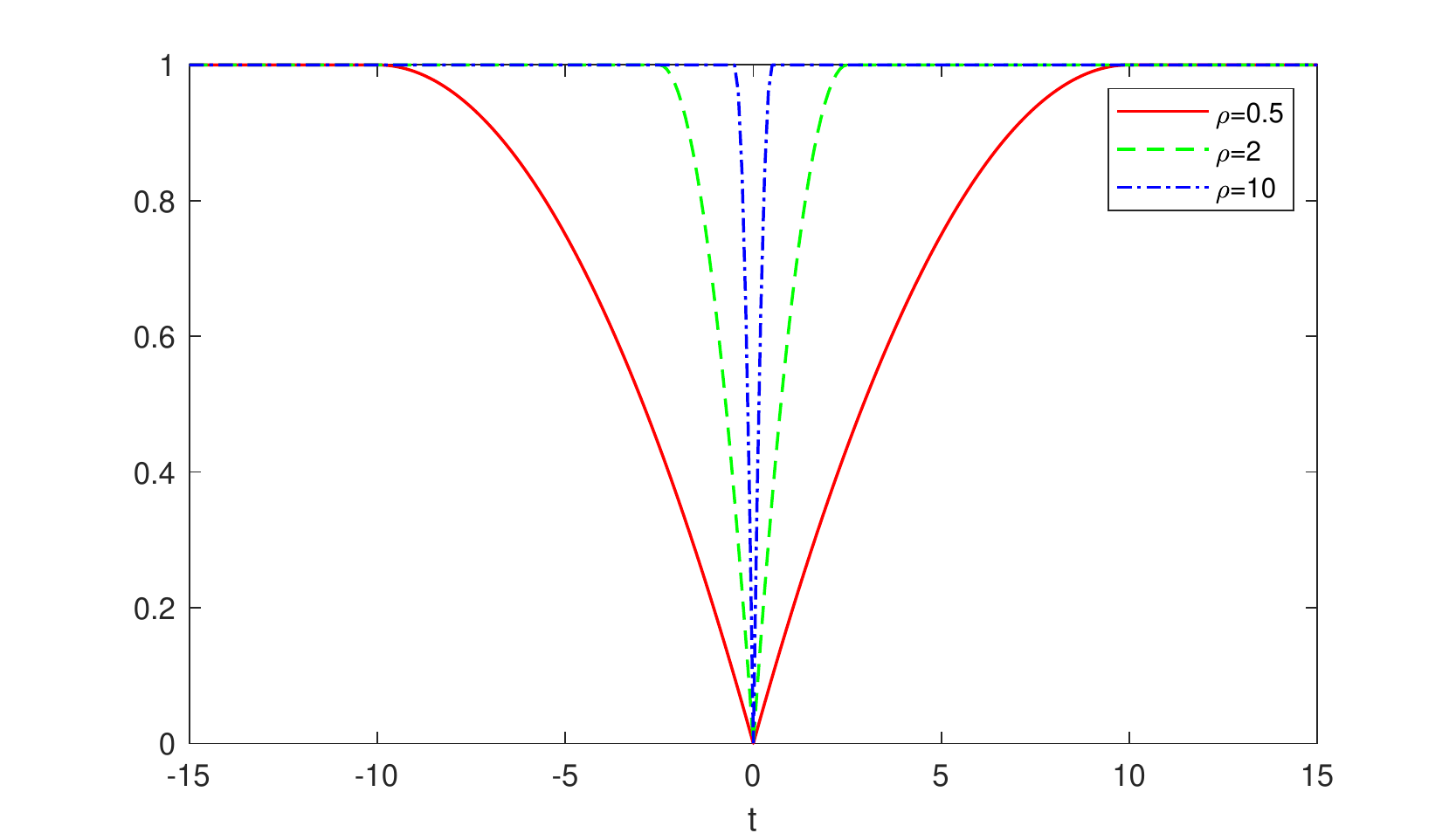}
   \caption{Approximation effect of $|t|-\rho^{-1}\psi^*(\rho|t|)$ with $\psi^*$ in \eqref{psi-star} to ${\rm sign}(|t|)$}
  \label{fig_rho}
 \end{figure}

 For the nonconvex nonsmooth optimization problems \eqref{znorm-Moreau} and \eqref{Esurrogate},
 we develop a proximal gradient (PG) method with extrapolation to solve them, establish
 the convergence of the whole iterate sequence generated, and analyze its local linear convergence rate
 under a mild condition. The main contributions of this paper can be summarized as follows:
 \begin{itemize}
  \item [{\bf (i)}] We introduce a smooth DC loss function well suited for the data with
                    a high sign flip ratio, and propose the zero-norm regularized smooth DC
                    loss model \eqref{znorm-Moreau} which, unlike those models in
                    \cite{Boufounos09,Dai16,Yan12,Zhou21}, does not require any priori information
                    on the sparsity of the true signal and the number of sign flips. In particular,
                    a family of equivalent nonconvex surrogates is derived for the model \eqref{znorm-Moreau}.
                    We also introduce a class of $\tau$-stationary points for the model \eqref{znorm-Moreau}
                   and its equivalent surrogate \eqref{Esurrogate} associated to $\psi^*$ in \eqref{psi-star},
                   which is stronger than the limiting critical points of the corresponding objective functions.

  \item[{\bf(ii)}] By characterizing the closed form of the proximal operators of
                   $\delta_{\mathcal{S}}(\cdot)+\lambda\|\cdot\|_0$ and $\delta_{\mathcal{S}}(\cdot)+\lambda\|\cdot\|_1$,
                   we develop a proximal gradient (PG) algorithm with extrapolation for
                   solving the problem \eqref{znorm-Moreau} (PGe-znorm)
                   and its surrogate \eqref{Esurrogate} associated to $\psi^*$ in \eqref{psi-star}
                   (PGe-scad) and establish the convergence of the whole iterate sequences.
                   Also, by analyzing the KL property of exponent $0$ of
                   $F_{\sigma,\gamma}$ and $G_{\sigma,\gamma,\rho}$,
                   the convergence is shown to have a linear rate under a mild condition.
                   It is worth pointing out that to verify if
                   a nonconvex and nonsmooth function has the KL property of exponent not more than $1/2$
                   is not an easy task because there is lack of a criterion for it.

 \item[{\bf(iii)}] Numerical experiments indicate that the proposed models armed with the PGe-znorm and PGe-scad
                  are robust to a large range of $\lambda$, and numerical comparisons with several state-of-art methods
                  demonstrate that the proposed models are well suited for high noise and/or high sign flip ratio.
                  The obtained solutions are remarkably superior to those yielded by other regularization models,
                  and for the data with a high flip ratio they are also superior to those yielded by the models
                  with the true sparsity as an input, in terms of MSE and Hamming error.
 \end{itemize}
 \section{Notation and preliminaries}\label{sec2}

  Throughout this paper, $\overline{\mathbb{R}}$ denotes the extended real number set
  $(-\infty,\infty]$, $I$ and $e$ denote an identity matrix and a vector of all ones,
  whose dimensions are known from the context; and $\{e^1,\ldots,e^n\}$ denotes 
  the orthonormal basis of $\mathbb{R}^n$. For a integer $k>0$, write $[k]:=\{1,2,\ldots,k\}$.
  For a vector $z\in\mathbb{R}^n$, $|z|_{\rm nz}$ denotes the smallest nonzero entry of the vector $|z|$,
  $z^{\downarrow}$ means the vector of the entries of $z$ arranged in a nonincreasing order,
  and $z^{s,\downarrow}$ means the vector $(z_1^{\downarrow},\ldots,z_s^{\downarrow})^{\mathbb{T}}$.
  For given index sets $I\subseteq[m]$ and $J\subseteq[n]$, $A_{I\!J}\in\mathbb{R}^{|I|\times |J|}$
 denotes the submatrix of $A_{J}$ consisting of those rows $A_i$ with $i\in I$, and
 $A_{J}\in\mathbb{R}^{m\times |J|}$ denotes the submatrix of $A$ consisting of those
 columns $A_j$ with $j\in\!J$. For a proper $h\!:\mathbb{R}^n\to\overline{\mathbb{R}}$,
 ${\rm dom}\,h\!:=\!\{z\in\mathbb{R}^n\,|\,h(z)<+\infty\}$ denotes its effective domain,
 and for any given $-\infty<\eta_1<\eta_2<\infty$, $[\eta_1<h<\eta_2]$ represents
 the set $\{x\in\mathbb{R}^n\,|\,\eta_1<h(x)<\eta_2\}$.
  For any $\lambda>0,\rho>0,0<\gamma<\sigma/2$ and any $x\in\mathbb{R}^n$,
  write $\Gamma(x)\!:=\!\{i\in[m]\,|\,-\!\gamma\le (Ax)_i\le0\}\cup
  \{i\in[m]\,|\,-\!\sigma\!-\!\gamma\le (Ax)_i\le\!\gamma\!-\!\sigma\}$ and define
  \begin{align}\label{fvphi}
   f_{\sigma,\gamma}(x):=L_{\sigma,\gamma}(Ax),\ \
   \Xi_{\sigma,\gamma}(x):=f_{\sigma,\gamma}(x)-\lambda{\textstyle\sum_{i=1}^n}\psi^*(\rho|x|_i),\\
   \label{ghlambda}
   g_{\lambda}(x):=\delta_{\mathcal{S}}(x)+\lambda\|x\|_0\ \ {\rm and}\ \
   h_{\lambda,\rho}(x):=\delta_{\mathcal{S}}(x)+\lambda\rho\|x\|_1.
  \end{align}
 For a proper lsc $h\!:\mathbb{R}^n\to\overline{\mathbb{R}}$,
  the proximal mapping of $h$ associated to $\tau>0$ is defined as
  \[
    \mathcal{P}_{\!\tau}h(x)\!:=\mathop{\arg\min}_{z\in\mathbb{R}^n}\big\{\frac{1}{2\tau}\|z-x\|^2+h(z)\big\}
    \quad\ \forall x\in\mathbb{R}^n.
  \]
  When $h$ is convex, $\mathcal{P}_{\!\tau}h$ is a Lipschitz continuous mapping
  with modulus $1$. When $h$ is an indicator function of a closed set $C\subseteq\mathbb{R}^n$,
  $\mathcal{P}_{\!\tau}h$ is the projection mapping $\Pi_{C}$ onto $C$.
 \subsection{Proximal mappings of $g_{\lambda}$ and $h_{\lambda,\rho}$}\label{sec2.1}

  To characterize the proximal mapping of the nonconvex nonsmooth function $g_{\lambda}$,
  we need the following lemma, whose proof is not included due to the simplicity.
 \begin{lemma}\label{lemma-glam}
  Fix any $z\in\mathbb{R}^n\backslash\{0\}$ and an integer $s\ge 1$. Consider the following problem
  \begin{equation}\label{znorm-prob1}
   S^*(z):=\mathop{\arg\min}_{x\in\mathbb{R}^n}\Big\{\frac{1}{2}\|x-z\|^2\ \ {\rm s.t.}\ \ \|x\|=1,\|x\|_0=s\Big\}.
  \end{equation}
  Then, $S^*(z)=\big\{\frac{P^{\mathbb{T}}(|z|^{s-1,\downarrow};|z|_i;0)}{\|(|z|^{s-1,\downarrow};|z|_i;0)\|}
  \,|\ i\in\{s,\ldots,n\}\ {\rm is\ such\ that}\ |z|_i=|z|_{s}^{\downarrow}\big\}$,
  where $P$ is an $n\times n$ signed permutation matrix such that $Pz=|z|^{\downarrow}$.
 \end{lemma}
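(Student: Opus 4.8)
The plan is to reduce the sphere-plus-sparsity-constrained projection to a finite selection problem over the support. First I would observe that, since $\|x\|=1$ and $\|x\|_0=s$ are both invariant under signed permutations, we may reduce to the case $z=|z|^{\downarrow}\ge 0$ by replacing $x$ with $Px$; here $P$ is the signed permutation matrix with $Pz=|z|^{\downarrow}$, and the objective $\frac12\|x-z\|^2$ transforms covariantly because $P$ is orthogonal. So it suffices to solve $\min\{\frac12\|x-z\|^2 : \|x\|=1,\ \|x\|_0=s\}$ for a fixed nonincreasing nonnegative $z$, and then conjugate the solution set back by $P^{\mathbb{T}}$.

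Next I would fix the support. For any index set $J\subseteq[n]$ with $|J|=s$, minimizing $\frac12\|x-z\|^2$ over $\{x : \mathrm{supp}(x)\subseteq J,\ \|x\|=1\}$ is just the projection of $z_J$ onto the unit sphere of $\mathbb{R}^{|J|}$, which (since we will only ever use $J$ with $z_J\ne 0$) equals $z_J/\|z_J\|$, with optimal value $\frac12(\|z\|^2 - 2\|z_J\| + 1)$. Hence among all supports of size $s$ the best ones are exactly those $J$ maximizing $\|z_J\|$, i.e. those collecting the $s$ largest-magnitude entries of $z$. Because $z$ is already sorted, one optimal choice is $J=\{1,\dots,s\}$, and more generally any $J$ consisting of $\{1,\dots,s-1\}$ together with one index $i\ge s$ attaining $z_i=z_s^{\downarrow}$ (ties at the $s$-th largest value) is optimal; every other support gives a strictly smaller $\|z_J\|$ hence a strictly larger objective. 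This yields exactly the set $\{(|z|^{s-1,\downarrow};|z|_i;0)/\|(|z|^{s-1,\downarrow};|z|_i;0)\|\}$ before conjugating back by $P^{\mathbb{T}}$.

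The one genuinely non-routine point is the handling of ties at the $s$-th order statistic: I must argue that the argmin set is \emph{exactly} the list indexed by $\{i\ge s : |z|_i=|z|_s^{\downarrow}\}$ and contains nothing else. This needs two directions: (a) every such $i$ does give a minimizer — clear, since all these supports achieve the maximal $\|z_J\|$; and (b) no support $J$ that is not of this form can be optimal — here I would note that if $J$ omits some index with magnitude strictly larger than $z_s^{\downarrow}$, or includes one with magnitude strictly smaller, then $\|z_J\|$ strictly decreases, and if $\|z_J\|$ is maximal but $J$ mixes two distinct indices at the tie value then the resulting normalized vector coincides (after accounting for $P$) with one already on the list, so no new points appear. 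A mild but necessary caveat, already built into the hypothesis $z\ne 0$ and $s\ge 1$: the chosen $z_J$ is nonzero, so $z_J/\|z_J\|$ is well defined; if $s$ exceeds $\|z\|_0$ some entries of $|z|^{s-1,\downarrow}$ or $|z|_i$ vanish, which is harmless as long as not all of them do. Finally I would undo the reduction: if $x^\star$ solves the sorted problem then $P^{\mathbb{T}}x^\star$ solves the original one, which gives the stated formula. This is the "simple" argument the authors omit, and the only care needed is the exhaustiveness of the tie enumeration.
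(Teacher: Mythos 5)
The paper does not actually prove this lemma (it is stated with the remark that the proof ``is not included due to the simplicity''), so there is no official argument to compare against; your proof supplies exactly the standard argument the authors surely intend: reduce by the signed permutation $P$ to a sorted nonnegative $z$, observe that for a fixed support $J$ with $|J|=s$ the objective decouples as $\frac{1}{2}\|x_J-z_J\|^2+\frac{1}{2}\|z_{J^c}\|^2$ so the inner minimizer is $z_J/\|z_J\|$ with value $\frac{1}{2}(1+\|z\|^2)-\|z_J\|$, and then maximize $\|z_J\|$ over supports. That reduction, the formula for the optimal value, and the identification of the optimal supports with those collecting $s$ largest-magnitude entries are all correct and are all that the paper actually uses of this lemma (namely $\overline{\chi}_s(y)=\frac{1}{2}(1+\|y\|^2-2\|y^{s,\downarrow}\|)+\nu s$ in Proposition 2.1).

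The one place where your argument is asserted rather than proved is the claim that a maximal support which ``mixes two distinct indices at the tie value'' yields a normalized vector already on the list. For a \emph{fixed} $P$ this is false: if the $s$-th largest magnitude is also attained at some sorted position $j<s$, say $|z|^{\downarrow}=(3,2,2,2,1)$ with $s=3$, then the support consisting of the original positions of sorted entries $\{1,3,4\}$ is optimal (it achieves the same $\|z_J\|^2=17$) but has a different support from both $\{1,2,3\}$ and $\{1,2,4\}$, so it is a genuinely new minimizer not of the form $P^{\mathbb{T}}(|z|^{s-1,\downarrow};|z|_i;0)/\|\cdot\|$ for that $P$. This is really an imprecision in the lemma's own formulation — the displayed set is exhaustive only if one also ranges over all signed permutations $P$ sorting $|z|$, or if the top $s-1$ magnitudes are attained at a unique index set — rather than an error you introduced, and it is immaterial to how the lemma is used downstream (only the optimal value and one representative minimizer matter there). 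If you want a fully airtight statement, characterize $S^*(z)$ as $\{z_J/\|z_J\|$ extended by zero $:J$ maximizes $\|z_J\|$ over $|J|=s\}$ and note that the paper's formula enumerates these up to the choice of sorting permutation.
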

 \begin{proposition}\label{proxm-glam}
  Fix any $\lambda>0$ and $\tau>0$. For any $z\in\mathbb{R}^n$, by letting $P$ be an $n\times n$
  signed permutation matrix such that $Pz=|z|^{\downarrow}$, it holds that
  $\mathcal{P}_{\!\tau}g_{\lambda}(z)=P^{\mathbb{T}}\mathcal{Q}_{\tau\lambda}(|z|^{\downarrow})$ with
  \begin{equation}\label{prox-Eprob}
   \mathcal{Q}_{\nu}(y):=\mathop{\arg\min}_{x\in\mathbb{R}^n}\Big\{\frac{1}{2}\|x-y\|^2+\nu\|x\|_0\ \ {\rm s.t.}\ \|x\|=1\Big\}
   \quad\forall y\in\mathbb{R}^n.
  \end{equation}
  For any $y\ne 0$ with $y_1\ge\cdots\ge y_n\ge 0$,
  by letting $\chi_{j}(y)\!:=\!\|y^{j,\downarrow}\|-\!\|y^{j-1,\downarrow}\|$ with
  $y^{0,\downarrow}=0$, $\mathcal{Q}_{\nu}(y)\!=\!\{\frac{y}{\|y\|}\}$ if $\nu\le\!\chi_n(y)$;
  $\mathcal{Q}_{\nu}(y)\!=\!\big\{(\frac{y_i}{|y_i|},0,\ldots,0)^{\mathbb{T}}\,|\,i\in[n]\ {\rm is\ such\ that}\
  y_i=y_1\big\}$ if $\nu\ge\!\chi_1(y)$; otherwise
  $\mathcal{Q}_{\nu}(y)\!:=\!\big\{(\frac{y^{l,\downarrow}}{\|y^{l,\downarrow}\|};0)\ |\ l\in[n]
  \ {\rm is\ such\ that}\ \nu\in(\chi_{l+1}(y),\chi_{l}(y)]\big\}$.
 \end{proposition}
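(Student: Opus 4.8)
The plan is to prove the two assertions in turn: first the identity $\mathcal{P}_{\tau}g_{\lambda}(z)=P^{\mathbb{T}}\mathcal{Q}_{\tau\lambda}(|z|^{\downarrow})$ by an orthogonal-invariance reduction, then the closed form of $\mathcal{Q}_{\nu}$ by splitting over the support size and carrying out a one-dimensional threshold analysis built on the concavity of $s\mapsto\|y^{s,\downarrow}\|$. For Step~1, note that both $\delta_{\mathcal{S}}$ and $\|\cdot\|_{0}$ are invariant under every $n\times n$ signed permutation matrix $Q$, so $g_{\lambda}(Qx)=g_{\lambda}(x)$; combined with $\|Qx-Qz\|=\|x-z\|$, the substitution $x\mapsto Qx$ in the defining minimization yields $\mathcal{P}_{\tau}g_{\lambda}(Qz)=Q\,\mathcal{P}_{\tau}g_{\lambda}(z)$. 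Taking $Q=P$ with $Pz=|z|^{\downarrow}$ and using $P^{-1}=P^{\mathbb{T}}$ gives $\mathcal{P}_{\tau}g_{\lambda}(z)=P^{\mathbb{T}}\mathcal{P}_{\tau}g_{\lambda}(|z|^{\downarrow})$. Since multiplying the objective by $\tau>0$ and using $\tau\delta_{\mathcal{S}}=\delta_{\mathcal{S}}$ shows $\mathcal{P}_{\tau}g_{\lambda}(y)=\mathop{\arg\min}_{\|x\|=1}\{\tfrac12\|x-y\|^{2}+\tau\lambda\|x\|_{0}\}=\mathcal{Q}_{\tau\lambda}(y)$, the first claim follows. (If $z=0$ then $\mathcal{Q}_{\tau\lambda}(0)$ is the set of signed standard basis vectors; below take $z\ne0$, hence $y\ne0$.)

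\emph{Step 2 (fixing the support size).} For $y$ with $y_{1}\ge\cdots\ge y_{n}\ge0$ and $x\in\mathcal{S}$ we have $\tfrac12\|x-y\|^{2}=\tfrac{1+\|y\|^{2}}{2}-\langle x,y\rangle$, so $\mathcal{Q}_{\nu}(y)$ is the set of minimizers, over $s\in[n]$ and over $x$ with $\|x\|=1,\|x\|_{0}=s$, of $\nu s-\langle x,y\rangle$. For each fixed $s$, maximizing $\langle x,y\rangle$ (equivalently minimizing $\tfrac12\|x-y\|^{2}$) over $\{\|x\|=1,\|x\|_{0}=s\}$ is exactly Lemma~\ref{lemma-glam} with $z=y$ (and $P=I$, since $y$ is already sorted and nonnegative): the optimal value is $\theta(s):=\|y^{s,\downarrow}\|$, attained on the set $S^{*}_{s}(y)$ of unit vectors supported on a top-$s$ index set with entries proportional to those of $y$ --- the singleton $\{(y^{s,\downarrow}/\|y^{s,\downarrow}\|;0)\}$ when $y_{s}>y_{s+1}$, and the finitely many tie variants of Lemma~\ref{lemma-glam} otherwise. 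Hence $\mathcal{Q}_{\nu}(y)=\bigcup_{s\in S^{\star}}S^{*}_{s}(y)$ with $S^{\star}:=\mathop{\arg\max}_{1\le s\le n}\{\theta(s)-\nu s\}$, and (since $\theta(s)=\theta(\|y\|_{0})$ while $-\nu s$ strictly decreases for $s>\|y\|_{0}$) automatically $S^{\star}\subseteq\{1,\dots,\|y\|_{0}\}$; it remains to compute $S^{\star}$.

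\emph{Step 3 (threshold analysis).} From $\theta(s)^{2}-\theta(s-1)^{2}=y_{s}^{2}$ and $\theta(1)=y_{1}>0$ one gets, for $s\ge1$, $\chi_{s}(y)=\theta(s)-\theta(s-1)=y_{s}^{2}/(\theta(s)+\theta(s-1))$, which is nonincreasing in $s$ because its numerator is nonincreasing and its denominator nondecreasing. Thus $s\mapsto\theta(s)-\nu s$ is concave on $\{0,1,\dots,n\}$ with increments $\chi_{s}(y)-\nu$, and its maximizer over $\{1,\dots,n\}$ is governed by where $\chi_{s}(y)$ crosses $\nu$: if $\nu\le\chi_{n}(y)$ every increment is $\ge\nu$ so $S^{\star}=\{n\}$, and here $\chi_{n}(y)>0$ forces $y_{n}>0$, whence $S^{*}_{n}(y)=\{y/\|y\|\}$; if $\nu\ge\chi_{1}(y)=y_{1}$ the first increment is $\le0$ and the later ones no larger, so $S^{\star}=\{1\}$ with $S^{*}_{1}(y)$ the $1$-sparse unit vectors on a coordinate attaining $y_{1}$; otherwise $\chi_{n}(y)<\nu<\chi_{1}(y)$ and, by monotonicity, there is a unique $l\in\{1,\dots,n-1\}$ with $\nu\in(\chi_{l+1}(y),\chi_{l}(y)]$, giving $S^{\star}=\{l\}$ and $S^{*}_{l}(y)=\{(y^{l,\downarrow}/\|y^{l,\downarrow}\|;0)\}$ up to ties. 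Substituting into $\mathcal{Q}_{\nu}(y)=\bigcup_{s\in S^{\star}}S^{*}_{s}(y)$ yields the three displayed formulas.

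\emph{Main obstacle.} Steps~1 and~2 are routine once Lemma~\ref{lemma-glam} is in hand; the real work is Step~3 --- establishing the monotonicity of the increments $\chi_{s}(y)$ (hence concavity of $\theta(s)-\nu s$ in $s$) and then pinning down $S^{\star}$ together with its induced maximizer set across all regimes, taking care of the boundary values $\nu=\chi_{l}(y)$ and of ties among the entries of $y$, which enlarge the sets $S^{*}_{s}(y)$ and interact with the freedom in the choice of $P$ entering the first claim.
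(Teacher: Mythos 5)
Your proof is correct and follows essentially the same route as the paper's: reduce by signed-permutation equivariance and absorption of $\tau$ into $\lambda$, decompose by support size via Lemma \ref{lemma-glam} so the problem becomes optimizing $\|y^{s,\downarrow}\|-\nu s$ over $s$, and use the monotonicity of the increments $\chi_s(y)=y_s^2/(\|y^{s,\downarrow}\|+\|y^{s-1,\downarrow}\|)$ to locate the optimal $s$ in each of the three regimes. You in fact spell out the two regimes the paper dispatches with ``similar arguments,'' and your remark about boundary values $\nu=\chi_l(y)$ correctly identifies a degeneracy that the proposition's statement (and the paper's proof) glosses over.
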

 \begin{proof}
  By the definition of $g_{\lambda}$, for any $z\in\mathbb{R}^n$,
  $\mathcal{P}_{\!\tau}g_{\lambda}(z)=\mathcal{Q}_{\tau\lambda}(z)$.
  Since for any $n\times n$ signed permutation matrix $Q$ and any $z\in\mathbb{R}^n$,
  $\|Qz\|=\|z\|$ and $\|Qz\|_0=\|z\|_0$, it is easy to verify that
  $\mathcal{Q}_{\nu}(z)=P^{\mathbb{T}}\mathcal{Q}_{\nu}(|z|^{\downarrow})$.
  The first part of the conclusions then follows. For the second part,
  we first argue that the following inequality relations hold:
  \begin{equation}\label{vtheta-equa}
   \chi_1(y)\ge\chi_2(y)\ge\cdots\ge \chi_n(y).
  \end{equation}
  Indeed, for each $j\in\{1,2,\ldots,n\!-\!1\}$, from the definition of $y^{j}$,
  it is immediate to have
  \begin{align*}
    \|y^{j}\|^2\!-\!\|y^{j-1}\|^2
    =y_{j}^2\ge y_{j+1}^2 =\|y^{j+1}\|^2\!-\!\|y^{j}\|^2\ \ {\rm and}\ \
   \|y^{j}\|\!+\!\|y^{j-1}\|\le\|y^{j+1}\|\!+\!\|y^{j}\|.
  \end{align*}
  Along with $\chi_{j}(y)=\frac{\|y^{j}\|^2-\|y^{j-1}\|^2}{\|y^{j}\|+\|y^{j-1}\|}$,
  we get $\chi_{j}(y)\ge\chi_{j+1}(y)$ and the relations in \eqref{vtheta-equa} hold.
  Let $\upsilon^*(y)$ denote the optimal value of \eqref{prox-Eprob}.
  Then $\upsilon^*(y)=\min\{\overline{\chi}_{1}(y),\ldots,\overline{\chi}_{n}(y)\}$ with
  \begin{equation}\label{prox-opt}
  \overline{\chi}_{s}(y):=\min_{x\in \mathbb{R}^n}
  \Big\{\frac{1}{2}\|x-y\|^2+\nu\|x\|_0\ \ {\rm s.t.}\ \ \|x\|_0=s,\|x\|=1\Big\}
  \ \ {\rm for}\ s=1,\ldots,n.
  \end{equation}
  From Lemma \ref{lemma-glam}, it follows that
  $\overline{\chi}_{s}(y)=\frac{1}{2}(1+\|y\|^2-2\|y^{s,\downarrow}\|)+\nu s$. Then,
  \[
   \Delta\overline{\chi}_{s}(y):=\overline{\chi}_{s+1}(y)-\overline{\chi}_{s}(y)
    =\|y^{s,\downarrow}\|-\|y^{s+1,\downarrow}\|+\nu=-\chi_{s}(y)+\nu.
  \]
  When $\nu\le\chi_n(y)$, we have $\nu\le\chi_{s}(y)$ for all $s=1,\ldots,n$.
  From the last equation, $\Delta\overline{\chi}_{s}(y)\le 0$ for
  $s=1,\ldots,n-\!1$, which means that
  \(
    \overline{\chi}_1(y)\ge\overline{\chi}_2(y)\ge\cdots\ge\overline{\chi}_n(y).
  \)
  Hence, $\upsilon^*(y)=\overline{\chi}_n(y)$, and $\mathcal{Q}_{\nu}(y)=\{\frac{y}{\|y\|}\}$
  follows by Lemma \ref{lemma-glam}. Using the similar arguments,
  we can obtain the rest of the conclusions.
 \end{proof}

  To characterize the proximal mapping of the nonconvex nonsmooth function $h_{\lambda,\rho}$,
  we need the following lemma, whose proof is omitted due to the simplicity.
 \begin{lemma}\label{MP-lemma}
  Let $\mathcal{S}_{+}:=\mathcal{S}\cap\mathbb{R}_{+}^n$. For any $z\in\!\mathbb{R}^n$,
  by letting $P$ be an $n\times n$ permutation matrix
  such that $Pz=z^{\downarrow}$, it holds that $\Pi_{\mathcal{S}_{+}}(z)
  =P^{\mathbb{T}}\Pi_{\mathcal{S}_{+}}(z^{\downarrow})$. Also, for any $y\!\in\mathbb{R}^n$
  with $y_1\ge\cdots\ge y_n$, $\Pi_{\mathcal{S}_{+}}(y)=\big\{e_i\,|\,i\in[n]\ {\rm is\ such\ that}\ y_i=y_1\big\}$
  if $y_1\le 0$; $\Pi_{\mathcal{S}_{+}}(y)=\big\{\frac{y}{\|y\|}\big\}$ if $y_n\ge 0$,
  otherwise $\Pi_{\mathcal{S}_{+}}(y)=\big\{\frac{(y_1,\ldots,y_j,0,\ldots,0)^{\mathbb{T}}}
  {\|(y_1,\ldots,y_j,0,\ldots,0)^{\mathbb{T}}\|}\,|\, j\in[n\!-\!1]\ {\rm is\ such\ that}\ y_j>0\ge y_{j+1}\big\}$.
 \end{lemma}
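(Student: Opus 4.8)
The plan is to convert the projection into a linear maximization over $\mathcal{S}_{+}$ and then to solve it by a sign-based case analysis on the sorted vector, much as in the proof of Proposition \ref{proxm-glam}. First I would note that for $x\in\mathcal{S}$ one has $\frac12\|x-z\|^2=\frac12(1+\|z\|^2)-\langle x,z\rangle$, so that $\Pi_{\mathcal{S}_{+}}(z)=\mathop{\arg\max}\{\langle x,z\rangle:x\in\mathcal{S}_{+}\}$. Since $\mathcal{S}_{+}$ is invariant under coordinate permutations and the Euclidean norm (equivalently, the inner product) is permutation-invariant, for any permutation matrix $P$ we have $x^*\in\Pi_{\mathcal{S}_{+}}(z)$ iff $Px^*\in\Pi_{\mathcal{S}_{+}}(Pz)$; choosing $P$ with $Pz=z^{\downarrow}$ and using $P^{-1}=P^{\mathbb{T}}$ gives the first assertion. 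It then remains to compute $\mathop{\arg\max}\{\langle x,y\rangle:x\in\mathcal{S}_{+}\}$ for $y$ with $y_1\ge\cdots\ge y_n$.

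For such a $y$ I would distinguish three cases. If $y_1\le 0$, then for any feasible $x$ one has $\langle x,y\rangle\le y_1\sum_i x_i\le y_1$, using $y_i\le y_1\le0$, $x_i\ge0$ and $\sum_i x_i\ge\|x\|=1$; equality forces $x$ to be a unit vector supported on $\{i:y_i=y_1\}$, i.e. $x=e_i$ with $y_i=y_1$. If $y_n\ge0$, the constraint $x\ge0$ is inactive and Cauchy--Schwarz gives $\langle x,y\rangle\le\|y\|$ with equality only at $x=y/\|y\|$, which is nonnegative and hence feasible; so $\Pi_{\mathcal{S}_{+}}(y)=\{y/\|y\|\}$.

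In the remaining case there is a unique $j\in[n-1]$ with $y_j>0\ge y_{j+1}$. Let $x$ be any maximizer. Since $\langle x,y\rangle\ge\langle e_1,y\rangle=y_1>0$ and $\sum_{i>j}x_iy_i\le0$, we get $\sum_{i\le j}x_iy_i\ge\langle x,y\rangle>0$, so $x_{1:j}:=(x_1,\ldots,x_j)\ne0$; let $\widetilde x$ be the feasible point obtained by zeroing out the entries of $x$ with index $>j$ and renormalizing. Then $\langle x,y\rangle\le\sum_{i\le j}x_iy_i\le\|x_{1:j}\|^{-1}\sum_{i\le j}x_iy_i=\langle\widetilde x,y\rangle\le\langle x,y\rangle$, so all the inequalities are equalities: in particular $\|x_{1:j}\|=1$, which together with $\|x\|=1$ forces $x_i=0$ for $i>j$. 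Restricting to the coordinates in $[j]$, where $y_{1:j}>0$, the second case shows the maximizer of $\langle x_{1:j},y_{1:j}\rangle$ over the unit sphere is unique and equals $y_{1:j}/\|y_{1:j}\|$; hence $x=(y_1,\ldots,y_j,0,\ldots,0)^{\mathbb{T}}/\|(y_1,\ldots,y_j,0,\ldots,0)^{\mathbb{T}}\|$, which is the claimed formula.

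The routine parts are the first two cases; the step needing a little care is the truncation argument in the third case -- showing that at optimality all coordinates with $y_i\le0$ must vanish -- together with the uniqueness bookkeeping, which has to accommodate possible zero entries among $y_{j+1},\ldots,y_n$ and the strictness of Cauchy--Schwarz on $[j]$. (The only genuinely degenerate instance, $z=0$, plays no role for the algorithm, which merely needs a single element of the projection set.)
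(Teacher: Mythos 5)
Your proof is correct, and since the paper omits the proof of this lemma entirely (``due to the simplicity''), there is nothing to compare against; your argument---reducing the projection to maximizing $\langle x,y\rangle$ over $\mathcal{S}_{+}$ and then doing the three-way sign case analysis, with the truncation-and-renormalization chain of equalities handling the mixed-sign case---is the natural one and is complete. The only caveat is the degenerate instance $y_1=0$ in the first case: there, equality in $y_1\sum_i x_i\le y_1$ no longer forces $\sum_i x_i=1$ (which is what pins $x$ down to a coordinate vector via $\|x\|_1=\|x\|_2$), and indeed any nonnegative unit vector supported on $\{i\,|\,y_i=0\}$ is a projection, so the lemma's formula $\{e_i\,|\,y_i=y_1\}$ is itself incomplete in that case; your ``i.e.\ $x=e_i$'' inherits this imprecision but is airtight whenever $y_1<0$. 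This edge case is immaterial for the algorithm, which only needs one element of the projection set, but it is worth flagging since your equality bookkeeping otherwise silently assumes $y_1<0$.
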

 \begin{proposition}\label{proxm-hlam}
  Fix any $\lambda>0,\rho>0$ and $\tau>0$. For any $z\in\mathbb{R}^n$, by letting $P$
  be an $n\times n$ signed permutation matrix with $Pz=|z|^{\downarrow}$,
  $\mathcal{P}_{\!\tau}h_{\lambda,\rho}(z)=P^{\mathbb{T}}\Pi_{\mathcal{S}_{+}}(|z|^{\downarrow}\!-\!\tau\lambda\rho e)$.
 \end{proposition}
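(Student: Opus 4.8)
The plan is to reduce, exactly as in the proof of Proposition~\ref{proxm-glam}, the computation to the case of a sorted nonnegative vector, and then to recognize the reduced subproblem as a projection onto $\mathcal{S}_{+}$. First I would use the invariance of $h_{\lambda,\rho}$ under signed permutations: recalling from \eqref{ghlambda} that $h_{\lambda,\rho}=\delta_{\mathcal{S}}+\lambda\rho\|\cdot\|_1$, for any $n\times n$ signed permutation matrix $Q$ we have $\|Qx\|=\|x\|$ and $\|Qx\|_1=\|x\|_1$, hence $h_{\lambda,\rho}(Qx)=h_{\lambda,\rho}(x)$. Substituting $x=P^{\mathbb{T}}u$ into the definition of $\mathcal{P}_{\!\tau}h_{\lambda,\rho}(z)$ and using the orthogonality of $P$ then yields $\mathcal{P}_{\!\tau}h_{\lambda,\rho}(z)=P^{\mathbb{T}}\mathcal{P}_{\!\tau}h_{\lambda,\rho}(Pz)=P^{\mathbb{T}}\mathcal{P}_{\!\tau}h_{\lambda,\rho}(|z|^{\downarrow})$, so it suffices to prove $\mathcal{P}_{\!\tau}h_{\lambda,\rho}(y)=\Pi_{\mathcal{S}_{+}}(y-\tau\lambda\rho\,e)$ for every $y=|z|^{\downarrow}$, i.e. for every $y\in\mathbb{R}_{+}^n$ with nonincreasing entries.

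Second, multiplying the objective in the definition of $\mathcal{P}_{\!\tau}h_{\lambda,\rho}(y)$ by $\tau$, the subproblem becomes $\min\{\frac12\|x-y\|^2+\tau\lambda\rho\|x\|_1 : \|x\|=1\}$. Since $y\ge 0$, replacing a feasible $x$ by its componentwise absolute value keeps it in $\mathcal{S}$, preserves $\|x\|_1$, and does not increase $\frac12\|x-y\|^2$ because $|x_i|y_i\ge x_iy_i$ for each $i$, the inequality being strict whenever $x_i<0$ and $y_i>0$; hence the minimum over $\mathcal{S}$ equals the minimum over $\mathcal{S}_{+}$, and any minimizer over $\mathcal{S}$ can differ in sign from its absolute value only at indices $i$ with $y_i=0$. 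On $\mathcal{S}_{+}$ one has $\|x\|=1$ and $\|x\|_1=\langle e,x\rangle$, so $\frac12\|x-y\|^2+\tau\lambda\rho\|x\|_1=\frac12-\langle x,\,y-\tau\lambda\rho e\rangle+\frac12\|y\|^2=\frac12\|x-(y-\tau\lambda\rho e)\|^2+\text{const}$, and therefore the minimizer set over $\mathcal{S}_{+}$ is precisely $\Pi_{\mathcal{S}_{+}}(y-\tau\lambda\rho e)$, the closed form of which is the one recorded in Lemma~\ref{MP-lemma}.

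Finally I would glue the two steps together: by the formula in Lemma~\ref{MP-lemma}, a coordinate of a point of $\Pi_{\mathcal{S}_{+}}(y-\tau\lambda\rho e)$ can be positive only where $y_i>0$ (save for the degenerate all-zero $y$), so any minimizer over $\mathcal{S}$ must vanish on the zero coordinates of $y$ and hence already lie in $\mathcal{S}_{+}$; this gives $\mathcal{P}_{\!\tau}h_{\lambda,\rho}(y)=\Pi_{\mathcal{S}_{+}}(y-\tau\lambda\rho e)$ and, after applying $P^{\mathbb{T}}$, the claimed identity. I expect the only real (and minor) obstacle to be precisely this bookkeeping around ties and zero coordinates of $y$; the substantive part — the change of variables via the signed permutation and the completion of squares that turns the sphere-restricted problem into a projection — is routine.
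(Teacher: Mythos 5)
Your proposal is correct and follows essentially the same route as the paper's proof: reduce to a sorted nonnegative vector via the signed permutation, show the minimizer over $\mathcal{S}$ may be taken in $\mathcal{S}_{+}$ by a sign-flip argument, and then use $\|x\|_1=\langle e,x\rangle$ on $\mathcal{S}_{+}$ to complete the square and identify the solution set with $\Pi_{\mathcal{S}_{+}}(y-\tau\lambda\rho e)$. Your extra bookkeeping around zero coordinates of $y$ (where the sign-flip inequality is not strict) is in fact slightly more careful than the paper's argument, which dismisses that case implicitly.
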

 \begin{proof}
  Fix any $\xi\in\mathbb{R}^n$ with $\xi_1\ge \xi_2\ge\cdots\ge \xi_n\ge 0$.
  Consider the following problem
  \begin{equation}\label{MQ-map2}
  \mathcal{P}_{\nu}(\xi):=\mathop{\arg\min}_{x\in\mathbb{R}^n}
  \Big\{\frac{1}{2}\big\|x-\xi\big\|^2+\nu\|x\|_1\ \ {\rm s.t.}\ \|x\|=1\Big\}
  \end{equation}
  where $\nu>0$ is a regularization parameter. By the definition of $h_{\lambda,\rho}$,
  $\mathcal{P}_{\!\tau}h_{\lambda,\rho}(z)=\mathcal{P}_{\!\tau\lambda\rho}(z)$,
  so it suffices to argue that $\mathcal{P}_{\!\nu}(\xi)=\Pi_{\mathcal{S}_{+}}(\xi-\nu e)$.
  Indeed, if $x^*$ is a global optimal solution of \eqref{MQ-map2},
  then $x^*\ge 0$ necessarily holds. If not, we will have $J:=\{j\,|\,x_j^*<0\}\ne\emptyset$.
  Let $\overline{J}=\{1,\ldots,n\}\backslash J$. Take $\widetilde{x}_i^*=x_i^*$
  for each $i\in\overline{J}$ and $\widetilde{x}_i^*=-x_i^*$ for each $i\in J$.
  Clearly, $\widetilde{x}^*\ge 0$ and $\|\widetilde{x}^*\|=1$. However, it holds that
  $\frac{1}{2}\big\|\widetilde{x}^*-\xi\big\|^2+\nu\|\widetilde{x}^*\|_1
    \le\frac{1}{2}\big\|x^*-\xi\big\|^2+\nu\|x^*\|_1$,
  which contradicts the fact that $x^*$ is a global optimal solution of \eqref{MQ-map2}.
  This implies that
  \(
    \mathcal{P}_{\nu}(\xi)=\mathop{\arg\min}_{x\in\mathbb{R}^n}
    \big\{\frac{1}{2}\big\|x-\xi\big\|^2+\nu\langle e,x\rangle\ \ {\rm s.t.}\ x\ge 0,\|x\|=1\big\}.
  \)
  Consequently, $\mathcal{P}_{\nu}(\xi)=\Pi_{\mathcal{S}_{+}}(\xi-\nu e)$.
  The desired equality then follows.
 \end{proof}
 \subsection{Generalized subdifferentials}\label{sec2.2}
 \begin{definition}\label{Gsubdiff-def}(see \cite[Definition 8.3]{RW98})\
  Consider a function $h\!:\mathbb{R}^n\to\overline{\mathbb{R}}$ and a point $x\in{\rm dom}h$.
  The regular subdifferential of $h$ at $x$, denoted by $\widehat{\partial}h(x)$, is defined as
  \[
    \widehat{\partial}h(x):=\bigg\{v\in\mathbb{R}^n\ \big|\
    \liminf_{x'\to x\atop x'\ne x}\frac{h(x')-h(x)-\langle v,x'-x\rangle}{\|x'-x\|}\ge 0\bigg\};
  \]
  and the (limiting) subdifferential of $h$ at $x$, denoted by $\partial h(x)$, is defined as
  \[
    \partial h(x):=\Big\{v\in\mathbb{R}^n\,|\,\exists\,x^k\to x\ {\rm with}\ h(x^k)\to h(x)\ {\rm and}\
    v^k\in\widehat{\partial}h(x^k)\to v\ {\rm as}\ k\to\infty\Big\}.
  \]
 \end{definition}
 \begin{remark}\label{remark-Fsubdiff}
  {\bf(i)} At each $x\in{\rm dom}h$, $\widehat{\partial}h(x)\subseteq\partial h(x)$,
  $\widehat{\partial}h(x)$ is always closed and convex, and $\partial h(x)$ is closed
  but generally nonconvex.
  When $h$ is convex, $\widehat{\partial}h(x)=\partial h(x)$, which is precisely
  the subdifferential of $h$ at $x$ in the sense of convex analysis.

  \noindent
  {\bf(ii)} Let $\{(x^k,v^k)\}_{k\in\mathbb{N}}$ be a sequence in the graph of
  $\partial h$ that converges to $(x,v)$ as $k\to\infty$.
  By invoking Definition \ref{Gsubdiff-def}, if $h(x^k)\to h(x)$ as $k\to\infty$,
  then $v\in\partial h(x)$.

  \noindent
  {\bf(iii)} A point $\overline{x}$ at which $0\in\partial h(\overline{x})$
  ($0\in\widehat{\partial}h(\overline{x})$) is called  a limiting (regular)
  critical point of $h$. In the sequel, we denote by ${\rm crit}\,h$
  the limiting critical point set of $h$.
 \end{remark}

 When $h$ is an indicator function of a closed set $C$,
 the subdifferential of $h$ at $x\in C$ is the normal cone to $C$ at $x$,
 denoted by $\mathcal{N}_{C}(x)$. The following lemma characterizes the (regular)
 subdifferentials of $F_{\sigma,\gamma}$ and $G_{\sigma,\gamma,\rho}$ at any point
 of their domains.
 \begin{lemma}\label{lemma-critical}
  Fix any $\lambda>0,\rho>0$ and $0<\!\gamma<\!{\sigma}/{2}$. Consider any $x\in\mathcal{S}$. Then,
  \begin{itemize}
   \item[(i)] $f_{\sigma,\gamma}$ is a smooth function whose gradient $\nabla\!f_{\sigma,\gamma}$ 
              is Lipschitz continuous with the modulus $L_{\!f}\le\frac{1}{\gamma}\|A\|^2$.

  \item [(ii)] $\widehat{\partial}F_{\sigma,\gamma}(x)=\partial F_{\sigma,\gamma}(x)=\nabla\!f_{\sigma,\gamma}(x)
                +\mathcal{N}_{\mathcal{S}}(x)+\lambda\partial\|x\|_0$.

 \item[(iii)] $\widehat{\partial}G_{\sigma,\gamma,\rho}(x)
              =\partial G_{\sigma,\gamma,\rho}(x)\!=\!\nabla\Xi_{\sigma,\gamma}(x)
                \!+\!\mathcal{N}_{\mathcal{S}}(x)+\lambda\rho\partial\|x\|_1$.
 \item[(iv)] When $|x|_{\rm nz}\ge\frac{2a}{\rho(a-1)}$,
             it holds that $\partial G_{\sigma,\gamma,\rho}(x)\subseteq\partial F_{\sigma,\gamma}(x)$.
  \end{itemize}
 \end{lemma}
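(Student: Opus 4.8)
For part (i), the plan is a one-variable calculus check. Differentiating the five branches of $\vartheta_{\sigma,\gamma}$ in \eqref{Lsig-gam} gives $\vartheta_{\sigma,\gamma}'(t)$ equal to $0$, $t/\gamma$, $-1$, $-(t+\sigma+\gamma)/(2\gamma)$ and $0$ on the successive intervals; I would verify that these values agree at the four breakpoints $t=0,-\gamma,\gamma-\sigma,-(\sigma+\gamma)$, so that $\vartheta_{\sigma,\gamma}\in C^1$, and that $\vartheta_{\sigma,\gamma}'$ is globally Lipschitz with modulus $1/\gamma$ (the two quadratic branches have curvature $1/\gamma$ and $1/(2\gamma)$, the other three none). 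Since $\nabla\!f_{\sigma,\gamma}(x)=A^{\mathbb{T}}v(x)$ with $v_i(x)=\vartheta_{\sigma,\gamma}'((Ax)_i)$, this yields $\|\nabla\!f_{\sigma,\gamma}(x)-\nabla\!f_{\sigma,\gamma}(x')\|\le\|A\|\,\|v(x)-v(x')\|\le\gamma^{-1}\|A\|\,\|A(x-x')\|\le\gamma^{-1}\|A\|^2\|x-x'\|$, i.e. the asserted bound on $L_{\!f}$.

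For parts (ii) and (iii), the plan is to invoke the exact sum rule for limiting subdifferentials. First note $f_{\sigma,\gamma}\in C^1$ by (i), and $\Xi_{\sigma,\gamma}\in C^1$ as well: the conjugate $\psi^*$ in \eqref{psi-star} is $C^1$ on $\mathbb{R}$ and vanishes on a neighbourhood of $0$, so each map $x_i\mapsto\psi^*(\rho|x_i|)$ is $C^1$. Thus it suffices to peel off the smooth parts and handle $g_{\lambda}=\delta_{\mathcal S}+\lambda\|\cdot\|_0$ and $h_{\lambda,\rho}=\delta_{\mathcal S}+\lambda\rho\|\cdot\|_1$. Here I would record that $\mathcal S$ is a $C^{\infty}$ manifold (its defining function $\|x\|^2-1$ has nonvanishing gradient), so $\mathcal N_{\mathcal S}(x)=\mathbb{R}x$, $\delta_{\mathcal S}$ is regular and $\partial^{\infty}\delta_{\mathcal S}(x)=\mathcal N_{\mathcal S}(x)$; that $\|\cdot\|_1$ is convex, hence regular with $\partial^{\infty}\|\cdot\|_1(x)=\{0\}$; and that $\|\cdot\|_0$ is separable with $\widehat\partial\|x\|_0=\partial\|x\|_0=\partial^{\infty}\|x\|_0=\{v\in\mathbb{R}^n\mid v_i=0\ \forall i\in\mathrm{supp}(x)\}$, hence regular. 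The only qualification needing a word is $\mathcal N_{\mathcal S}(x)\cap(-\lambda\,\partial^{\infty}\|\cdot\|_0(x))=\{0\}$, i.e. $\mathbb{R}x\cap\{v\mid v_i=0\ \forall i\in\mathrm{supp}(x)\}=\{0\}$, which holds because $x\in\mathcal S$ forces $\mathrm{supp}(x)\neq\emptyset$; for $h_{\lambda,\rho}$ the corresponding qualification is automatic. The sum rule then gives $\partial g_{\lambda}(x)=\mathcal N_{\mathcal S}(x)+\lambda\partial\|x\|_0$ and $\partial h_{\lambda,\rho}(x)=\mathcal N_{\mathcal S}(x)+\lambda\rho\partial\|x\|_1$ together with regularity of both sums (so $\widehat\partial=\partial$), and adding back the $C^1$ terms $f_{\sigma,\gamma}$ and $\Xi_{\sigma,\gamma}$ delivers (ii) and (iii).

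For part (iv), the plan is to substitute the explicit formulas and compare coordinates. Put $S:=\mathrm{supp}(x)$. The hypothesis $|x|_{\rm nz}\ge\frac{2a}{\rho(a-1)}$ means $\rho|x_i|\ge\frac{2a}{a-1}>\frac{2a}{a+1}$ for each $i\in S$, so on $S$ one is in the third branch of \eqref{psi-star}, $\psi^*(\rho|x_i|)=\rho|x_i|-1$, giving $\frac{\partial}{\partial x_i}\psi^*(\rho|x_i|)=\rho\,{\rm sgn}(x_i)$; on $S^c$ the term $\psi^*(\rho|x_i|)$ is identically $0$ near $x$. Hence $\nabla\Xi_{\sigma,\gamma}(x)=\nabla\!f_{\sigma,\gamma}(x)-\lambda\rho\sum_{i\in S}{\rm sgn}(x_i)e^i$. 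Now take an arbitrary $\eta\in\partial G_{\sigma,\gamma,\rho}(x)$; by (iii) it has the form $\eta=\nabla\Xi_{\sigma,\gamma}(x)+tx+\lambda\rho v$ with $t\in\mathbb{R}$ and $v\in\partial\|x\|_1$, so $v_i={\rm sgn}(x_i)$ for $i\in S$ and $v_i\in[-1,1]$ for $i\in S^c$. On $S$ the terms $-\lambda\rho\,{\rm sgn}(x_i)$ and $\lambda\rho v_i$ cancel, so $\eta_i=(\nabla\!f_{\sigma,\gamma}(x))_i+tx_i$; on $S^c$, $\eta_i=(\nabla\!f_{\sigma,\gamma}(x))_i+\lambda\rho v_i$. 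Comparing with $\partial F_{\sigma,\gamma}(x)=\nabla\!f_{\sigma,\gamma}(x)+\{u\in\mathbb{R}^n\mid u_i=sx_i\ (i\in S)\ \text{for some}\ s\in\mathbb{R}\}$ from (ii) (the $S$-block of $\mathcal N_{\mathcal S}(x)+\lambda\partial\|x\|_0$ is $\mathbb{R}x_S$ and, $\partial\|x\|_0$ being a subspace, its $S^c$-block is all of $\mathbb{R}^{|S^c|}$), we see $\eta-\nabla\!f_{\sigma,\gamma}(x)$ has $S$-block $t\,x_S$ and arbitrary $S^c$-block, hence $\eta\in\partial F_{\sigma,\gamma}(x)$.

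I expect the only genuinely delicate point to be the additivity of the limiting subdifferential across the \emph{discontinuous} function $\|\cdot\|_0$ in (ii): one must compute $\partial^{\infty}\|\cdot\|_0$ correctly and check the qualification $\mathbb{R}x\cap\partial^{\infty}(\lambda\|\cdot\|_0)(x)=\{0\}$ — which is exactly where $x\neq0$ (forced by $x\in\mathcal S$) is used. The remaining steps are routine: the piecewise one-variable differentiability computation in (i) and bookkeeping with the explicit cones in (iii)--(iv).
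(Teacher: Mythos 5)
Your proposal is correct and follows essentially the same route as the paper's proof: peel off the smooth parts $f_{\sigma,\gamma}$ and $\Xi_{\sigma,\gamma}$, identify $\partial(\delta_{\mathcal S}+\lambda\|\cdot\|_0)(x)=\mathcal N_{\mathcal S}(x)+\lambda\partial\|x\|_0$ and $\partial(\delta_{\mathcal S}+\lambda\rho\|\cdot\|_1)(x)=\mathcal N_{\mathcal S}(x)+\lambda\rho\partial\|x\|_1$, and for (iv) note that on ${\rm supp}(x)$ the $\ell_1$-subgradient cancels against the gradient of the smooth term $\lambda\sum_{i}\psi^*(\rho|x_i|)$ while the off-support components land in the subspace $\partial\|x\|_0$. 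The only divergence is in part (ii), where you verify the horizon-cone qualification $\mathbb{R}x\cap\{v\mid v_i=0,\ i\in{\rm supp}(x)\}=\{0\}$ and the regularity of $\|\cdot\|_0$ so as to invoke the general sum rule, whereas the paper simply cites \cite[Lemmas 3.1--3.2 and 3.4]{WuPanBi21} for that formula; your argument is a self-contained proof of the cited fact.
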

 \begin{proof}
  {\bf(i)} The result is immediate by the definition of $f_{\sigma,\gamma}$ and the expression of $L_{\sigma,\gamma}$.

  \noindent
  {\bf(ii)} From \cite[Lemma 3.1-3.2 \& 3.4]{WuPanBi21},
  $\widehat{\partial}g_{\lambda}(x)=\partial g_{\lambda}(x)=\mathcal{N}_{\mathcal{S}}(x)+\lambda\partial\|x\|_0$.
  Together with part (i) and \cite[Exercise 8.8]{RW98}, we obtain the desired result.

  \noindent
  {\bf(iii)} By the convexity and Lipschitz continuity of $\ell_1$-norm and \cite[Exercise 10.10]{RW98},
  it follows that $\partial h_{\lambda,\rho}(x)\!=\mathcal{N}_{\mathcal{S}}(x)+\lambda\rho\partial\|x\|_1$.
  Let $\theta_{\!\rho}(z)\!:=\!\rho^{-1}\sum_{i=1}^n\psi^*(\rho|z_i|)$ for $z\in\mathbb{R}^n$.
  Clearly, $\Xi_{\sigma,\gamma}=f_{\sigma,\gamma}-\lambda\rho\theta_{\!\rho}$.
  By the expression of $\psi^*$ in \eqref{psi-star}, it is easy to verify that
  $\theta_{\!\rho}$ is smooth and $\nabla\theta_{\!\rho}$ is Lipschitz continuous
  with modulus $\rho\max(\frac{a+1}{2},\frac{a+1}{2(a-1)})$. Hence,
  $\Xi_{\sigma,\gamma}$ is a smooth function whose gradient is Lipschitz continuous.
  Together with \cite[Exercise 8.8]{RW98} and $G_{\sigma,\gamma,\rho}=\Xi_{\sigma,\gamma}+h_{\lambda,\rho}$,
  we obtain the desired equalities.

  \noindent
  {\bf(iv)} Let $\theta_{\!\rho}$ be the function defined as above.
  After an elementary calculation, we have
  \[
   \nabla\theta_{\!\rho}(x)\!=\big((\psi^*)'(\rho|x_1|){\rm sign}(x_1),\ldots,
   (\psi^*)'(\rho|x_n|){\rm sign}(x_n)\big)^{\mathbb{T}}.
  \]
  Along with $|x|_{\rm nz}\ge\frac{2a}{\rho(a-1)}$ and the expression of $\psi^*$ in \eqref{psi-star},
  we have $\nabla\theta_{\!\rho}(x)={\rm sign}(x)$ and
  $\partial\|x\|_1\!-\!\nabla\theta_{\!\rho}(x)\subseteq\rho\partial\|x\|_0$.
  By part (iii), $\nabla\Xi_{\sigma,\gamma}(x)=\nabla\!f_{\sigma,\gamma}(x)-\lambda\rho\nabla\theta_{\!\rho}(x)$.
  Comparing $\partial G_{\sigma,\gamma,\rho}(x)$ in part (iii) with $\partial F_{\sigma,\gamma}(x)$
  in part (ii) yields that $\partial G_{\sigma,\gamma,\rho}(x)\subseteq\partial F_{\sigma,\gamma}(x)$.
 \end{proof}
 \subsection{Stationary points}\label{sec2.3}

  Lemma \ref{lemma-critical} shows that for the functions $F_{\sigma,\gamma}$ and $G_{\sigma,\gamma,\rho}$
  the set of their regular critical points coincides with that of their limiting critical points,
  so we call the critical point of $F_{\sigma,\gamma}$ a stationary point of \eqref{znorm-Moreau},
  and the critical point of $G_{\sigma,\gamma,\rho}$ a stationary point of \eqref{Esurrogate}.
  Motivated by the work \cite{Beck19}, we introduce a class of $\tau$-stationary points for them.
 \begin{definition}\label{tau-critical}
  Let $\tau>0$. A vector $x\in\mathbb{R}^n$ is called a $\tau$-stationary point of
  \eqref{znorm-Moreau} if $x\in\mathcal{P}_{\!\tau}g_{\lambda}(x\!-\!\tau\nabla\!f_{\sigma,\gamma}(x))$,
  and is called a $\tau$-stationary point of \eqref{Esurrogate} if
  $x\in\mathcal{P}_{\!\tau}h_{\lambda,\rho}(x\!-\!\tau\nabla\Xi_{\sigma,\gamma}(x))$.
 \end{definition}

 In the sequel, we denote by $S_{\tau,g_{\lambda}}$ and $S_{\tau,h_{\lambda,\rho}}$
 the $\tau$-stationary point set of \eqref{znorm-Moreau} and \eqref{Esurrogate}, respectively.
 By Proposition \ref{proxm-glam} and \ref{proxm-hlam}, we have the following result for them.
 \begin{lemma}\label{relation-critical}
  Fix any $\tau>0,\lambda>0,\rho>0$ and $0<\!\gamma<\!{\sigma}/{2}$. Then, $S_{\tau,g_{\lambda}}\subseteq{\rm crit}F_{\sigma,\gamma}$
  and $S_{\tau,h_{\lambda,\rho}}\subseteq{\rm crit}G_{\sigma,\gamma,\rho}$.
 \end{lemma}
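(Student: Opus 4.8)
The plan is to unwind the definitions of the $\tau$-stationary points through the closed-form proximal maps from Propositions~\ref{proxm-glam} and~\ref{proxm-hlam}, and then verify the first-order optimality (limiting-critical) inclusion $0\in\partial F_{\sigma,\gamma}(x)$ (resp.\ $0\in\partial G_{\sigma,\gamma,\rho}(x)$) using Lemma~\ref{lemma-critical}(ii)--(iii). The core observation is that any proximal point is a regular critical point of the associated proximal subproblem, so membership in $\mathcal{P}_{\!\tau}g_{\lambda}(x-\tau\nabla f_{\sigma,\gamma}(x))$ should translate, via the sum rule, into $0\in\nabla f_{\sigma,\gamma}(x)+\widehat{\partial}g_{\lambda}(x)\subseteq\nabla f_{\sigma,\gamma}(x)+\partial g_{\lambda}(x)=\partial F_{\sigma,\gamma}(x)$.

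\textbf{Step 1.} Fix $x\in S_{\tau,g_{\lambda}}$, so $x\in\mathcal{P}_{\!\tau}g_{\lambda}(u)$ with $u:=x-\tau\nabla f_{\sigma,\gamma}(x)$. By definition of the proximal map, $x$ minimizes $z\mapsto\frac{1}{2\tau}\|z-u\|^2+g_{\lambda}(z)$; hence $x\in\mathcal{S}$ (so $\|x\|=1$) and, writing the first-order optimality condition for this minimizer, $0\in\frac{1}{\tau}(x-u)+\widehat{\partial}g_{\lambda}(x)$, i.e.\ $\nabla f_{\sigma,\gamma}(x)=\frac{1}{\tau}(u-x)\in\widehat{\partial}g_{\lambda}(x)$, that is $0\in\nabla f_{\sigma,\gamma}(x)+(-\nabla f_{\sigma,\gamma}(x))$ trivially; more to the point, $-\nabla f_{\sigma,\gamma}(x)=\frac{1}{\tau}(x-u)\in\widehat{\partial}g_{\lambda}(x)$. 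Since $\widehat{\partial}g_{\lambda}(x)\subseteq\partial g_{\lambda}(x)$ (Remark~\ref{remark-Fsubdiff}(i)) and $\partial g_{\lambda}(x)=\mathcal{N}_{\mathcal{S}}(x)+\lambda\partial\|x\|_0$ by the argument in Lemma~\ref{lemma-critical}(ii), we get $-\nabla f_{\sigma,\gamma}(x)\in\mathcal{N}_{\mathcal{S}}(x)+\lambda\partial\|x\|_0$, hence $0\in\nabla f_{\sigma,\gamma}(x)+\mathcal{N}_{\mathcal{S}}(x)+\lambda\partial\|x\|_0=\partial F_{\sigma,\gamma}(x)$ by Lemma~\ref{lemma-critical}(ii). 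Thus $x\in{\rm crit}\,F_{\sigma,\gamma}$.

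\textbf{Step 2.} The argument for $S_{\tau,h_{\lambda,\rho}}\subseteq{\rm crit}\,G_{\sigma,\gamma,\rho}$ is identical in structure: if $x\in\mathcal{P}_{\!\tau}h_{\lambda,\rho}(x-\tau\nabla\Xi_{\sigma,\gamma}(x))$, then $x\in\mathcal{S}$ and the first-order condition for the proximal subproblem gives $-\nabla\Xi_{\sigma,\gamma}(x)\in\widehat{\partial}h_{\lambda,\rho}(x)\subseteq\partial h_{\lambda,\rho}(x)=\mathcal{N}_{\mathcal{S}}(x)+\lambda\rho\partial\|x\|_1$, where the last equality is from the argument in Lemma~\ref{lemma-critical}(iii). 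Therefore $0\in\nabla\Xi_{\sigma,\gamma}(x)+\mathcal{N}_{\mathcal{S}}(x)+\lambda\rho\partial\|x\|_1=\partial G_{\sigma,\gamma,\rho}(x)$ by Lemma~\ref{lemma-critical}(iii), so $x\in{\rm crit}\,G_{\sigma,\gamma,\rho}$.

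\textbf{Main obstacle.} The only nontrivial point is justifying that a proximal point satisfies the first-order inclusion $0\in\frac{1}{\tau}(x-u)+\widehat{\partial}g_{\lambda}(x)$ even though $g_{\lambda}$ is nonconvex and nonsmooth: one needs the Fermat rule for generalized subdifferentials (\cite[Theorem~10.1]{RW98}) together with the sum rule $\widehat{\partial}(\tfrac{1}{2\tau}\|\cdot-u\|^2+g_{\lambda})(x)=\tfrac{1}{\tau}(x-u)+\widehat{\partial}g_{\lambda}(x)$, valid since the quadratic term is smooth (\cite[Exercise~8.8]{RW98}). Everything else is a direct substitution of the subdifferential formulas already established in Lemma~\ref{lemma-critical}; the closed forms in Propositions~\ref{proxm-glam} and~\ref{proxm-hlam} are not actually needed for this containment, only the variational characterization of the proximal map as a minimizer.
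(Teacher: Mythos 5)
Your proof is correct, but it takes a genuinely different route from the paper's. The paper proves both inclusions computationally: it invokes the closed-form descriptions of $\mathcal{P}_{\!\tau}g_{\lambda}$ and $\mathcal{P}_{\!\tau}h_{\lambda,\rho}$ from Propositions \ref{proxm-glam} and \ref{proxm-hlam} to extract componentwise identities --- e.g.\ $\overline{x}_i=\alpha^{-1}[\overline{x}_i-\tau(\nabla\!f_{\sigma,\gamma}(\overline{x}))_i]$ on ${\rm supp}(\overline{x})$, and separate conditions on and off the index set $J$ in the $\ell_1$ case --- and then matches these against the explicit formulas \eqref{subdiff-sphere} for $\mathcal{N}_{\mathcal{S}}(\overline{x})$ and $\partial\|\overline{x}\|_0$ (resp.\ $\partial\|\overline{x}\|_1$). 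You instead apply Fermat's rule to the proximal subproblem together with the smooth-plus-nonsmooth sum rule, obtaining $-\nabla\!f_{\sigma,\gamma}(x)\in\widehat{\partial}g_{\lambda}(x)\subseteq\partial g_{\lambda}(x)$ directly and then reading off $0\in\partial F_{\sigma,\gamma}(x)$ from Lemma \ref{lemma-critical}(ii) (and analogously for $G_{\sigma,\gamma,\rho}$). Your argument is shorter, never uses the closed forms (as you correctly note), and in fact works for any proper lsc prox-bounded regularizer whose subdifferential decomposes as in Lemma \ref{lemma-critical}; it also delivers the formally stronger regular-critical-point conclusion, though by Lemma \ref{lemma-critical}(ii)--(iii) that is the same set here. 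What the paper's computation buys in exchange is the explicit multiplier $\alpha$ (the normalization constant of the truncated vector), which makes transparent the observation immediately after the lemma that the inclusions are generally strict. Both proofs are valid; yours is the more economical one.
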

 \begin{proof}
  Pick any $\overline{x}\in\!S_{\tau,g_{\lambda}}$. Then 
  $\overline{x}=\mathcal{P}_{\!\tau}g_{\lambda}(\overline{x}\!-\!\tau\nabla\!f_{\sigma,\gamma}(\overline{x}))$.
  By Proposition \ref{proxm-glam}, for each $i\in{\rm supp}(\overline{x})$,
  $\overline{x}_i=\alpha^{-1}[\overline{x}_i\!-\!\tau(\nabla\!f_{\sigma,\gamma}(\overline{x}))_i]$
  for some $\alpha\!>0$ (depending on $\overline{x}$). Then, for each $i\in{\rm supp}(\overline{x})$,
  it holds that $(\nabla\!f_{\sigma,\gamma}(\overline{x}))_i+\tau^{-1}(\alpha\!-\!1)\overline{x}_i=0$.
  Recall that
  \begin{equation}\label{subdiff-sphere}
     \mathcal{N}_{\mathcal{S}}(\overline{x})=\big\{\beta\overline{x}\,|\,\beta\in\mathbb{R}\big\}
     \ \ {\rm and}\ \
    \partial\|\overline{x}\|_0=\big\{v\in\mathbb{R}^n\,|\, v_i=0\ {\rm for}\ i\in{\rm supp}(\overline{x})\big\}.
  \end{equation}
  We have $0\in \nabla\!f_{\sigma,\gamma}(\overline{x})+\mathcal{N}_{\mathcal{S}}(\overline{x})+\lambda\partial\|\overline{x}\|_0$,
  and hence $\overline{x}\in{\rm crit}F_{\sigma,\gamma}$ by Lemma \ref{lemma-critical} (ii).
  
  Pick any $\overline{x}\in\!S_{\tau,h_{\lambda,\rho}}$.
  Write $\overline{u}=\overline{x}\!-\!\tau\nabla\Xi_{\sigma,\gamma}(\overline{x})$. 
  Then, we have $\overline{x}=\mathcal{P}_{\!\tau}h_{\lambda,\rho}(\overline{u})$.
  Let $J\!:=\{i\in[n]\,|\,|\overline{u}_i|>\tau\lambda\rho\}$ and $\overline{J}=[n]\backslash J$.
  For each $i\in\overline{J}$,
  $|\nabla\Xi_{\sigma,\gamma}(\overline{x})-\tau^{-1}\overline{x}_i|\le\lambda\rho$.
  Since the subdifferential of the function $t\mapsto |t|$ at $0$ is $[-1,1]$,
  it holds that
  \[
    0\in[\nabla\Xi_{\sigma,\gamma}(\overline{x})]_{\overline{J}}-\tau^{-1}\overline{x}_{\overline{J}}
    +\lambda\rho\partial\|\overline{x}_{\overline{J}}\|_1.
  \]
   By Proposition \ref{proxm-hlam}, we have
  \(
    \overline{x}_J=\frac{\overline{u}_J-\tau\lambda\rho{\rm sign}(\overline{u}_J)}
    {\|\overline{u}_J-\tau\lambda\rho{\rm sign}(\overline{u}_J)\|}.
  \)
  Together with ${\rm sign}(\overline{u}_J)={\rm sign}(\overline{x}_J)$,
  \[
    (\nabla\Xi_{\sigma,\gamma}(\overline{x}))_J
    +\tau^{-1}(\|\overline{u}_J\!-\!\tau\lambda\rho{\rm sign}(\overline{u}_J)\|\!-\!1)\overline{x}_J
    +\lambda\rho{\rm sign}(\overline{x}_J)=0.
  \]
  By the expression of $\mathcal{N}_{\mathcal{S}}(\overline{x})$ in \eqref{subdiff-sphere},
  from the last two equations it follows that
  \[
    0\in \nabla\Xi_{\sigma,\gamma}(\overline{x})+\mathcal{N}_{\mathcal{S}}(\overline{x})+\lambda\rho\partial\|\overline{x}\|_1.
  \]
  By Lemma \ref{lemma-critical} (iii), this shows that $\overline{x}\in{\rm crit}G_{\sigma,\gamma,\rho}$.
  The proof is completed.
  \end{proof}

  Note that if $\overline{x}$ is a stationary point of \eqref{znorm-Moreau},
  then for $i\notin {\rm supp}(\overline{x})$, $[\mathcal{P}_{\!\tau}g_{\lambda}(\overline{x}\!-\!\tau\nabla\!f_{\sigma,\gamma}(\overline{x}))]_i$
  does not necessarily equal $0$. A similar case also occurs for the stationary point of \eqref{Esurrogate}.
  This means that the two inclusions in Lemma \ref{relation-critical} are generally strict.
  By combining Lemma \ref{relation-critical} with \cite[Theorem 10.1]{RW98},
  it is immediate to obtain the following conclusion.
  \begin{corollary}\label{corollary-opt}
   Fix any $\tau>0$. For the problems \eqref{znorm-Moreau} and \eqref{Esurrogate},
   their local optimal solution is necessarily is a stationary point,
   and consequently a $\tau$-stationary point.
  \end{corollary}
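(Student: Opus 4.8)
The plan is to establish the two assertions separately and to treat \eqref{znorm-Moreau} and \eqref{Esurrogate} in parallel, since the only difference lies in replacing $(f_{\sigma,\gamma},g_{\lambda})$ and Proposition \ref{proxm-glam} by $(\Xi_{\sigma,\gamma},h_{\lambda,\rho})$ and Proposition \ref{proxm-hlam}. For the stationarity assertion I would invoke the generalized Fermat rule \cite[Theorem 10.1]{RW98}: a local minimizer $\overline{x}$ of $F_{\sigma,\gamma}$ satisfies $0\in\widehat{\partial}F_{\sigma,\gamma}(\overline{x})$, and since Lemma \ref{lemma-critical}(ii)--(iii) shows that the regular and limiting subdifferentials of $F_{\sigma,\gamma}$ and $G_{\sigma,\gamma,\rho}$ coincide at every point of $\mathcal{S}$, this upgrades to $0\in\partial F_{\sigma,\gamma}(\overline{x})$, i.e. $\overline{x}\in{\rm crit}\,F_{\sigma,\gamma}$, which is by definition a stationary point of \eqref{znorm-Moreau}; the same reasoning applies to \eqref{Esurrogate}.

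For the $\tau$-stationarity assertion I would argue directly from Definition \ref{tau-critical}, verifying that $\overline{x}$ lies in the argmin defining $\mathcal{P}_{\!\tau}g_{\lambda}(u)$ with $u:=\overline{x}-\tau\nabla\!f_{\sigma,\gamma}(\overline{x})$. Setting $S:={\rm supp}(\overline{x})$, the stationarity relation together with \eqref{subdiff-sphere} yields $(\nabla\!f_{\sigma,\gamma}(\overline{x}))_i=-\beta\overline{x}_i$ for $i\in S$, where $\beta:=-\langle\nabla\!f_{\sigma,\gamma}(\overline{x}),\overline{x}\rangle$, so that $u_S=(1+\tau\beta)\overline{x}_S$ while $u_i=-\tau(\nabla\!f_{\sigma,\gamma}(\overline{x}))_i$ for $i\notin S$. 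After discarding the constant $\|u\|^2$, the proximal subproblem over $\mathcal{S}$ reduces to $\min_{\|z\|=1}\{-\langle z,u\rangle+\tau\lambda\|z\|_0\}$, whose minimizers, by Proposition \ref{proxm-glam}, are obtained by choosing an admissible support and aligning $z$ positively with $u$ on it. The plan is then to use $u_S=(1+\tau\beta)\overline{x}_S$ together with $\|\overline{x}_S\|=1$ to confirm that $S$ is such an admissible support and that the associated normalized minimizer equals $\overline{x}$, giving $\overline{x}\in\mathcal{P}_{\!\tau}g_{\lambda}(u)$; Lemma \ref{relation-critical} then supplies the reverse inclusion $S_{\tau,g_{\lambda}}\subseteq{\rm crit}\,F_{\sigma,\gamma}$ and thereby the consistency of the two notions. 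The treatment of \eqref{Esurrogate} is identical, with Proposition \ref{proxm-hlam} describing $\mathcal{P}_{\!\tau}h_{\lambda,\rho}$ and $\nabla\Xi_{\sigma,\gamma}$ in place of $\nabla\!f_{\sigma,\gamma}$.

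The step I expect to be the main obstacle is exactly this alignment check for an \emph{arbitrary} fixed $\tau>0$. Because the DC loss obeys $\vartheta_{\sigma,\gamma}'(t)\,t\ge 0$ for every $t\in\mathbb{R}$, one has $\langle\nabla\!f_{\sigma,\gamma}(\overline{x}),\overline{x}\rangle\ge 0$, hence $\beta\le 0$; thus $u_S=(1+\tau\beta)\overline{x}_S$ is positively aligned with $\overline{x}_S$ precisely when $1+\tau\beta>0$, a condition that, for $\beta<0$, can fail once $\tau\ge 1/|\beta|$. In that regime the minimizer of the subproblem would align with $-\overline{x}_S$ or switch its support entirely, returning $-\overline{x}$ rather than $\overline{x}$. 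The hard part is therefore to control this threshold uniformly in $\tau$: I would try to exploit the local optimality of $\overline{x}$ beyond its first-order consequence — in particular the sign pattern of the residuals $(A\overline{x})_i$ relative to the active band $(-(\sigma+\gamma),0)$ of $\vartheta_{\sigma,\gamma}$, which governs the sign of $\langle\nabla\!f_{\sigma,\gamma}(\overline{x}),\overline{x}\rangle$ — to force $1+\tau\beta>0$, and I regard securing this for every $\tau>0$, rather than only for $\tau\le 1/L_{\!f}$ (where the descent-lemma majorization makes $\overline{x}$ a global minimizer of the subproblem automatically via Lemma \ref{lemma-critical}(i)), as the delicate core of the argument.
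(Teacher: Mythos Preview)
Your treatment of the first assertion is exactly what the paper intends: invoke the generalized Fermat rule \cite[Theorem 10.1]{RW98} to obtain $0\in\widehat{\partial}F_{\sigma,\gamma}(\overline{x})$ and then use Lemma~\ref{lemma-critical}(ii)--(iii) to upgrade this to $0\in\partial F_{\sigma,\gamma}(\overline{x})$; the same for $G_{\sigma,\gamma,\rho}$. That part is complete.

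For the second assertion, you have put your finger on a genuine issue rather than a gap in your own argument. The paper's one-line justification (``by combining Lemma~\ref{relation-critical} with \cite[Theorem~10.1]{RW98}'') does not actually yield ``local optimal $\Rightarrow$ $\tau$-stationary'': Lemma~\ref{relation-critical} supplies only the inclusion $S_{\tau,g_\lambda}\subseteq{\rm crit}\,F_{\sigma,\gamma}$, i.e.\ the \emph{reverse} direction, and the paper itself remarks just above the corollary that this inclusion is generally strict. Your alignment analysis is correct: stationarity gives $u_S=(1+\tau\beta)\overline{x}_S$ with $\beta=-\langle\nabla f_{\sigma,\gamma}(\overline{x}),\overline{x}\rangle\le 0$, and since $\vartheta_{\sigma,\gamma}'(t)t\ge 0$ one can have $\beta<0$ at a genuine local minimizer (take $n=2$, $m=1$, $A=(-a,0)$ with $0<a<\gamma$, $\overline{x}=e_1$; then $\beta=-a^2/\gamma<0$ and $e_1$ is a strict local minimizer for any $\lambda>0$). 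For $\tau>1/|\beta|$ the proximal map $\mathcal{P}_{\!\tau}g_\lambda(u)$ returns $-e_1$, not $e_1$. Hence the $\tau$-stationarity claim, as stated for \emph{every} $\tau>0$, cannot be salvaged; it holds only under an additional restriction such as $\tau\le 1/L_{\!f}$ (where your descent-lemma argument goes through for global minimizers) or $1+\tau\beta>0$. Your difficulty here is not a flaw in your strategy but a reflection of the corollary being stronger than what the paper's indicated proof can deliver.
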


 \subsection{Kurdyka-{\L}\"{o}jasiewicz property}\label{sec2.3}
 \begin{definition}\label{KL-def}
  (see \cite{Attouch10}) A proper lsc function $h\!:\mathbb{R}^n\to\overline{\mathbb{R}}$ is said to have
  the KL property at $\overline{x}\in{\rm dom}\,\partial h$ if there exist $\eta\in(0,+\infty]$,
  a neighborhood $\mathcal{U}$ of $\overline{x}$, and a continuous concave function
  $\varphi\!:[0,\eta)\to\mathbb{R}_{+}$ that is continuously differentiable on $(0,\eta)$
  with $\varphi'(s)>0$ for all $s\in(0,\eta)$ and $\varphi(0)=0$, such that
  for all $x\in\mathcal{U}\cap\big[h(\overline{x})<h<h(\overline{x})+\eta\big]$,
  \[
    \varphi'(h(x)-h(\overline{x})){\rm dist}(0,\partial h(x))\ge 1.
  \]
   If $\varphi$ can be chosen as $\varphi(t)=ct^{1-\theta}$ with $\theta\in[0,1)$
  for some $c>0$, then $h$ is said to have the KL property of exponent $\theta$
  at $\overline{x}$. If $h$ has the KL property (of exponent $\theta$) at each point
  of ${\rm dom}\,\partial h$, then it is called a KL function (of exponent $\theta$).
 \end{definition}
 \begin{remark}\label{KL-remark}
  {\bf(a)} As discussed thoroughly in \cite[Section 4]{Attouch10}, there are
  a large number of nonconvex nonsmooth functions are the KL functions,
  which include real semi-algebraic functions and those functions definable
  in an o-minimal structure.

  \noindent
  {\bf(b)} From \cite[Lemma 2.1]{Attouch10}, a proper lsc function has
  the KL property of exponent $\theta=0$ at any noncritical point.
  Thus, to prove that a proper lsc $h\!:\mathbb{R}^n\to\overline{\mathbb{R}}$
  is a KL function (of exponent $\theta$), it suffices to achieve its KL property
  (of exponent $\theta$) at critical points. On the calculation of KL exponent,
  please refer to the recent works \cite{LiPong18,YuLiPong21}.
 \end{remark}
 \section{Equivalent surrogates of the model \eqref{znorm-Moreau}}\label{sec3}

 Pick any $\phi\in\!\mathscr{L}$. By invoking equation \eqref{phi-assump}, it is immediate to verify that
 for any $x\in\mathbb{R}^n$,
 \[
   \|x\|_0=\min_{w\in[0,e]}\Big\{\textstyle{\sum_{i=1}^n}\phi(w_i)\ \ {\rm s.t.}\ \langle e-\!w,|x|\rangle=0\Big\}.
 \]
 This means that the zero-norm regularized problem \eqref{znorm-Moreau} can be reformulated as
 \begin{equation}\label{Eznorm-MPEC}
  \min_{x\in\mathcal{S},w\in[0,e]}\Big\{f_{\sigma,\gamma}(x)+\lambda\textstyle{\sum_{i=1}^n}\phi(w_i)
  \quad\mbox{s.t.}\ \ \langle e-w,|x|\rangle=0\Big\}
 \end{equation}
 in the following sense: if $x^*$ is globally optimal to the problem \eqref{znorm-Moreau},
 then $(x^*\!,{\rm sign}(|x^*|))$ is a global optimal solution of the problem \eqref{Eznorm-MPEC},
 and conversely, if $(x^*,w^*)$ is a global optimal solution of \eqref{Eznorm-MPEC},
 then $x^*$ is globally optimal to \eqref{znorm-Moreau}. The problem \eqref{Eznorm-MPEC}
 is a mathematical program with an equilibrium constraint $e\!-\!w\ge 0,|x|\ge 0,\langle e-w,|x|\rangle=0$.
 In this section, we shall show that the penalty problem induced by this equilibrium constraint, i.e.,
 \begin{equation}\label{Penalty-MPEC}
  \min_{x\in\mathcal{S},w\in[0,e]}\Big\{f_{\sigma,\gamma}(x)+\lambda\textstyle{\sum_{i=1}^n}\phi(w_i)
  +\rho\lambda\langle e-w,|x|\rangle\Big\}
 \end{equation}
 is a global exact penalty of \eqref{Eznorm-MPEC} and from this global exact penalty achieve
 the equivalent surrogate in \eqref{Esurrogate}, where $\rho>0$ is the penalty parameter.
 For each $s\in[n]$, write
 \[
   \Omega_{s}:=\mathcal{S}\cap\mathcal{R}_{s}\ \ {\rm with}\ \ \mathcal{R}_{s}:=\{x\in\mathbb{R}^n\,|\,\|x\|_0\le s\}.
 \]
 To get the conclusion of this section, we need the following global error bound result.
 \begin{lemma}\label{calmness}
  For each $s\in\{1,2,\ldots,n\}$, there exists $\kappa_{s}>0$ such that
  for all $x\in\mathcal{S}$,
  \[
    {\rm dist}(x,\Omega_{s})\le\kappa_{s}\big[\|x\|_1-\|x\|_{(s)}\big],
  \]
  where $\|x\|_{(s)}$ denotes the sum of the first $s$ largest entries of the vector $x\in\mathbb{R}^n$.
 \end{lemma}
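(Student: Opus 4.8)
The plan is to establish the error bound in two stages: first a pointwise (local) calmness-type estimate at each $x\in\mathcal S$, then a compactness argument to pass to a uniform constant $\kappa_s$. Observe first that the right-hand side $\|x\|_1-\|x\|_{(s)}$ equals the sum of the $n-s$ smallest absolute values of the entries of $x$, i.e. $\|x\|_1-\|x\|_{(s)}=\sum_{i=s+1}^{n}|x|^\downarrow_i\ge 0$, and it vanishes exactly when $\|x\|_0\le s$, i.e. when $x\in\mathcal R_s$. Since also $x\in\mathcal S$, the zero set of the right-hand side on $\mathcal S$ is precisely $\Omega_s=\mathcal S\cap\mathcal R_s$, so the inequality has at least a chance of holding; what must be quantified is the rate.

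For the quantitative step I would argue as follows. Fix $x\in\mathcal S$ and let $J$ index the $s$ largest-magnitude entries of $x$ (ties broken arbitrarily), with $\overline J=[n]\setminus J$. Define $\widetilde x$ by zeroing out $x_{\overline J}$ and renormalizing: $\widetilde x:=x_J/\|x_J\|$ (note $x_J\ne 0$ unless $x=0$, which is impossible on $\mathcal S$; and if $\|x\|_0\le s$ the estimate is trivial since the right side is $0$ and $x\in\Omega_s$). Then $\widetilde x\in\Omega_s$, so ${\rm dist}(x,\Omega_s)\le\|x-\widetilde x\|$. Now $\|x-\widetilde x\|\le\|x_{\overline J}\|+\big\|x_J-x_J/\|x_J\|\big\|=\|x_{\overline J}\|+\big|1-\|x_J\|\big|$. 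Since $\|x\|=1$ we have $\|x_J\|^2=1-\|x_{\overline J}\|^2$, hence $\big|1-\|x_J\|\big|=\frac{\|x_{\overline J}\|^2}{1+\|x_J\|}\le\|x_{\overline J}\|^2\le\|x_{\overline J}\|$ (using $\|x_{\overline J}\|\le\|x\|=1$). Therefore ${\rm dist}(x,\Omega_s)\le 2\|x_{\overline J}\|$. It remains to dominate $\|x_{\overline J}\|$ by a constant multiple of $\|x_{\overline J}\|_1=\sum_{i=s+1}^{n}|x|^\downarrow_i=\|x\|_1-\|x\|_{(s)}$, and $\|x_{\overline J}\|\le\|x_{\overline J}\|_1$ always holds. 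This yields ${\rm dist}(x,\Omega_s)\le 2\big[\|x\|_1-\|x\|_{(s)}\big]$ directly, so in fact $\kappa_s=2$ works uniformly and no compactness argument is needed.

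The main point to be careful about is the handling of the normalization term $\big|1-\|x_J\|\big|$: one needs $x\in\mathcal S$ precisely here, so that $\|x_J\|$ is close to $1$ whenever the tail $\|x_{\overline J}\|$ is small, and the bound $\big|1-\|x_J\|\big|\le\|x_{\overline J}\|^2$ (not merely $\le\|x_{\overline J}\|$) is what makes the constant clean, though the weaker bound already suffices. A secondary subtlety is the tie-breaking in the choice of $J$: different choices give different $\widetilde x$, but each lies in $\Omega_s$, so the distance bound is valid for any admissible choice. One should also record the degenerate case $x\in\mathcal R_s$ separately (both sides are controlled trivially: ${\rm dist}(x,\Omega_s)=0$ since $x\in\Omega_s$). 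Thus the lemma holds with the explicit uniform constant $\kappa_s=2$ for every $s\in[n]$; if one prefers, the argument can be phrased via a standard compactness/continuity reduction on the sphere, but the explicit estimate above is cleaner and stronger.
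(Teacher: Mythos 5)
Your proof is correct, and it takes a genuinely different route from the paper. The paper does not construct a nearby point of $\Omega_s$ explicitly against the $\ell_1$-tail residual; instead it introduces the multifunction $\Upsilon_{\!s}(\tau)=\{x\in\mathcal{S}\,|\,\|x\|_1-\|x\|_{(s)}=\tau\}$, proves its calmness at $0$ by establishing the local estimate ${\rm dist}(x,\Omega_s)\le{\rm dist}(x,\mathcal{S})+3\sqrt{n}\,{\rm dist}(x,\mathcal{R}_s)$ near each $\widehat{x}\in\Omega_s$ (via projection onto $\mathcal{R}_s$ followed by renormalization), and then invokes Theorems 3.1 and 3.3 of \cite{QianPan21} together with the compactness of $\mathcal{S}$ to upgrade this to the stated global error bound; the resulting $\kappa_s$ is therefore non-explicit, and the paper itself concedes that the intermediate constant $3\sqrt{n}$ is rough. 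Your argument --- truncate to the $s$ largest-magnitude entries, renormalize, and control the renormalization error by $|1-\|x_J\||=\|x_{\overline J}\|^2/(1+\|x_J\|)\le\|x_{\overline J}\|\le\|x_{\overline J}\|_1$ --- is self-contained, avoids the calmness machinery entirely, and yields the explicit dimension-free constant $\kappa_s=2$ (indeed $\sqrt{2}$ if one keeps the orthogonal decomposition $\|x-\widetilde x\|^2=\|x_{\overline J}\|^2+(1-\|x_J\|)^2$ instead of the triangle inequality). All the steps check out, including the nondegeneracy of $x_J$ on $\mathcal{S}$ and the trivial case $x\in\mathcal{R}_s$. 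What the paper's approach buys is consistency with the general exact-penalty framework of \cite{QianPan21,LiuBiPan18} used in Section 3; what yours buys is a stronger, fully explicit statement, which would in turn make the penalty threshold $\overline{\rho}$ in Proposition 3.2 explicit and independent of $n$.
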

 \begin{proof}
  Fix any $s\in\{1,2,\ldots,n\}$. We first argue that the following multifunction
  \[
    \Upsilon_{\!s}(\tau):=\big\{x\in\mathcal{S}\,|\,\|x\|_1-\|x\|_{(s)}=\tau\big\}
    \ \ {\rm for}\ \tau\in\mathbb{R}
  \]
  is calm at $0$ for every $x\in\Upsilon_{\!s}(0)$. Pick any $\widehat{x}\in\Upsilon_{\!s}(0)$.
  By \cite[Theorem 3.1]{QianPan21}, the calmness of $\Upsilon_{\!s}$ at $0$ for $\widehat{x}$
  is equivalent to the existence of $\delta>0$ and $\kappa>0$ such that
  \begin{equation}\label{aim-ineq1}
   {\rm dist}(x,\Omega_{s})\le\kappa\big[{\rm dist}(x,\mathcal{S})+{\rm dist}(x,\mathcal{R}_{s})\big]
   \ \ {\rm for\ all}\ x\in\mathbb{B}(\widehat{x},\delta).
  \end{equation}
  Since $\|\widehat{x}\|=1$, there exists $\varepsilon\in(0,1/2)$ such that
  for all $x\in\mathbb{B}(\widehat{x},\varepsilon)$, $x\ne 0$.
  Fix any $x\in\mathbb{B}(\widehat{x},{\varepsilon}/{2})$.
  Clearly, $\|x\|\ge\|\widehat{x}\|-{\varepsilon}/{2}\ge{3}/{4}$.
  This means that $\|x\|_{\infty}\!\ge\frac{3}{4\sqrt{n}}$.
  Pick any $x^*\in\Pi_{\mathcal{R}_{s}}(x)$. Clearly,
  $\|x^*\|_{\infty}=\|x\|_\infty\ge\frac{3}{4\sqrt{n}}$ and $\frac{x^*}{\|x^*\|}\in\Omega_s$.
  Then, with $\overline{x}=\frac{x}{\|x\|}$,
  \begin{align*}
  {\rm dist}(x,\Omega_{s})&\le\|x-x^*\!/{\|x^*\|}\|
  \le\|x-\overline{x}\|+\|\overline{x}-x^*\!/{\|x^*\|}\|\\
  &\le\|x-\overline{x}\|+\frac{\|(x-x^*)\|x\|+x(\|x^*\|-\|x\|)\|}{\|x\|\|x^*\|}\\
  &\le\|x-\overline{x}\|+(2/\|x^*\|)\|x-x^*\|
  \le{\rm dist}(x,\mathcal{S})+3\sqrt{n}{\rm dist}(x,\mathcal{R}_{s}).
  \end{align*}
  This shows that the inequality \eqref{aim-ineq1} holds for $\delta=\varepsilon/2$ and
  $\kappa=3\sqrt{n}$. Consequently, the mapping $\Upsilon_{\!s}$ is calm at $0$ for
  every $x\in\Upsilon_{\!s}(0)$. Now by invoking \cite[Theorem 3.3]{QianPan21}
  and the compactness of $\mathcal{S}$, we obtain the desired result.
  The proof is completed.
  \end{proof}

  Now we are ready to show that the problem \eqref{Penalty-MPEC}
  is a global exact penalty of \eqref{Eznorm-MPEC}.
 \begin{proposition}\label{partial-calm}
  Let $\overline{\rho}:=\frac{\kappa\phi_{-}'(1)(1-t^*)\alpha_{\!f}}{\lambda(1-t_0)}$
  where $t_0\in[0,1)$ is such that $\frac{1}{1-t^*}\in\partial\phi(t_0)$,
  $\phi_{-}'(1)$ is the left derivative of $\phi$ at $1$,
  $\alpha_{\!f}$ is the Lipschitz constant of $f_{\sigma,\gamma}$ on $\mathcal{S}$,
  and $\kappa=\max_{1\le s\le n}\kappa_s$ with $\kappa_s$ given by Lemma \ref{calmness}.
  Then, for any $(x,w)\in\mathcal{S}\times[0,e]$,
  \begin{equation}\label{aim-ineq}
   \big[f_{\sigma,\gamma}(x) +\lambda{\textstyle\sum_{i=1}^n}\phi(w_i)\big]
   -\big[f_{\sigma,\gamma}(x^*) +\lambda{\textstyle\sum_{i=1}^n}\phi(w_i^*)\big]
   +\overline{\rho}\lambda\langle e\!-\!w,|x|\rangle\ge 0,
 \end{equation}
  where $(x^*,w^*)$ is an arbitrary global optimal solution of \eqref{Eznorm-MPEC},
  and consequently the problem \eqref{Penalty-MPEC} associated to each $\rho>\overline{\rho}$
  has the same global optimal solution set as \eqref{Eznorm-MPEC} does.
 \end{proposition}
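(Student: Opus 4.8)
The plan is to establish \eqref{aim-ineq} first and then read off the claim about coincidence of optimal solution sets. The inequality ``$\le$'' between the two optimal values is immediate: on the feasible set $\{(x,w)\in\mathcal{S}\times[0,e]:\langle e-w,|x|\rangle=0\}$ of \eqref{Eznorm-MPEC} the penalty term of \eqref{Penalty-MPEC} vanishes, so restricting \eqref{Penalty-MPEC} there reproduces \eqref{Eznorm-MPEC}; everything therefore reduces to \eqref{aim-ineq}. Write $v^*$ for the common optimal value of \eqref{znorm-Moreau} and \eqref{Eznorm-MPEC}. Given an arbitrary $(x,w)\in\mathcal{S}\times[0,e]$, I would construct a point $(\bar x,\bar w)$ \emph{feasible} for \eqref{Eznorm-MPEC} with
\[
 f_{\sigma,\gamma}(\bar x)+\lambda{\textstyle\sum_{i=1}^n}\phi(\bar w_i)\ \le\ f_{\sigma,\gamma}(x)+\lambda{\textstyle\sum_{i=1}^n}\phi(w_i)+\overline{\rho}\lambda\langle e-w,|x|\rangle;
\]
since the left side is $\ge v^*$ by global optimality of $(x^*,w^*)$, chaining the two inequalities gives \eqref{aim-ineq}.

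To build $(\bar x,\bar w)$ I would first eliminate $w$ coordinatewise: for $w_i\in[0,1]$ one has $\phi(w_i)+\overline{\rho}(1-w_i)|x_i|\ge\min_{t\in[0,1]}[\phi(t)+\overline{\rho}|x_i|(1-t)]=\overline{\rho}|x_i|-\psi^*(\overline{\rho}|x_i|)$, where $\psi$ equals $\phi$ on $[0,1]$ and $+\infty$ elsewhere and $\psi^*$ is its conjugate; this is the identity that turns \eqref{Penalty-MPEC}, after minimizing out $w$, into the surrogate \eqref{Esurrogate}. Set $q(\omega):=\omega-\psi^*(\omega)$: then $q$ is concave, $q(0)=0$, $0\le q\le 1$, and $q\equiv 1$ on $[\phi_{-}'(1),\infty)$, so from these facts and $\phi(1)=1$, $\phi(t^*)=0$, $\tfrac{1}{1-t^*}\in\partial\phi(t_0)$ (hence $t^*\le t_0$) one gets $q(\omega)\ge\min\{1,c\,\omega\}$ for all $\omega\ge 0$, with $c:=\tfrac{1-t_0}{\phi_{-}'(1)(1-t^*)}$ the constant for which $\lambda c\,\overline{\rho}=\alpha_{\!f}\kappa$. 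Put $\mu:=1/(c\,\overline{\rho})$, $K:=\{i:|x_i|\ge\mu\}$, $s:=|K|$ (the degenerate case $s=0$ is handled directly, as then $\|x\|_1$ alone dominates $v^*$). For $s\ge1$ take $\bar x\in\Pi_{\Omega_{s}}(x)$ and let $\bar w$ minimize $\sum_i\phi(\cdot)$ over $\{w\in[0,e]:\langle e-w,|\bar x|\rangle=0\}$; then $(\bar x,\bar w)$ is feasible for \eqref{Eznorm-MPEC} and, by \eqref{phi-assump}, $\sum_i\phi(\bar w_i)=\|\bar x\|_0\le s$. The decisive bound is Lemma \ref{calmness} applied to $|x|\in\mathcal{S}$, which (using ${\rm dist}(x,\Omega_{s})={\rm dist}(|x|,\Omega_{s})$) yields $\|x-\bar x\|={\rm dist}(x,\Omega_{s})\le\kappa\big(\|\,|x|\,\|_1-\|\,|x|\,\|_{(s)}\big)\le\kappa{\textstyle\sum_{i\notin K}}|x_i|$, the last step because $\|\,|x|\,\|_1-\|\,|x|\,\|_{(s)}$ sums the $n-s$ smallest entries of $|x|$ while $|[n]\setminus K|=n-s$.

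Combining with $\|\bar x\|_0\le s$, the Lipschitz bound $f_{\sigma,\gamma}(x)\ge f_{\sigma,\gamma}(\bar x)-\alpha_{\!f}\|x-\bar x\|$ (Lemma \ref{lemma-critical}(i)) and $v^*\le f_{\sigma,\gamma}(\bar x)+\lambda s$, one obtains
\[
 \big[f_{\sigma,\gamma}(x)+\lambda{\textstyle\sum_i}\phi(w_i)+\overline{\rho}\lambda\langle e-w,|x|\rangle\big]-v^*\ \ge\ \big(\lambda c\,\overline{\rho}-\alpha_{\!f}\kappa\big){\textstyle\sum_{i\notin K}}|x_i|\ \ge\ 0,
\]
which is \eqref{aim-ineq}. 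Finally, for $\rho>\overline{\rho}$: by \eqref{aim-ineq}, for every $(x,w)\in\mathcal{S}\times[0,e]$ the \eqref{Penalty-MPEC}-objective equals $\big[f_{\sigma,\gamma}(x)+\lambda\sum_i\phi(w_i)+\overline{\rho}\lambda\langle e-w,|x|\rangle\big]+(\rho-\overline{\rho})\lambda\langle e-w,|x|\rangle\ge v^*+(\rho-\overline{\rho})\lambda\langle e-w,|x|\rangle\ge v^*$, with equality only if $\langle e-w,|x|\rangle=0$; hence the optimal value of \eqref{Penalty-MPEC} is $v^*$, its minimizers are exactly the feasible points of \eqref{Eznorm-MPEC} attaining $v^*$ — i.e.\ the global optima of \eqref{Eznorm-MPEC} — and conversely.

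The crux is the truncation step: the cut-off $\mu$ and the set $K$ must be calibrated so that the ``discount'' $\|x\|_0-\sum_i q(\overline{\rho}|x_i|)$ created by passing to the concave surrogate of $\|\cdot\|_0$ is paid for \emph{exactly} by the gain $\big(f_{\sigma,\gamma}(x)+\lambda\|x\|_0\big)-\big(f_{\sigma,\gamma}(\bar x)+\lambda\|\bar x\|_0\big)$ obtained by zeroing the tiny entries — the gain that the error bound of Lemma \ref{calmness} is tailored to control. Making the estimate $q(\omega)\ge\min\{1,c\omega\}$ sharp enough to recover exactly $\overline{\rho}=\tfrac{\kappa\phi_{-}'(1)(1-t^*)\alpha_{\!f}}{\lambda(1-t_0)}$ and dispatching the corner case $s=0$ are the fiddly parts; the rest is bookkeeping.
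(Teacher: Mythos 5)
Your proposal follows essentially the same route as the paper's proof: eliminate $w$ coordinatewise via $\min_{t\in[0,1]}\{\phi(t)+\omega(1-t)\}=\omega-\psi^*(\omega)$ at $\omega=\overline{\rho}|x_i|$, truncate $x$ to a point of $\Omega_{s}$ controlled by the error bound of Lemma \ref{calmness}, and close the loop with the Lipschitz bound on $f_{\sigma,\gamma}$ and the feasibility of the truncated pair; your cancellation $\lambda c\,\overline{\rho}=\alpha_{\!f}\kappa$ is exactly the paper's. The only real difference is that you obtain the scalar estimate $q(\omega)\ge\min\{1,c\omega\}$ directly from concavity of $q$ together with $q(0)=0$ and $q(\phi_{-}'(1))=1$, whereas the paper splits the small entries into the index sets $J_1,J_2$ and cites \cite[Lemma 1]{LiuBiPan18}; your version is self-contained and equally sharp. (Your choice $\bar w_i=t^*$ off the support is also slightly more careful than the paper's $\bar w_i=0$, which tacitly needs $\phi(0)=0$.)

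The step you should not dismiss is the corner case $s=0$, i.e. $\|x\|_\infty<\mu=\lambda/(\alpha_{\!f}\kappa)$. Your parenthetical claim that ``$\|x\|_1$ alone dominates $v^*$'' does not follow: in that case the argument only yields $f_{\sigma,\gamma}(x)+\alpha_{\!f}\kappa\|x\|_1\ge v^*$ as the target, and bounding $v^*\le f_{\sigma,\gamma}(\bar x)+\lambda$ with $\bar x\in\Pi_{\Omega_1}(x)$ gives $v^*\le f_{\sigma,\gamma}(x)+\alpha_{\!f}\kappa(\|x\|_1-\|x\|_\infty)+\lambda$, so one would need $\alpha_{\!f}\kappa\|x\|_\infty\ge\lambda$ --- which is precisely what fails when $s=0$. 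To be fair, the paper's proof has the identical blind spot: it invokes the error bound with $s=r=|J|$, and Lemma \ref{calmness} only covers $s\ge 1$ (indeed $\Omega_0=\emptyset$). Either one must assume $\lambda\le\alpha_{\!f}\kappa/\sqrt{n}$, which forces $s\ge 1$ for every $x\in\mathcal{S}$, or the threshold $\overline{\rho}$ has to be enlarged to absorb this case. Apart from this shared corner case, your argument is correct.
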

 \begin{proof}
  By Lemma \ref{calmness} and $\kappa=\max_{1\le s\le n}\kappa_s$,
  for each $s\in\{1,2,\ldots,n\}$ and any $z\in\mathcal{S}$,
  \begin{equation}\label{err-bound}
   {\rm dist}(z,\mathcal{S}\cap\mathcal{R}_s)\le\kappa\big[\|z\|_1-\|z\|_{(k)}\big].
  \end{equation}
  Fix any $(x,w)\in\mathcal{S}\times[0,e]$.
  Let $J=\big\{j\in[n]\,|\ \overline{\rho}|x|_j^{\downarrow}>\phi_{-}'(1)\big\}$ and
  $r=|J|$. By invoking \eqref{err-bound} for $s=r$ with $z=x$,
  there exists $x^{\overline{\rho}}\in\mathcal{S}\cap\mathcal{R}_r$ such that
  \begin{equation}\label{err-ineq1}
    \|x-x^{\overline{\rho}}\|\le\kappa\big[\|x\|_1-\|x\|_{(r)}\big]
    =\kappa{\textstyle\sum_{j=r+1}^n}|x|_j^{\downarrow}.
  \end{equation}
  Let $J_1=\!\big\{j\in[n]\,|\,\frac{1}{1-t^*}\!\le\!\overline{\rho}|x|_j^{\downarrow}\le\phi_{-}'(1)\big\}$
  and $J_2=\!\big\{j\in[n]\,|\,0\!\le\!\overline{\rho}|x|_j^{\downarrow}<\frac{1}{1-t^*}\big\}$.
  Note that
  \[
   {\textstyle\sum_{i=1}^n}\phi(w_i)+\overline{\rho}\big(\|x\|_1\!-\langle w,|x|\rangle\big)
   \ge{\textstyle \sum_{i=1}^n}\min_{t\in[0,1]}\big\{\phi(t)+\overline{\rho}|x|_i^{\downarrow}(1-t)\big\}.
  \]
  By invoking \cite[Lemma 1]{LiuBiPan18} with $\omega=|x|_j^{\downarrow}$
  for each $j$, it immediately follows that
  \[
   {\textstyle\sum_{i=1}^n}\phi(w_i)+\overline{\rho}\big(\|x\|_1\!-\langle w,x\rangle\big)
   \ge\|x^{\overline{\rho}}\|_0+\frac{\overline{\rho}(1\!-\!t_0)}{\phi_{-}'(1)(1\!-t^*)}
     \sum_{j\in J_1}\,|x|_j^{\downarrow}+\overline{\rho}(1\!-\!t_0)
     \sum_{j\in J_2}\,|x|_j^{\downarrow}.
  \]
  Notice that $1=\phi(1)=\phi(1)-\phi(t^*)\le\phi_{-}'(1)(1-t^*)$. From the last inequality, we have
  \begin{align}\label{temp-wxineq0}
   {\textstyle\sum_{i=1}^n}\phi(w_i)+\overline{\rho}\big(\|x\|_1\!-\langle w,x\rangle\big)
   \ge\|x^{\overline{\rho}}\|_0+\frac{\overline{\rho}(1-t_0)}{\phi_{-}'(1)(1\!-t^*)}
     \sum_{j\in J_1\cup J_2}|x|_j^{\downarrow}\nonumber\\
   =\|x^{\overline{\rho}}\|_0+\frac{\overline{\rho}(1-t_0)}{\phi_{-}'(1)(1\!-t^*)}
     \sum_{j=r+1}^n|x|_j^{\downarrow}
  \ge \|x^{\overline{\rho}}\|_0+\alpha_{\!f}\lambda^{-1}\|x-x^{\overline{\rho}}\|\qquad
  \end{align}
  where the last inequality is due to \eqref{err-ineq1} and the definition of $\overline{\rho}$.
  Since $x\in\mathcal{S}$ and $x^{\overline{\rho}}\in\mathcal{S}$, we have
  $f_{\sigma,\gamma}(x^{\overline{\rho}})-f_{\sigma,\gamma}(x)\le \alpha_{\!f}\|x-x^{\overline{\rho}}\|$.
  Together with the last inequality,
  \begin{equation}\label{temp-wxineq}
    {\textstyle\sum_{i=1}^n}\phi(w_i)+\overline{\rho}\big(\|x\|_1\!-\langle w,x\rangle\big)
    \ge \|x^{\overline{\rho}}\|_0+\lambda^{-1}\big[f_{\sigma,\gamma}(x^{\overline{\rho}})-f_{\sigma,\gamma}(x)\big].
  \end{equation}
  Now take $w_{i}^{\overline{\rho}}=1$ for $i\in{\rm supp}(x^{\overline{\rho}})$
  and $w_{i}^{\overline{\rho}}=0$ for $i\notin{\rm supp}(x^{\overline{\rho}})$.
  Clearly, $(x^{\overline{\rho}},w^{\overline{\rho}})$ is a feasible point
  of the MPEC \eqref{Eznorm-MPEC} with $\sum_{i=1}^n\phi(w_i^{\overline{\rho}})
  =\|x^{\overline{\rho}}\|_0$. Then, it holds that
  \[
   f_{\sigma,\gamma}(x^{\overline{\rho}})+\lambda\|x^{\overline{\rho}}\|_0
  \ge f_{\sigma,\gamma}(x^*) +\lambda{\textstyle\sum_{i=1}^n}\phi(w_i^*).
  \]
  Together with \eqref{temp-wxineq}, we obtain the inequality \eqref{aim-ineq}.
  Notice that $\langle e\!-\!w^*,|x^*|\rangle=0$. The inequality \eqref{aim-ineq}
  implies that every global optimal solution of \eqref{Eznorm-MPEC} is globally
  optimal to the problem \eqref{Penalty-MPEC} associated to every $\rho>\overline{\rho}$.
  Conversely, by fixing any $\rho>\overline{\rho}$ and letting $(\overline{x}^{\rho},\overline{w}^{\rho})$
  be a global optimal solution of the problem \eqref{Penalty-MPEC} associated to $\rho$, it holds that
  \begin{align*}
   &f_{\sigma,\gamma}(\overline{x}^{\rho})+\lambda\textstyle{\sum_{i=1}^n}\phi(\overline{w}_i^{\rho})
   +\rho\lambda\langle e-\overline{w}^{\rho},|\overline{x}^{\rho}|\rangle \\
   &\le f_{\sigma,\gamma}(x^*)+\lambda\textstyle{\sum_{i=1}^n}\phi(w_i^*)
   =f_{\sigma,\gamma}(x^*)+\lambda\textstyle{\sum_{i=1}^n}\phi(w_i^*)
    +\frac{\rho+\overline{\rho}}{2}\lambda\langle e-\overline{w}^{\rho},|\overline{x}^{\rho}|\rangle\\
   &\le f_{\sigma,\gamma}(\overline{x}^{\rho})+\lambda\textstyle{\sum_{i=1}^n}\phi(\overline{w}_i^{\rho})
      +\frac{\rho+\overline{\rho}}{2}\lambda\langle e-\overline{w}^{\rho},|\overline{x}^{\rho}|\rangle,
  \end{align*}
  which implies that $\frac{\rho-\overline{\rho}}{2}\lambda\langle e-\overline{w}^{\rho},|\overline{x}^{\rho}|\rangle\le 0$.
  Since $\rho>\overline{\rho}$ and $\langle e-\overline{w}^{\rho},|\overline{x}^{\rho}|\rangle\ge 0$,
  we obtain $\langle e-\overline{w}^{\rho},|\overline{x}^{\rho}|\rangle=0$.
  Together with the last inequality, it follows that $(\overline{x}^{\rho},\overline{w}^{\rho})$
  is a global optimal solution of \eqref{Eznorm-MPEC}.
  The second part then follows.
 \end{proof}

  By the definition of $\psi$, the penalty problem \eqref{Penalty-MPEC} can be rewritten in a compact form
  \begin{equation*}
  \min_{x\in\mathcal{S},w\in\mathbb{R}^n}\big\{f_{\sigma,\gamma}(x)+\lambda\textstyle{\sum_{i=1}^n}\psi(w_i)
  +\rho\lambda\langle e-w,|x|\rangle\big\},
  \end{equation*}
  which, by the definition of the conjugate function $\psi^*$, can be simplified to be \eqref{Esurrogate}.
  Then, Proposition \ref{partial-calm} implies that the problem \eqref{Esurrogate} associated
  to every $\phi\in\!\mathscr{L}$ and $\rho>\overline{\rho}$ is an equivalent surrogate
  of the problem \eqref{znorm-Moreau}. For a specific $\phi$, since $t^*,t_0$ and $\phi_{-}'(1)$
  are known, the threshold $\overline{\rho}$ is also known by Lemma \ref{calmness} though $\kappa=3\sqrt{n}$
  is a rough estimate.

  When $\phi$ is the one in Section \ref{sec1.2}, it is easy to verify that
  $\lambda\rho\varphi_{\rho}$ with $\lambda=\frac{(a+1)\nu^2}{2}$ and $\rho=\frac{2}{(a+1)\nu}$
  is exactly the SCAD function $x\mapsto\sum_{i=1}^n\rho_{\nu}(x_i)$ proposed in \cite{Fan01}.
  Since $t^*=0,t_0=1/2$ and $\phi_{-}'(1)=\frac{2a}{a+1}$ for this $\phi$,
  the SCAD function with $\nu<\frac{2}{(a+1)\overline{\rho}}$ is an equivalent
  surrogate of \eqref{znorm-Moreau}.
  When $\phi(t)=\frac{a^2}{4}t^2-\frac{a^2}{2}t+at+\frac{(a-2)^2}{4}\ (a>2)$ for $t\in\mathbb{R}$,
  \[
    \psi^*(\omega)=\left\{\begin{array}{cl}
                      -\frac{(a-2)^2}{4} & \textrm{if}\ \omega\leq  a-a^2/2,\\
                      \frac{1}{a^2}(\frac{a(a-2)}{2}+\omega)^2-\frac{(a-2)^2}{4}& \textrm{if}\  a-a^2/2 <\omega\leq a,\\
                      \omega-1 & \textrm{if}\ \omega>a.
                \end{array}\right.
  \]
  It is not hard to verify that the function $\lambda\rho\varphi_{\rho}$
  with $\lambda={a\nu^2}/{2}$ and $\rho={1}/{\nu}$ is exactly the one
  $x\mapsto\sum_{i=1}^ng_{\nu,b}(x_i)$ with $b=a$ used in \cite[Section 3.3]{HuangY18}.
  Since $t^*=\frac{a-2}{a},t_0=\frac{a-1}{a}$ and $\phi_{-}'(1)=a$ for this $\phi$,
  the MCP function used in \cite{HuangY18} with $\nu<1/{\overline{\rho}}$ and $b=a$
  is also an equivalent surrogate of the problem \eqref{znorm-Moreau}.
 \section{PG method with extrapolation}\label{sec4}
 \subsection{PG with extrapolation for solving \eqref{znorm-Moreau}}\label{sec4.1}

  Recall that $f_{\sigma,\gamma}$ is a smooth function whose gradient $\nabla\!f_{\sigma,\gamma}$
  is Lipschitz continuous with modulus $L_{\!f}\le\gamma^{-1}\|A\|^2$.
  While by Proposition \ref{proxm-glam} the proximal mapping of $g_{\lambda}$ has a closed form.
  This inspires us to apply the PG method with extrapolation to solving \eqref{znorm-Moreau}.
 \begin{algorithm}[H]
  \caption{\label{PGMe1}{\bf (PGe-znorm for solving the problem \eqref{znorm-Moreau})}}
  \textbf{Initialization:} Choose $\varsigma\in(0,1),0<\tau<(1\!-\!\varsigma)L_{\!f}^{-1},0<\beta_{\rm max}\le\frac{\sqrt{\varsigma(\tau^{-1}-L_{\!f})\tau^{-1}}}{2(\tau^{-1}+L_{\!f})}$ and
  an initial point $x^0\in\mathcal{S}$. Set $x^{-1}=x^{0}$ and $k:=0$.

  \medskip
  \noindent
 \textbf{while} the termination condition is not satisfied \textbf{do}
  \begin{itemize}
   \item[{\bf1.}] Let $\widetilde{x}^k=x^{k}+\beta_k(x^k-x^{k-1})$. Compute
                  $x^{k+1}\in\mathcal{P}_{\!\tau}g_{\lambda}(\widetilde{x}^k\!-\!\tau\nabla\!f_{\sigma,\gamma}(\widetilde{x}^k))$.

   \item[{\bf3.}] Choose $\beta_{k+1}\in[0,\beta_{\rm max}]$. Let $k\leftarrow k+1$ and go to Step 1.
  \end{itemize}
  \textbf{end (while)}
  \end{algorithm}
  \begin{remark}\label{remark1-PGMe1}
   The main computation work of Algorithm \ref{PGMe1} in each iteration is to seek
   \begin{equation}\label{xk-def}
    x^{k+1}\in\mathop{\arg\min}_{x\in\mathbb{R}^n}\Big\{\langle\nabla\!f_{\sigma,\gamma}(\widetilde{x}^k),x-\widetilde{x}^k\rangle
    +\frac{1}{2\tau}\|x-\widetilde{x}^k\|^2+g_{\lambda}(x)\Big\}.
   \end{equation}
   By Proposition \ref{proxm-glam}, to achieve a global optimal solution of
   the nonconvex problem \eqref{xk-def} requires about $2mn$ flops.
   Owing to the good performance of the Nesterov's acceleration strategy \cite{Nesterov83,Nesterov04},
   one can use this strategy to choose the extrapolation parameter $\beta_k$, i.e.,
   \begin{equation}\label{accelerate}
    \beta_k=\frac{t_{k-1}-1}{t_k}\ \ {\rm with}\ t_{k+1}=\frac{1}{2}\big(1+\!\sqrt{1+4t_k^2}\big)
    \ \ {\rm for}\ t_{-1}=t_{0}=1.
   \end{equation}
   In Algorithm \ref{PGMe1}, an upper bound $\beta_{\rm max}$ is imposed on $\beta_{k}$ just for
   the convergence analysis. It is easy to check that as $\varsigma$ approaches to $1$,
   say $\varsigma=0.999$, $\beta_{\rm max}$ can take $0.235$.
  \end{remark}

  The PG method with extrapolation, first proposed in \cite{Nesterov83} and
  extended to a general composite setting in \cite{Beck09}, is a popular
  first-order one for solving nonconvex nonsmooth composite optimization problems
  such as \eqref{znorm-Moreau} and \eqref{Esurrogate}. In the past several years,
  the PGe and its variants have received extensive attentions (see, e.g.,
  \cite{Ghadimi16,Li15,Wen17,Ochs14,Ochs19,Xu17,Yang21}). Due to the nonconvexity of
  the sphere constraint and the zero-norm, the results obtained in \cite{Ghadimi16,Wen17,Ochs14}
  are not applicable to \eqref{znorm-Moreau}. Although Algorithm \ref{PGMe1} is
  a special case of those studied in \cite{Li15,Xu17,Yang21}, the convergence results
  of \cite{Li15,Yang21} are obtained for the objective value sequence and the convergence
  result of \cite{Xu17} on the iterate sequence requires a strong restriction on $\beta_k$,
  i.e., it is such that the objective value sequence is nonincreasing.

  Next we provide the proof for the convergence and local convergence rate of
  the iterate sequence yielded by Algorithm \ref{PGMe1}.
  For any $\tau>0$ and $\varsigma\in(0,1)$, we define the function
  \begin{equation}\label{Htau-vsig}
    H_{\tau,\varsigma}(x,u):=F_{\sigma,\gamma}(x)+\frac{\varsigma}{4\tau}\|x-u\|^2
    \quad\ \forall (x,u)\in\mathbb{R}^n\times\mathbb{R}^n.
  \end{equation}
  The following lemma summarizes the properties of $H_{\tau,\zeta}$ on the sequence
  $\{x^k\}_{k\in\mathbb{N}}$.
 \begin{lemma}\label{lemma1-PGMe1}
  Let $\{x^k\}_{k\in\mathbb{N}}$ be the sequence generated by Algorithm \ref{PGMe1}. Then,
  \begin{itemize}
  \item[(i)] for each $k\in\mathbb{N}$,
              \(
                H_{\tau,\varsigma}(x^{k+1},x^k)\le H_{\tau,\varsigma}(x^{k},x^{k-1})
                  -\frac{\varsigma(\tau^{-1}-L_{\!f})}{2}\|x^{k+1}\!-\!x^k\|^2.
              \)

   \item[(ii)] The sequence $\{H_{\tau,\varsigma}(x^{k},x^{k-1})\}_{k\in\mathbb{N}}$ is convergent
                and $\sum_{k=1}^{\infty}\|x^{k+1}\!-\!x^k\|^2<\infty$.

   \item[(iii)] For each $k\in\mathbb{N}$, there exists $w^{k}\in\partial H_{\tau,\varsigma}(x^{k},x^{k-1})$
                with $\|w^{k+1}\|\le b_1\|x^{k+1}\!-\!x^k\|+b_2\|x^k\!-\!x^{k-1}\|$, where
                $b_1>0$ and $b_2>0$ are the constants independent of $k$.
  \end{itemize}
 \end{lemma}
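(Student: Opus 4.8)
The plan is to obtain all three parts from two facts established before this lemma: that $x^{k+1}$ is a global minimizer of the proximal subproblem \eqref{xk-def} (so it satisfies both a value‑comparison inequality and a first‑order inclusion), and that $\nabla\!f_{\sigma,\gamma}$ is globally Lipschitz with modulus $L_{\!f}$ by Lemma \ref{lemma-critical}(i), which makes the descent lemma for $f_{\sigma,\gamma}$ usable at the extrapolated point $\widetilde{x}^k$ even though $\widetilde{x}^k\notin\mathcal{S}$ in general. Throughout, $x^k\in\mathcal{S}$ for all $k$ since $x^0\in\mathcal{S}$ and $\mathcal{P}_{\!\tau}g_{\lambda}$ maps into $\mathcal{S}$.

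\textbf{Part (i).} First I would compare the value of \eqref{xk-def} at its minimizer $x^{k+1}$ with its value at the feasible point $x^k$, which gives
\[
 \langle\nabla\!f_{\sigma,\gamma}(\widetilde{x}^k),x^{k+1}\!-\!\widetilde{x}^k\rangle+\tfrac{1}{2\tau}\|x^{k+1}\!-\!\widetilde{x}^k\|^2+g_{\lambda}(x^{k+1})\le\langle\nabla\!f_{\sigma,\gamma}(\widetilde{x}^k),x^{k}\!-\!\widetilde{x}^k\rangle+\tfrac{1}{2\tau}\|x^{k}\!-\!\widetilde{x}^k\|^2+g_{\lambda}(x^{k}).
\]
Then I would apply the descent lemma for $f_{\sigma,\gamma}$ twice, at the pairs $(\widetilde{x}^k,x^{k+1})$ and $(\widetilde{x}^k,x^k)$, to lower bound $\langle\nabla\!f_{\sigma,\gamma}(\widetilde{x}^k),x^{k+1}\!-\!\widetilde{x}^k\rangle$ and upper bound $\langle\nabla\!f_{\sigma,\gamma}(\widetilde{x}^k),x^{k}\!-\!\widetilde{x}^k\rangle$ by function values of $f_{\sigma,\gamma}$; the $f_{\sigma,\gamma}(\widetilde{x}^k)$ terms cancel and, using $F_{\sigma,\gamma}=f_{\sigma,\gamma}+g_{\lambda}$, one is left with
\[
 F_{\sigma,\gamma}(x^{k+1})\le F_{\sigma,\gamma}(x^{k})+\tfrac{\tau^{-1}+L_{\!f}}{2}\|\widetilde{x}^k\!-\!x^k\|^2-\tfrac{\tau^{-1}-L_{\!f}}{2}\|x^{k+1}\!-\!\widetilde{x}^k\|^2,
\]
where $\tau^{-1}-L_{\!f}>0$ because $\tau<(1\!-\!\varsigma)L_{\!f}^{-1}$. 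Next I would substitute $\|\widetilde{x}^k\!-\!x^k\|=\beta_k\|x^k\!-\!x^{k-1}\|\le\beta_{\rm max}\|x^k\!-\!x^{k-1}\|$, bound $\|x^{k+1}\!-\!\widetilde{x}^k\|^2$ below by a Young/convex‑combination inequality in $\|x^{k+1}\!-\!x^k\|^2$ and $\|x^k\!-\!x^{k-1}\|^2$, and add $\tfrac{\varsigma}{4\tau}\|x^{k+1}\!-\!x^k\|^2$ to both sides so as to read off $H_{\tau,\varsigma}(x^{k+1},x^k)$ on the left and $H_{\tau,\varsigma}(x^k,x^{k-1})$ on the right. \emph{The hard part will be the parameter bookkeeping here}: one must verify that the explicit bound $\beta_{\rm max}\le\frac{\sqrt{\varsigma(\tau^{-1}-L_{\!f})\tau^{-1}}}{2(\tau^{-1}+L_{\!f})}$, together with $\tau<(1\!-\!\varsigma)L_{\!f}^{-1}$, forces the residual coefficient of $\|x^k\!-\!x^{k-1}\|^2$ to be nonpositive while leaving the residual coefficient of $\|x^{k+1}\!-\!x^k\|^2$ at most $-\tfrac{\varsigma(\tau^{-1}-L_{\!f})}{2}$; the precise shape of $\beta_{\rm max}$ is dictated exactly by this requirement.

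\textbf{Part (ii).} I would observe that $L_{\sigma,\gamma}\ge0$ and both $\delta_{\mathcal{S}}$ and $\lambda\|\cdot\|_0$ are nonnegative, so $H_{\tau,\varsigma}(x,u)\ge F_{\sigma,\gamma}(x)\ge0$ and hence $\{H_{\tau,\varsigma}(x^{k},x^{k-1})\}_{k\in\mathbb{N}}$ is bounded below; by (i) it is nonincreasing, hence convergent. Summing the inequality in (i) over $k=1,\ldots,N$ telescopes the $H$‑values and yields $\tfrac{\varsigma(\tau^{-1}-L_{\!f})}{2}\sum_{k=1}^{N}\|x^{k+1}\!-\!x^k\|^2\le H_{\tau,\varsigma}(x^{1},x^{0})-\lim_{k}H_{\tau,\varsigma}(x^{k},x^{k-1})<\infty$; letting $N\to\infty$ and using $\varsigma(\tau^{-1}-L_{\!f})/2>0$ gives $\sum_{k}\|x^{k+1}\!-\!x^k\|^2<\infty$.

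\textbf{Part (iii).} Since $H_{\tau,\varsigma}(x,u)=F_{\sigma,\gamma}(x)+\tfrac{\varsigma}{4\tau}\|x-u\|^2$ splits into a function of $x$ plus a smooth quadratic, Lemma \ref{lemma-critical}(ii) and the sum rule give $\partial H_{\tau,\varsigma}(x,u)=\big(\nabla\!f_{\sigma,\gamma}(x)+\mathcal{N}_{\mathcal{S}}(x)+\lambda\partial\|x\|_0+\tfrac{\varsigma}{2\tau}(x-u)\big)\times\{\tfrac{\varsigma}{2\tau}(u-x)\}$. The first‑order inclusion for the minimizer $x^{k+1}$ of \eqref{xk-def} reads $\tfrac{1}{\tau}(\widetilde{x}^k\!-\!x^{k+1})-\nabla\!f_{\sigma,\gamma}(\widetilde{x}^k)\in\mathcal{N}_{\mathcal{S}}(x^{k+1})+\lambda\partial\|x^{k+1}\|_0$, so the vector $w^{k+1}$ with first block
\[
 \nabla\!f_{\sigma,\gamma}(x^{k+1})-\nabla\!f_{\sigma,\gamma}(\widetilde{x}^k)+\tfrac{1}{\tau}(\widetilde{x}^k\!-\!x^{k+1})+\tfrac{\varsigma}{2\tau}(x^{k+1}\!-\!x^k)
\]
and second block $\tfrac{\varsigma}{2\tau}(x^k\!-\!x^{k+1})$ belongs to $\partial H_{\tau,\varsigma}(x^{k+1},x^k)$. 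Applying the triangle inequality, the $L_{\!f}$‑Lipschitz continuity of $\nabla\!f_{\sigma,\gamma}$, and $\|x^{k+1}\!-\!\widetilde{x}^k\|\le\|x^{k+1}\!-\!x^k\|+\beta_{\rm max}\|x^k\!-\!x^{k-1}\|$, I obtain $\|w^{k+1}\|\le b_1\|x^{k+1}\!-\!x^k\|+b_2\|x^k\!-\!x^{k-1}\|$ with $b_1=L_{\!f}+(1\!+\!\varsigma)\tau^{-1}$ and $b_2=(L_{\!f}+\tau^{-1})\beta_{\rm max}$, both independent of $k$, which is the claim.
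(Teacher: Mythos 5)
Your proposal is correct and takes essentially the same route as the paper's proof: the value comparison at the minimizer of the proximal subproblem combined with the descent lemma for part (i) (the paper expands $f_{\sigma,\gamma}$ around $x^k$ and controls $\nabla\!f_{\sigma,\gamma}(x^k)-\nabla\!f_{\sigma,\gamma}(\widetilde{x}^k)$ via Lipschitz continuity, whereas you expand around $\widetilde{x}^k$ twice — both yield the same estimate, and the parameter bookkeeping you flag does go through with the stated $\beta_{\rm max}$), lower boundedness plus telescoping for part (ii), and the identical explicit subgradient $w^{k+1}$ assembled from the first-order optimality condition of the subproblem for part (iii), with matching constants $b_1,b_2$.
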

 \begin{proof}
  {\bf(i)} Since $\nabla\!f_{\sigma,\gamma}$ is globally Lipschitz continuous,
  from the descent lemma we have
  \begin{equation}\label{fdecent}
    f_{\sigma,\gamma}(x')\le f_{\sigma,\gamma}(x)+\langle\nabla\!f_{\sigma,\gamma}(x),x'-x\rangle
    +({L_{\!f}}/{2})\|x'-x\|^2\quad\forall x',x\in\mathbb{R}^n.
  \end{equation}
  From the definition of $x^{k+1}$ or the equation \eqref{xk-def},
  for each $k\in\mathbb{N}$ it holds that
  \[
    \langle\nabla\!f_{\sigma,\gamma}(\widetilde{x}^k),x^{k+1}-x^k\rangle
    +\frac{1}{2\tau}\|x^{k+1}-\widetilde{x}^k\|^2+g_{\lambda}(x^{k+1})
    \le \frac{1}{2\tau}\|x^{k}-\widetilde{x}^k\|^2+g_{\lambda}(x^{k}).
  \]
  Together with the inequality \eqref{fdecent} for $x'=x^{k+1}$ and $x=x^k$,
  it follows that
  \begin{align*}
    f_{\sigma,\gamma}(x^{k+1})+g_{\lambda}(x^{k+1})
    &\le f_{\sigma,\gamma}(x^{k})+g_{\lambda}(x^{k})
       -\frac{1}{2\tau}\|x^{k+1}-\widetilde{x}^k\|^2
       +\frac{L_{\!f}}{2}\|x^{k+1}-x^k\|^2 \nonumber\\
    &\quad+\langle\nabla\!f_{\sigma,\gamma}(x^k)-\nabla\!f_{\sigma,\gamma}(\widetilde{x}^k),x^{k+1}-x^k\rangle
    +\frac{1}{2\tau}\|x^{k}-\widetilde{x}^k\|^2 \nonumber\\
    &=f_{\sigma,\gamma}(x^{k})+g_{\lambda}(x^{k}) -\frac{1}{2}(\tau^{-1}\!-\!L_{\!f})\|x^{k+1}-x^k\|^2\\
    &\quad -\frac{1}{\tau}\langle x^{k+1}\!-\!x^k,x^{k}\!-\!\widetilde{x}^k\rangle +\langle\nabla\!f_{\sigma,\gamma}(x^k)\!-\!\nabla\!f_{\sigma,\gamma}(\widetilde{x}^k),x^{k+1}\!-\!x^k\rangle.\nonumber
  \end{align*}
  Using $\widetilde{x}^k=x^{k}+\beta_k(x^k-x^{k-1})$ and the Lipschitz continuity
  of $\nabla\!f_{\sigma,\gamma}$ yields that
  \begin{align*}
   F_{\sigma,\gamma}(x^{k+1})
    &\le F_{\sigma,\gamma}(x^k) -\frac{\tau^{-1}\!-\!L_{\!f}}{2}\|x^{k+1}\!-\!x^k\|^2
        +(\tau^{-1}\!+\!L_{\!f})\beta_k\|x^{k+1}\!-\!x^k\|\|x^{k}\!-\!x^{k-1}\|\\
    &\le F_{\sigma,\gamma}(x^k) -\frac{\tau^{-1}\!-\!L_{\!f}}{4}\|x^{k+1}\!-\!x^k\|^2
        +\frac{(\tau^{-1}\!+\!L_{\!f})^2}{\tau^{-1}\!-\!L_{\!f}}\beta_k^2\|x^{k}\!-\!x^{k-1}\|^2\\
    &\le F_{\sigma,\gamma}(x^k) -\frac{\tau^{-1}\!-\!L_{\!f}}{4}\|x^{k+1}\!-\!x^k\|^2
        +\frac{\varsigma}{4\tau}\|x^{k}\!-\!x^{k-1}\|^2\\
    &=F_{\sigma,\gamma}(x^k) -\frac{(1\!-\!\varsigma)\tau^{-1}\!-\!L_{\!f}}{4}\|x^{k+1}\!-\!x^k\|^2
        -\frac{\varsigma}{4\tau}\|x^{k+1}\!-\!x^k\|^2+\frac{\varsigma}{4\tau}\|x^{k}\!-\!x^{k-1}\|^2
  \end{align*}
  where the second is due to $ab\le\frac{a^2}{4s}+b^2$ with $a=\|x^{k+1}\!-\!x^k\|$,
  $b=(\tau^{-1}\!+\!L_{\!f})\beta_k\|x^{k}\!-\!x^{k-1}\|$ and $s=\frac{1}{\tau^{-1}-L_{\!f}}>0$,
  and the last is due to $\beta_k\le\beta_{\rm max}\le\frac{\sqrt{\varsigma(\tau^{-1}-L_{\!f})\tau^{-1}}}{2(\tau^{-1}+L_{\!f})}$.
  Combining the last inequality with the definition of $H_{\tau,\varsigma}$,
  we obtain the result.

  \noindent
  {\bf(ii)} Note that $H_{\tau,\varsigma}$ is lower bounded by the lower boundedness of
  the function $F_{\sigma,\gamma}$. The nonincreasing of the sequence
  $\{H_{\tau,\varsigma}(x^{k},x^{k-1})\}_{k\in\mathbb{N}}$ in part (i) implies
  its convergence, and consequently, $\sum_{k=1}^{\infty}\|x^{k+1}\!-\!x^k\|^2<\infty$
  follows by using part (i) again.

  \noindent
  {\bf(iii)} From the definition of $H_{\tau,\varsigma}$ and \cite[Exercise 8.8]{RW98},
  for any $(x,u)\in\mathcal{S}\times\mathbb{R}^n$,
  \begin{equation}\label{subdiff1}
    \partial H_{\tau,\varsigma}(x,u)
    =\left(\begin{matrix}
     \partial F_{\sigma,\gamma}(x)+\frac{1}{2}\tau^{-1}\varsigma(x-u)\\
     \frac{1}{2}\tau^{-1}\varsigma(u-x)
    \end{matrix}\right).
  \end{equation}
  Fix any $k \in \mathbb{N}$. By the optimality of $x^{k+1}$ to the nonconvex problem \eqref{xk-def},
  it follows that
  \[
    0\in \nabla\!f_{\sigma,\gamma}(\widetilde{x}^k)+\tau^{-1}(x^{k+1}\!-\widetilde{x}^{k})+\partial h_{\lambda}(x^{k+1}),
  \]
  which is equivalent to
  \(
   \nabla\!f_{\sigma,\gamma}(x^{k+1})\!-\!\nabla\!f_{\sigma,\gamma}(\widetilde{x}^k)
   -\tau^{-1}(x^{k+1}\!-\!\widetilde{x}^{k})\in \partial F_{\sigma,\gamma}(x^{k+1}).
  \)
  Write
  \[
   w^{k}:=\left(\begin{matrix}
            \nabla\!f_{\sigma,\gamma}(x^{k})-\nabla\!f_{\sigma,\gamma}(\widetilde{x}^{k-1})
             -\tau^{-1}(x^{k}-\widetilde{x}^{k-1})+\frac{1}{2}\tau^{-1}\varsigma(x^{k}-x^{k-1})\\
             \frac{1}{2}\tau^{-1}\varsigma(x^{k-1}-x^{k})
           \end{matrix}\right).
  \]
 By comparing with \eqref{subdiff1}, we have $w^{k}\in\partial H_{\tau,\varsigma}(x^{k},x^{k-1})$.
 From the Lipschitz continuity of $\nabla\!f_{\sigma,\gamma}$ and Step 1,
 $\|w^{k+1}\|\le (\tau^{-1}\!+\!L_{\!f}\!+\!\tau^{-1}\varsigma)\|x^{k+1}\!-\!x^k\|
 +(\tau^{-1}\!+\!L_{\!f})\beta_{\rm max}\|x^{k}\!-\!x^{k-1}\|$.
 Since $\beta_{\rm max}\in(0,1)$, the result holds
 with $b_1=\tau^{-1}\!+\!L_{\!f}\!+\!\tau^{-1}\varsigma$ and $b_2=\tau^{-1}\!+\!L_{\!f}$.
 \end{proof}
 \begin{lemma}\label{lemma2-PGMe1}
  Let $\{x^k\}_{k\in\mathbb{N}}$ be the sequence generated by Algorithm \ref{PGMe1}
  and denote by $\varpi(x^0)$ the set of accumulation points of $\{x^k\}_{k\in\mathbb{N}}$.
  Then, the following assertions hold:
  \begin{itemize}
    \item [(i)] $\varpi(x^0)$ is a nonempty compact set and $\varpi(x^0)\subseteq
                 S_{\tau,g_{\lambda}}\subseteq{\rm crit}F_{\sigma,\gamma}$;

    \item [(ii)] $\lim_{k\to\infty}{\rm dist}((x^k,x^{k-1}),\Omega)=0$ with
                   $\Omega:=\{(x,x)\,|\,x\in\varpi(x^0)\}\subseteq{\rm crit}H_{\tau,\varsigma}$;

    \item[(iii)] the function $H_{\tau,\varsigma}$ is finite and keeps the constant on the set $\Omega$.
  \end{itemize}
 \end{lemma}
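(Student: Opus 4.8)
The plan is to combine the compactness of the unit sphere with the descent and summability estimates of Lemma \ref{lemma1-PGMe1} and a limiting argument through the proximal inclusion.

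For part (i), since every iterate $x^k$ stays in the compact set $\mathcal{S}$ (the proximal step keeps iterates on $\mathcal{S}$ by Proposition \ref{proxm-glam}), the sequence $\{x^k\}_{k\in\mathbb{N}}$ is bounded, so $\varpi(x^0)\ne\emptyset$; being the cluster‑point set of a bounded sequence, it is closed and bounded, hence compact. To show $\varpi(x^0)\subseteq S_{\tau,g_{\lambda}}$, fix $\overline{x}\in\varpi(x^0)$ and a subsequence $x^{k_j}\to\overline{x}$. By Lemma \ref{lemma1-PGMe1}(ii), $\|x^{k+1}-x^k\|\to 0$, hence $x^{k_j+1}\to\overline{x}$, and since $\beta_k\le\beta_{\rm max}$ also $\widetilde{x}^{k_j}=x^{k_j}+\beta_{k_j}(x^{k_j}-x^{k_j-1})\to\overline{x}$. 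By continuity of $\nabla\!f_{\sigma,\gamma}$, $z^{k_j}:=\widetilde{x}^{k_j}-\tau\nabla\!f_{\sigma,\gamma}(\widetilde{x}^{k_j})\to\overline{z}:=\overline{x}-\tau\nabla\!f_{\sigma,\gamma}(\overline{x})$. It remains to pass to the limit in $x^{k_j+1}\in\mathcal{P}_{\!\tau}g_{\lambda}(z^{k_j})$. Using $\overline{x}\in\mathcal{S}$ as a competitor gives $\tfrac{1}{2\tau}\|x^{k_j+1}-z^{k_j}\|^2+g_{\lambda}(x^{k_j+1})\le\tfrac{1}{2\tau}\|\overline{x}-z^{k_j}\|^2+g_{\lambda}(\overline{x})$; taking $\limsup$ and using lower semicontinuity of $g_{\lambda}$ forces $g_{\lambda}(x^{k_j+1})\to g_{\lambda}(\overline{x})$. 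Then for any $w\in\mathbb{R}^n$ the optimality inequality $\tfrac{1}{2\tau}\|x^{k_j+1}-z^{k_j}\|^2+g_{\lambda}(x^{k_j+1})\le\tfrac{1}{2\tau}\|w-z^{k_j}\|^2+g_{\lambda}(w)$ passes to the limit, yielding $\overline{x}\in\mathcal{P}_{\!\tau}g_{\lambda}(\overline{z})$, i.e.\ $\overline{x}\in S_{\tau,g_{\lambda}}$. The inclusion $S_{\tau,g_{\lambda}}\subseteq{\rm crit}F_{\sigma,\gamma}$ is exactly Lemma \ref{relation-critical}.

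For part (ii), since $\|x^k-x^{k-1}\|\to 0$, every cluster point of the bounded sequence $\{(x^k,x^{k-1})\}\subseteq\mathcal{S}\times\mathcal{S}$ has the form $(\overline{x},\overline{x})$ with $\overline{x}\in\varpi(x^0)$, and conversely each such pair is a cluster point; hence the cluster‑point set of $\{(x^k,x^{k-1})\}$ equals $\Omega$. For a sequence in a compact set, its distance to the cluster‑point set tends to $0$ (otherwise some subsequence stays $\varepsilon$‑away while admitting a convergent sub‑subsequence whose limit would be a cluster point at distance $\ge\varepsilon$, a contradiction), so ${\rm dist}((x^k,x^{k-1}),\Omega)\to 0$. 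Finally, for $\overline{x}\in\varpi(x^0)$ part (i) gives $0\in\partial F_{\sigma,\gamma}(\overline{x})$, so evaluating formula \eqref{subdiff1} at $(x,u)=(\overline{x},\overline{x})$ gives $0\in\partial H_{\tau,\varsigma}(\overline{x},\overline{x})$, i.e.\ $\Omega\subseteq{\rm crit}H_{\tau,\varsigma}$.

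For part (iii), at any $(\overline{x},\overline{x})\in\Omega$ we have $H_{\tau,\varsigma}(\overline{x},\overline{x})=F_{\sigma,\gamma}(\overline{x})$, which is finite because $\overline{x}\in\mathcal{S}$, $L_{\sigma,\gamma}$ is finite‑valued, and $\|\overline{x}\|_0\le n$. For constancy, Lemma \ref{lemma1-PGMe1}(ii) shows $\{H_{\tau,\varsigma}(x^k,x^{k-1})\}$ converges to some $\omega^*$; along the subsequence from part (i), $f_{\sigma,\gamma}(x^{k_j+1})\to f_{\sigma,\gamma}(\overline{x})$ by continuity, $g_{\lambda}(x^{k_j+1})\to g_{\lambda}(\overline{x})$ by the squeeze argument above, and $\tfrac{\varsigma}{4\tau}\|x^{k_j+1}-x^{k_j}\|^2\to 0$, so $H_{\tau,\varsigma}(x^{k_j+1},x^{k_j})\to F_{\sigma,\gamma}(\overline{x})$; as this subsequence also converges to $\omega^*$, we get $F_{\sigma,\gamma}(\overline{x})=\omega^*$ for every $\overline{x}\in\varpi(x^0)$, hence $H_{\tau,\varsigma}\equiv\omega^*$ on $\Omega$. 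The compactness and cluster‑point bookkeeping is routine; the one delicate point is passing to the limit through the \emph{nonconvex} proximal operator $\mathcal{P}_{\!\tau}g_{\lambda}$, specifically establishing $g_{\lambda}(x^{k_j+1})\to g_{\lambda}(\overline{x})$ from lower semicontinuity together with the subproblem optimality inequality, since this single fact underpins both the $\tau$‑stationarity conclusion in (i) and the value‑continuity needed in (iii).
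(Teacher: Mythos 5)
Your proof is correct and follows the same overall structure as the paper's. The one genuine difference is in how you pass to the limit through the nonconvex proximal operator in part (i): the paper notes that $g_{\lambda}$ is proximally bounded with threshold $+\infty$ and invokes the outer semicontinuity of $\mathcal{P}_{\!\tau}g_{\lambda}$ from \cite[Example 5.23]{RW98}, whereas you prove the needed closedness directly from the subproblem optimality inequality plus lower semicontinuity of $g_{\lambda}$. Your route is more self-contained and has a side benefit: it delivers the value convergence $g_{\lambda}(x^{k_j+1})\to g_{\lambda}(\overline{x})$ as a by-product, which you then reuse in part (iii); the paper instead re-derives $F_{\sigma,\gamma}(x^{k_j})\to F_{\sigma,\gamma}(x^*)$ there via the descent lemma and the same competitor trick. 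Both arguments are sound, and your handling of part (ii) (which the paper dismisses as ``immediate'') and of the inclusion $\Omega\subseteq{\rm crit}H_{\tau,\varsigma}$ via \eqref{subdiff1} is complete and correct.
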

 \begin{proof}
 {\bf (i)} Since $\{x^k\}_{k\in\mathbb{N}}\subseteq \mathcal{S}$, we have $\varpi(x^0)\ne\emptyset$.
 Since $\varpi(x^0)$ can be viewed as an intersection of compact sets, i.e.,
 $\varpi(x^0)=\bigcap_{q\in\mathbb{N}}\overline{\bigcup_{k\ge q}\{x^k\}}$, it is also compact.
 Now pick any $x^*\in\varpi(x^0)$. There exists a subsequence $\{x^{k_j}\}_{j\in\mathbb{N}}$
 with $x^{k_j}\rightarrow x^*$ as $j\rightarrow \infty$. Note that
 $\lim_{j\to\infty}\|x^{k_j}-x^{k_j-1}\|=0$ implied by Lemma \ref{lemma1-PGMe1} (ii).
 Then, $x^{k_j-1}\rightarrow x^*$ and $x^{k_j+1}\rightarrow x^*$ as $j\rightarrow \infty$.
 Recall that $\widetilde{x}^{k_j}=x^{k_j}+\beta_{k_j}(x^{k_j}-x^{k_j-1})$
 and $\beta_{k_j}\in[0,\beta_{\rm max})$. When $j\rightarrow \infty$,
 we have $\widetilde{x}^{k_j}\rightarrow x^*$ and then
 $\widetilde{x}^{k_j}\!-\!\tau\nabla\!f_{\sigma,\gamma}(\widetilde{x}^{k_j})
 \rightarrow x^*\!-\!\tau\nabla\!f_{\sigma,\gamma}(x^*)$.
 In addition, since $g_{\lambda}$ is proximally bounded with threshold $+\infty$,
 i.e., for any $\tau'>0$ and $x\in\mathbb{R}^n$,
 $\min_{z\in\mathbb{R}^n}\big\{\frac{1}{2\tau'}\|z-x\|^2+g_{\lambda}(z)\big\}>-\infty$,
 from \cite[Example 5.23]{RW98} it follows that $\mathcal{P}_{\!\tau}g_{\lambda}$
 is outer semicontinuous. Thus, from $x^{k_j+1}\in\mathcal{P}_{\!\tau}g_{\lambda}(\widetilde{x}^{k_j}\!-\!\tau\nabla\!f_{\sigma,\gamma}(\widetilde{x}^{k_j}))$
 for each $j\in\mathbb{N}$, we have $x^*\in\mathcal{P}_{\!\tau}g_{\lambda}(x^*\!-\!\tau\nabla\!f_{\sigma,\gamma}(x^*))$,
 and then $x^*\in S_{\tau,g_{\lambda}}$. By the arbitrariness of $x^*\in\varpi(x^0)$,
 the first inclusion follows. The second inclusion is given by Lemma \ref{relation-critical}.

 \noindent
 {\bf (ii)-(iii)} The result of part (ii) is immediate, so it suffices to prove part (iii).
 By Lemma \ref{lemma1-PGMe1} (i), the sequence $\{H_{\tau,\varsigma}(x^k,x^{k-1})\}_{k\in\mathbb{N}}$
 is convergent and denote its limit by $\omega^*$. Pick any $(x^*,x^*)\in\Omega$.
 There exists a subsequence $\{x^{k_j}\}_{j\in\mathbb{N}}$ with $x^{k_j}\rightarrow x^*$
 as $j\rightarrow \infty$. If $\lim_{j \rightarrow \infty}H_{\tau,\varsigma}(x^{k_j},x^{k_j-1})
 =H_{\tau,\varsigma}(x^*,x^*)$, then the convergence of $\{H_{\tau,\varsigma}(x^k,x^{k-1})\}_{k\in\mathbb{N}}$
 implies that $H_{\tau,\varsigma}(x^*,x^*)=\omega^*$, which by the arbitrariness of
 $(x^*,x^*)\in\Omega$ shows that the function $H_{\tau,\varsigma}$ is finite and
 keeps the constant on $\Omega$. Hence, it suffices to argue that
 $\lim_{j \rightarrow \infty}H_{\tau,\varsigma}(x^{k_j},x^{k_j-1})=H_{\tau,\varsigma}(x^*,x^*)$.
 Recall that $\lim_{j\to\infty}\|x^{k_j}-x^{k_j-1}\|=0$ by Lemma \ref{lemma1-PGMe1} (ii).
 We only need argue that $\lim_{j \rightarrow \infty}F_{\sigma,\gamma}(x^{k_j})=F_{\sigma,\gamma}(x^*)$.
 From \eqref{xk-def}, it holds that
 \[
    \langle\nabla\!f_{\sigma,\gamma}(\widetilde{x}^{k_j-1}),x^{k_j}-x^*\rangle
    +\frac{1}{2\tau}\|x^{k_j}-\widetilde{x}^{k_j-1}\|^2+g_{\lambda}(x^{k_j})
    \le \frac{1}{2\tau}\|x^*-\widetilde{x}^{k_j-1}\|^2+g_{\lambda}(x^*).
  \]
 Together with the inequality \eqref{fdecent} with $x'=x^{k_j}$ and $x=x^*$, we obtain that
 \begin{align*}
  F_{\sigma,\gamma}(x^{k_j})
  &\le F_{\sigma,\gamma}(x^*)+\langle\nabla\!f_{\sigma,\gamma}(x^*)-\nabla\!f_{\sigma,\gamma}(\widetilde{x}^{k_j-1}),x^{k_j}-x^*\rangle
    -\frac{1}{2\tau}\|x^{k_j}-\widetilde{x}^{k_j-1}\|^2\\
  &\quad +\frac{1}{2\tau}\|x^*-\widetilde{x}^{k_j-1}\|^2+\frac{L_{\!f}}{2}\|x^{k_j}-x^*\|^2,
 \end{align*}
 which by $\lim_{j\rightarrow \infty}x^{k_j}=x^*=\lim_{j\rightarrow \infty}x^{k_j-1}$ implies that
 $\limsup_{j\rightarrow\infty}F_{\sigma,\gamma}(x^{k_j})\le F_{\sigma,\gamma}(x^*)$.
 In addition, by the lower semicontinuity of $F_{\sigma,\gamma}$,
 $\liminf_{j\rightarrow\infty}F_{\sigma,\gamma}(x^{k_j})\ge F_{\sigma,\gamma}(x^*)$.
 The two sides show that $\lim_{j \rightarrow \infty}F_{\sigma,\gamma}(x^{k_j})=F_{\sigma,\gamma}(x^*)$.
 The proof is then completed.
 \end{proof}

 Since $f_{\sigma,\gamma}$ is a piecewise linear-quadratic function, it is semi-algebraic.
 Recall that the zero-norm is semi-algebraic. Hence, $F_{\sigma,\gamma}$ and $H_{\tau,\varsigma}$
 are also semi-algebraic, and then the KL functions. By using Lemma \ref{lemma1-PGMe1}-\ref{lemma2-PGMe1}
 and following the arguments as those for \cite[Theorem 1]{Bolte14} and \cite[Theorem 2]{Attouch09}
 we can establish the following convergence results.
 \begin{theorem}\label{global-conv}
  Let $\{x^k\}_{k\in\mathbb{N}}$ be the sequence generated by Algorithm \ref{PGMe1}.
  Then,
 \begin{itemize}
 \item[(i)] $\sum_{k=1}^{\infty}\|x^{k+1}-x^k\|<\infty$ and consequently $\{x^k\}_{k\in \mathbb{N}}$
             converges to some $x^*\in S_{\tau,g_{\lambda}}$.

 \item[(ii)] If $F_{\sigma,\gamma}$ is a KL function of exponent $1/2$,
             then there exist $c_1>0$ and $\varrho\in(0,1)$ such that
             for all sufficiently large $k$, $\|x^k-x^*\|\le c_1\varrho^k$.
 \end{itemize}
 \end{theorem}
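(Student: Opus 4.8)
The plan is to run the standard Kurdyka--\L ojasiewicz (KL) argument for descent schemes obeying a sufficient-decrease and a relative-error inequality, along the lines of \cite[Theorem 1]{Bolte14} and \cite[Theorem 2]{Attouch09}, but applied to the lifted potential $H_{\tau,\varsigma}$ on $\mathbb{R}^{n}\times\mathbb{R}^{n}$ rather than to $F_{\sigma,\gamma}$ directly, because the extrapolation makes the natural relative-error bound in Lemma \ref{lemma1-PGMe1}(iii) depend on \emph{two} consecutive steps. Since $f_{\sigma,\gamma}$ is piecewise linear-quadratic and $\|\cdot\|_{0}$ is semi-algebraic, both $F_{\sigma,\gamma}$ and $H_{\tau,\varsigma}$ are semi-algebraic, hence KL functions; combined with Lemma \ref{lemma2-PGMe1}, which identifies the accumulation set $\Omega$ as a compact subset of ${\rm crit}\,H_{\tau,\varsigma}$ on which $H_{\tau,\varsigma}$ is finite and constant, say with value $H^{*}$, and shows ${\rm dist}((x^{k},x^{k-1}),\Omega)\to 0$, we may invoke the uniformized KL property of $H_{\tau,\varsigma}$ over $\Omega$ (cf. \cite[Lemma 6]{Bolte14}): there are $\varepsilon,\eta>0$ and a desingularizing function $\varphi$ with $\varphi'(H_{\tau,\varsigma}(x,u)-H^{*})\,{\rm dist}(0,\partial H_{\tau,\varsigma}(x,u))\ge 1$ whenever ${\rm dist}((x,u),\Omega)<\varepsilon$ and $H^{*}<H_{\tau,\varsigma}(x,u)<H^{*}+\eta$.

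For (i), write $r_{k}:=H_{\tau,\varsigma}(x^{k},x^{k-1})-H^{*}\ge 0$; by Lemma \ref{lemma1-PGMe1}(i)--(ii), $r_{k}$ is nonincreasing with $r_{k}\downarrow 0$. If $r_{k_{0}}=0$ for some $k_{0}$, Lemma \ref{lemma1-PGMe1}(i) forces $x^{k}=x^{k_{0}}$ for all $k\ge k_{0}$ and the claim is immediate; so assume $r_{k}>0$ for all $k$, in which case $(x^{k},x^{k-1})$ eventually satisfies the hypotheses of the uniformized KL inequality. Chaining, for all large $k$, the concavity of $\varphi$, the KL inequality, the sufficient decrease $r_{k}-r_{k+1}\ge a\|x^{k+1}-x^{k}\|^{2}$ (Lemma \ref{lemma1-PGMe1}(i), $a:=\tfrac{\varsigma(\tau^{-1}-L_{\!f})}{2}$), and the relative error $\|w^{k}\|\le b_{1}\|x^{k}-x^{k-1}\|+b_{2}\|x^{k-1}-x^{k-2}\|$ (Lemma \ref{lemma1-PGMe1}(iii)) gives
\[
  \varphi(r_{k})-\varphi(r_{k+1})\ \ge\ \frac{r_{k}-r_{k+1}}{\|w^{k}\|}\ \ge\ \frac{a\,\|x^{k+1}-x^{k}\|^{2}}{b_{1}\|x^{k}-x^{k-1}\|+b_{2}\|x^{k-1}-x^{k-2}\|}.
\]
Taking square roots and applying the elementary inequality $\sqrt{\alpha\beta}\le\epsilon\alpha+(4\epsilon)^{-1}\beta$ with $\epsilon$ small enough that the resulting coefficient of the step-lengths is $\tfrac16$, one obtains $\|x^{k+1}-x^{k}\|\le\tfrac16\big(\|x^{k}-x^{k-1}\|+\|x^{k-1}-x^{k-2}\|\big)+M\big(\varphi(r_{k})-\varphi(r_{k+1})\big)$ for a suitable $M>0$; summing over $k\ge k_{0}$, telescoping the $\varphi$-terms (using $\varphi(r_{k})\to\varphi(0)=0$) and absorbing the step-length sums on the right into the left (possible since $\tfrac13<1$) yields $\sum_{k}\|x^{k+1}-x^{k}\|<\infty$. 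Hence $\{x^{k}\}$ is Cauchy and converges to some $x^{*}\in\varpi(x^{0})\subseteq S_{\tau,g_{\lambda}}$ by Lemma \ref{lemma2-PGMe1}(i).

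For (ii), note that when $F_{\sigma,\gamma}$ has KL exponent $1/2$ so does $H_{\tau,\varsigma}$ at each $(x^{*},x^{*})\in\Omega$: from the product form $\partial H_{\tau,\varsigma}(x,u)=\big(\partial F_{\sigma,\gamma}(x)+\tfrac{\varsigma}{2\tau}(x-u)\big)\times\{\tfrac{\varsigma}{2\tau}(u-x)\}$ in \eqref{subdiff1}, one bounds both ${\rm dist}(0,\partial F_{\sigma,\gamma}(x))$ and $\|x-u\|$ by a constant times ${\rm dist}(0,\partial H_{\tau,\varsigma}(x,u))$, whence (using $F_{\sigma,\gamma}(x^{*})=H^{*}$) $(H_{\tau,\varsigma}(x,u)-H^{*})^{1/2}\le(F_{\sigma,\gamma}(x)-H^{*})_{+}^{1/2}+(\tfrac{\varsigma}{4\tau})^{1/2}\|x-u\|\le C\,{\rm dist}(0,\partial H_{\tau,\varsigma}(x,u))$ near $(x^{*},x^{*})$ (cf. \cite{LiPong18}). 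So $\varphi(t)=ct^{1/2}$, and the KL inequality gives $\sqrt{r_{k}}\le\tfrac{c}{2}\|w^{k}\|\le\tfrac{c}{2}\max(b_{1},b_{2})\big(\|x^{k}-x^{k-1}\|+\|x^{k-1}-x^{k-2}\|\big)$. Set $\Delta_{k}:=\sum_{i\ge k}\|x^{i+1}-x^{i}\|$ (finite by (i)); then $\|x^{k}-x^{k-1}\|+\|x^{k-1}-x^{k-2}\|=\Delta_{k-2}-\Delta_{k}$, and inserting this into the summed estimate from (i), which reads $\Delta_{k}\le\tfrac13\Delta_{k-2}+M\varphi(r_{k})$, produces $\Delta_{k}\le\tfrac13\Delta_{k-2}+\nu(\Delta_{k-2}-\Delta_{k})$ for a constant $\nu>0$, i.e. $\Delta_{k}\le\varrho_{0}\Delta_{k-2}$ with $\varrho_{0}:=\tfrac{1/3+\nu}{1+\nu}\in(0,1)$. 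Iterating this two-step contraction over the even and odd index classes gives $\Delta_{k}\le c_{1}\varrho^{k}$ with $\varrho:=\sqrt{\varrho_{0}}$, and since $\|x^{k}-x^{*}\|\le\Delta_{k}$, the claim follows.

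The main obstacle is the bookkeeping forced by the inertial term: because Lemma \ref{lemma1-PGMe1}(iii) controls $\|w^{k}\|$ by the two preceding step-lengths, the KL estimate must be fed into a summation argument to extract a genuine contraction, and in (ii) one must use the exact identity $\|x^{k}-x^{k-1}\|+\|x^{k-1}-x^{k-2}\|=\Delta_{k-2}-\Delta_{k}$ (rather than a crude bound by $\Delta_{k-2}$) to get a factor strictly below $1$. The other point needing care is verifying that the KL exponent $1/2$ transfers from $F_{\sigma,\gamma}$ to the lifted potential $H_{\tau,\varsigma}$.
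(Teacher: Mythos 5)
Your proposal is correct and follows essentially the same route as the paper: the uniformized KL property of the lifted potential $H_{\tau,\varsigma}$ over the accumulation set, combined with the sufficient-decrease and two-step relative-error bounds of Lemma \ref{lemma1-PGMe1}, yields summability of the step lengths, and the exponent-$1/2$ case gives the two-step contraction $\Delta_k\le\varrho_0\Delta_{k-2}$ exactly as in the paper's proof. The only cosmetic difference is that you verify the transfer of the KL exponent $1/2$ from $F_{\sigma,\gamma}$ to $H_{\tau,\varsigma}$ directly from the product structure of $\partial H_{\tau,\varsigma}$, whereas the paper simply cites \cite[Theorem 3.6]{LiPong18}; both give the same estimate.
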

 \begin{proof}
 {\bf(i)} For each $k\in\mathbb{N}$, write $z^k\!:=(x^k,x^{k-1})$.
 Since $\{x^k\}_{k\in\mathbb{N}}$ is bounded, there exists a subsequence $\{x^{k_q}\}_{q\in\mathbb{N}}$
 with $x^{k_q}\to\overline{x}$ as $q\to\infty$. By the proof of Lemma \ref{lemma2-PGMe1} (iii),
 $\lim_{k\to\infty}H_{\tau,\varsigma}(z^k)=H_{\tau,\varsigma}(\overline{z})$ with
 $\overline{z}=(\overline{x},\overline{x})$. If there exists $\overline{k}\in\mathbb{N}$
 such that $H_{\tau,\varsigma}(z^{\overline{k}})=H_{\tau,\varsigma}(\overline{z})$,
 by Lemma \ref{lemma1-PGMe1} (i) we have $x^{k}=x^{\overline{k}}$ for all $k\ge\overline{k}$
 and the result follows. Thus, it suffices to consider that
 $H_{\tau,\varsigma}(z^{k})>H_{\tau,\varsigma}(\overline{z})$ for all $k\in\mathbb{N}$.
 Since $\lim_{k\to\infty}H_{\tau,\varsigma}(z^k)=H_{\tau,\varsigma}(\overline{z})$,
 for any $\eta>0$ there exists $k_0\in\mathbb{N}$ such that for all $k\ge k_0$,
 $H_{\tau,\varsigma}(z^k)< H_{\tau,\varsigma}(\overline{z})+\eta$.
 In addition, from Lemma \ref{lemma2-PGMe1} (ii),
 for any $\varepsilon>0$ there exists $k_1\in\mathbb{N}$ such that for all $k\ge k_1$,
 ${\rm dist}(z^k,\Omega)<\varepsilon$. Then, for all $k\ge\overline{k}:=\max(k_0,k_1)$,
 \[
   z^k\in\big\{z\,|\,{\rm dist}(z,\Omega)<\varepsilon\big\}
   \cap[H_{\tau,\varsigma}(\overline{z})<H_{\tau,\varsigma}<H_{\tau,\varsigma}(\overline{z})+\eta].
 \]
 By combining Lemma \ref{lemma2-PGMe1} (iii) and \cite[Lemma 6]{Bolte14},
 there exist $\delta>0$, $\eta>0$ and a continuous concave function
 $\varphi\!:[0,\eta)\to\mathbb{R}_{+}$ satisfying the conditions in Definition \ref{KL-def}
 such that for all $\overline{z}\in\Omega$ and
 all $z\in\big\{z\,|\,{\rm dist}(z,\Omega)<\varepsilon\big\}
 \cap[H_{\tau,\varsigma}(\overline{z})<H_{\tau,\varsigma}<H_{\tau,\varsigma}(\overline{z})+\eta]$,
 \[
   \varphi'(H_{\tau,\varsigma}(z)-H_{\tau,\varsigma}(\overline{z}))
    {\rm dist}(0,\partial H_{\tau,\varsigma}(z))\ge 1.
 \]
 Consequently, for all $k>\overline{k}$,
 $\varphi'(H_{\tau,\varsigma}(z^k)-H_{\tau,\varsigma}(\overline{z})){\rm dist}(0,\partial H_{\tau,\varsigma}(z^k))\ge 1$.
 By Lemma \ref{lemma1-PGMe1} (iii), there exists $w^{k}\in\partial H_{\tau,\varsigma}(z^k)$
 with $\|w^k\|\le b_1\|x^{k}-x^{k-1}\|+b_2\|x^{k-1}-x^{k-2}\|$. Then,
 \[
   \varphi'(H_{\tau,\varsigma}(z^k)-H_{\tau,\varsigma}(\overline{z}))\|w^k\|\ge 1.
 \]
 Together with the concavity of $\varphi$ and Lemma \ref{lemma1-PGMe1} (i), it follows that
 for all $k>\overline{k}$,
 \begin{align*}
  & [\varphi(H_{\tau,\varsigma}(z^k)-H_{\tau,\varsigma}(\overline{z}))-
   \varphi(H_{\tau,\varsigma}(z^{k+1})-H_{\tau,\varsigma}(\overline{z}))]\|w_k\|\\
  &\ge \varphi'(H_{\tau,\varsigma}(z^k)-H_{\tau,\varsigma}(\overline{z}))
       [H_{\tau,\varsigma}(z^k)-H_{\tau,\varsigma}(z^{k+1})]\|w_k\|\\
  &\ge H_{\tau,\varsigma}(z^k)-H_{\tau,\varsigma}(z^{k+1})\ge a\|x^{k+1}-x^k\|^2
  \ \ {\rm with}\ a={\varsigma(\tau^{-1}\!-\!L_{\!f})}/{2}.
 \end{align*}
 For each $k\in\mathbb{N}$, let $\Delta_k:=\varphi(H_{\tau,\varsigma}(z^k)-H_{\tau,\varsigma}(\overline{z}))-
 \varphi(H_{\tau,\varsigma}(z^{k+1})-H_{\tau,\varsigma}(\overline{z}))$. For all $k>\overline{k}$,
 \begin{align*}
  2\|x^{k+1}-x^k\|&\le 2\sqrt{a^{-1}\Delta_k\|w_k\|}
  \le 2\sqrt{a^{-1}\Delta_k[b_1\|x^{k}\!-\!x^{k-1}\|+b_2\|x^{k-1}\!-\!x^{k-2}\|]}\\
  &\le \frac{1}{2}\big(\|x^{k}-x^{k-1}\|+\|x^{k-1}-x^{k-2}\|\big)+2a^{-1}\max(b_1,b_2)\Delta_k,
 \end{align*}
 where the second inequality is due to $2\sqrt{st}\le s/2+2t$ for any $s,t\ge 0$.
 For any $\nu>k>\overline{k}$, summing the last inequality from $k$ to $\nu$ yields that
 \begin{equation}\label{use-ineq2}
  \sum_{j=k}^{\nu}\|x^{j+1}\!-\!x^j\|\le \|x^k-x^{k-1}\|+\frac{1}{2}\|x^{k-1}\!-\!x^{k-2}\|
   +\frac{2\max(b_1,b_2)}{a}\varphi(H_{\tau,\varsigma}(z^{k})-H_{\tau,\varsigma}(\overline{z})).
 \end{equation}
 By passing the limit $\nu\to\infty$ to the last inequality, we obtain the desired result.

 \noindent
 {\bf(ii)} Since $F_{\sigma,\gamma}$ is a KL function of exponent $1/2$,
 by \cite[Theorem 3.6]{LiPong18} and the expression of $H_{\tau,\varsigma}$,
 it follows that $H_{\tau,\varsigma}$ is also a KL function of exponent $1/2$.
 From the arguments for part (i) with $\varphi(t)=c\sqrt{t}$ for $t\ge 0$
 and Lemma \ref{lemma1-PGMe1} (iii), for all $k\ge\overline{k}$ it holds that
 \[
   \sqrt{H_{\tau,\varsigma}(z^k)-H_{\tau,\varsigma}(\overline{z})}
   \le\frac{c}{2}{\rm dist}(0,\partial H_{\tau,\varsigma}(z^k))
   \le\frac{c}{2}[b_1\|x^{k}-x^{k-1}\|+b_2\|x^{k-1}-x^{k-2}\|].
 \]
 Consequently, $\varphi(H_{\tau,\varsigma}(z^k)-H_{\tau,\varsigma}(\overline{z}))
 \le \frac{c^2}{2}[b_1\|x^{k}-x^{k-1}\|+b_2\|x^{k-1}-x^{k-2}\|]$.
 Together with the inequality \eqref{use-ineq2}, by letting $c'=c^2a^{-1}[\max(b_1,b_2)]^2$,
 for any $\nu>k>\overline{k}$ we have
  \[
  {\textstyle\sum_{j=k}^{\nu}}\|x^{j+1}-x^j\|\le (1+c')\|x^k-x^{k-1}\|+(1/2+c')\|x^{k-1}-x^{k-2}\|
  \]
  For each $k\in\mathbb{N}$, let $\Delta_k:=\sum_{j=k}^{\infty}\|x^{j+1}-x^j\|$.
  Passing the limit $\nu\to +\infty$ to this inequality, we obtain
  \(
    \Delta_k\le (1+c')[\Delta_{k-1}-\Delta_{k}]+(1/2+c')[\Delta_{k-2}-\Delta_{k-1}]
    \le (1+c')[\Delta_{k-2}-\Delta_{k}],
  \)
  which means that $\Delta_k\le\varrho\Delta_{k-2}$ for $\varrho=\frac{1+c'}{2+c'}$.
  The result follows by this recursion.
 \end{proof}

  It is worthwhile to point out that by Lemma \ref{lemma1-PGMe1}-\ref{lemma2-PGMe1}
  and the proof of Lemma \ref{lemma2-PGMe1} (iii), applying \cite[Theorem 10]{Ochs19}
  directly can yield $\sum_{k=1}^{\infty}\|x^{k+1}-x^k\|<\infty$.
  Here, we include its proof just for the convergence rate analysis in
  Theorem \ref{global-conv} (ii). Notice that Theorem \ref{global-conv} (ii) requires
  the KL property of exponent $1/2$ of the function $F_{\sigma,\gamma}$. The following
  lemma shows that $F_{\sigma,\gamma}$ indeed has such an important property under a mild condition.
 \begin{lemma}\label{KL-exponent}
  If any $\overline{x}\in\!{\rm crit}F_{\sigma,\gamma}$ has
  $\Gamma(\overline{x})\!=\emptyset$, then $F_{\sigma,\gamma}$ is a KL function of exponent $0$.
 \end{lemma}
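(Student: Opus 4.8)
The plan is to show that under the stated condition, $F_{\sigma,\gamma}$ has the KL property of exponent $0$ at every critical point, which by Remark \ref{KL-remark}(b) is enough to conclude it is a KL function of exponent $0$. By the same remark, at any noncritical point the KL property of exponent $0$ holds automatically, so the whole issue is the behaviour at points $\overline{x}\in{\rm crit}F_{\sigma,\gamma}$. The key observation is that the KL property of exponent $0$ at $\overline{x}$ is equivalent to $\overline{x}$ being an isolated point of the level set $[F_{\sigma,\gamma}=F_{\sigma,\gamma}(\overline{x})]$ in a neighborhood, or more precisely to the existence of a neighborhood $\mathcal{U}$ on which $F_{\sigma,\gamma}(x)\ne F_{\sigma,\gamma}(\overline{x})$ whenever $x\ne\overline{x}$; equivalently, on a punctured neighborhood either $F_{\sigma,\gamma}(x)>F_{\sigma,\gamma}(\overline{x})$ (so the interval $[F(\overline x)<F<F(\overline x)+\eta]$ is locally empty for small $\eta$) — and then the defining inequality holds vacuously.

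First I would use the hypothesis $\Gamma(\overline{x})=\emptyset$ together with the explicit piecewise definition of $\vartheta_{\sigma,\gamma}$ in \eqref{Lsig-gam}: $\Gamma(\overline{x})=\emptyset$ means no coordinate of $A\overline{x}$ lies in the two ``quadratic transition'' intervals $[-\gamma,0]$ and $[-\sigma-\gamma,\gamma-\sigma]$, hence each coordinate $(A\overline{x})_i$ lies in the open region where $\vartheta_{\sigma,\gamma}$ is affine (either $0$ on $t>0$, or $-t-\gamma/2$ on $-\sigma+\gamma<t<-\gamma$, or the constant $\sigma-\gamma/2$ on $t<-(\sigma+\gamma)$). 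By continuity, the same holds on a neighborhood of $\overline{x}$ in $\mathcal{S}$, so on that neighborhood $f_{\sigma,\gamma}=L_{\sigma,\gamma}(A\cdot)$ coincides with an affine function $x\mapsto\langle c,x\rangle+d$ of $x$. Thus locally $F_{\sigma,\gamma}(x)=\langle c,x\rangle+d+\lambda\|x\|_0$ on $\mathcal{S}$. Next I would use that $\overline{x}$ is a critical point: by Lemma \ref{lemma-critical}(ii) and \eqref{subdiff-sphere}, $0\in c+\mathcal{N}_{\mathcal{S}}(\overline{x})+\lambda\partial\|\overline{x}\|_0$, which forces $c_i=0$ for every $i\in{\rm supp}(\overline{x})$ and $c_{{\rm supp}(\overline{x})}=-\beta\overline{x}_{{\rm supp}(\overline{x})}$ simultaneously — so in fact $c$ restricted to the support is parallel to $\overline x$ there, and $c$ off the support is whatever it is. The decisive point is the zero-norm term: for $x$ near $\overline{x}$ on $\mathcal{S}$, $\|x\|_0\ge\|\overline{x}\|_0$ unless some support coordinate vanishes, and one shows that any $x$ near $\overline x$ with strictly smaller support must be at distance bounded below (since the nonzero entries of $\overline x$ are bounded away from $0$), so in a small enough punctured neighborhood $\|x\|_0\ge\|\overline{x}\|_0$, with equality exactly when ${\rm supp}(x)={\rm supp}(\overline{x})$. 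On that set $F_{\sigma,\gamma}(x)-F_{\sigma,\gamma}(\overline{x})=\langle c,x-\overline{x}\rangle$, and restricting to the sphere through the constant-support stratum one checks, using the criticality relation on the support, that this difference is $\ge 0$ (and the coordinates with $c_i\ne 0$ only occur off the support, where moving increases $\|x\|_0$). Hence $F_{\sigma,\gamma}(x)\ge F_{\sigma,\gamma}(\overline{x})$ on a full neighborhood, so for sufficiently small $\eta>0$ the set $\mathcal{U}\cap[F_{\sigma,\gamma}(\overline{x})<F_{\sigma,\gamma}<F_{\sigma,\gamma}(\overline{x})+\eta]$ is empty, and the KL inequality of exponent $0$ (with $\varphi(t)=ct$) holds trivially.

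I would then assemble these facts: the KL property of exponent $0$ holds at every critical point (vacuously, because no points of the relevant sublevel slice exist nearby), hence by Remark \ref{KL-remark}(b) at every point of ${\rm dom}\,\partial F_{\sigma,\gamma}$, so $F_{\sigma,\gamma}$ is a KL function of exponent $0$.

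The main obstacle I anticipate is the combinatorial bookkeeping around the zero-norm near $\overline{x}$: one must carefully rule out that perturbations of $\overline x$ on the sphere either (a) decrease the support and thereby decrease $F_{\sigma,\gamma}$, or (b) keep the support but decrease $\langle c,x\rangle$. Ruling out (a) uses that $|{\overline x}|_{\rm nz}>0$ so a genuinely smaller support forces a jump in distance — a localization argument — while ruling out (b) needs the criticality condition $0\in c+\mathcal{N}_{\mathcal{S}}(\overline x)+\lambda\partial\|\overline x\|_0$ to pin down the behavior of $c$ on ${\rm supp}(\overline x)$ and exploit that off-support moves are penalized. Getting the affine-region claim uniform on a neighborhood (via $\Gamma(\overline x)=\emptyset$ plus continuity of $x\mapsto A x$) is routine, as is the final invocation of Remark \ref{KL-remark}(b); the delicate part is the interplay between the linear term and the discrete $\|\cdot\|_0$ term on the sphere.
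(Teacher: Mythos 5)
Your high-level strategy matches the paper's: use $\Gamma(\overline{x})=\emptyset$ to make $f_{\sigma,\gamma}$ affine near $\overline{x}$, use the $\lambda\|\cdot\|_0$ term to force ${\rm supp}(x)={\rm supp}(\overline{x})=:J$ for any nearby $x$ in a thin slice above the value $F_{\sigma,\gamma}(\overline{x})$, and then use criticality to show that slice is empty. But the decisive step is not carried out, and as stated it is wrong in sign. Writing $c=\nabla f_{\sigma,\gamma}(\overline{x})=A^{\mathbb{T}}\nabla L_{\sigma,\gamma}(A\overline{x})$, the criticality relation gives $c_J=\overline{\alpha}\,\overline{x}_J$ with $\overline{\alpha}=\langle A\overline{x},\nabla L_{\sigma,\gamma}(A\overline{x})\rangle=-\sum_{i\in I_2}(A\overline{x})_i\ge 0$, where $I_2$ collects the indices on the decreasing affine piece of $\vartheta_{\sigma,\gamma}$ (so $[\nabla L_{\sigma,\gamma}]_{I_2}=-e$ and $(A\overline{x})_i<-\gamma<0$ there). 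Hence for $x\in\mathcal{S}$ with ${\rm supp}(x)=J$ one gets $\langle c,x-\overline{x}\rangle=\overline{\alpha}\big(\langle\overline{x}_J,x_J\rangle-1\big)\le 0$, i.e.\ $F_{\sigma,\gamma}(x)\le F_{\sigma,\gamma}(\overline{x})$ — the \emph{opposite} of the ``$\ge 0$'' you assert. You never determine the sign of the multiplier, and that sign is exactly where the structure of the loss enters; it is what the paper extracts, in disguised form, from $0\le{\rm dist}^2(0,\partial F_{\sigma,\gamma}(x))=\|A_J^{\mathbb{T}}\nabla L_{\sigma,\gamma}(Ax)\|^2-\langle Ax,\nabla L_{\sigma,\gamma}(Ax)\rangle^2$. (Your intermediate claim that criticality ``forces $c_i=0$ on the support'' is also incorrect: it forces $c_J$ parallel to $\overline{x}_J$, not zero.)

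The second, independent problem is the final inference: ``$F_{\sigma,\gamma}\ge F_{\sigma,\gamma}(\overline{x})$ on a neighborhood, hence $\mathcal{U}\cap[F_{\sigma,\gamma}(\overline{x})<F_{\sigma,\gamma}<F_{\sigma,\gamma}(\overline{x})+\eta]=\emptyset$'' is a non sequitur — any strict local minimizer of a smooth function satisfies the premise but not the conclusion for any $\eta>0$. Likewise, your opening claim that KL of exponent $0$ at $\overline{x}$ is \emph{equivalent} to $F_{\sigma,\gamma}$ avoiding the value $F_{\sigma,\gamma}(\overline{x})$ on a punctured neighborhood is not correct; emptiness of the slice $[F_{\sigma,\gamma}(\overline{x})<F_{\sigma,\gamma}<F_{\sigma,\gamma}(\overline{x})+\eta]$ is merely a sufficient condition, and it is the one actually needed. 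The correct assembly is a dichotomy, not a one-sided bound: either ${\rm supp}(x)\supsetneq J$, in which case the extra $\lambda$ in the zero-norm (plus continuity of $f_{\sigma,\gamma}$) forces $F_{\sigma,\gamma}(x)>F_{\sigma,\gamma}(\overline{x})+\lambda/2$, or ${\rm supp}(x)=J$, in which case the corrected sign gives $F_{\sigma,\gamma}(x)\le F_{\sigma,\gamma}(\overline{x})$; only then is the slice with $\eta\le\lambda/4$ empty and the KL inequality with $\varphi(t)=ct$ vacuously satisfied. With these two repairs your argument becomes a valid (and somewhat more transparent) variant of the paper's proof, but as written it does not go through.
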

 \begin{proof}
  Write $\widetilde{f}_{\sigma,\gamma}(x):=f_{\sigma,\gamma}(x)+\delta_{\mathcal{S}}(x)$
  for $x\in\mathbb{R}^n$. For any $x\in\mathcal{S}$, by \cite[Exercise 8.8]{RW98},
  \begin{equation}\label{subdiff-Theta}
   \partial\!\widetilde{f}_{\sigma,\gamma}(x)=\nabla\!f_{\sigma,\gamma}(x)+\mathcal{N}_{\mathcal{S}}(x)
   =A^{\mathbb{T}}\nabla\!L_{\sigma,\gamma}(Ax)+\mathcal{N}_{\mathcal{S}}(x).
  \end{equation}
  Fix any $\overline{x}\in{\rm crit}F_{\sigma,\gamma}$ with $\Gamma(\overline{x})=\emptyset$.
  Let $J:={\rm supp}(\overline{x}),I_{0}\!:=\{i\in[m]\,|\, (A\overline{x})_i>0\}$,
  $I_{1}\!:=\{i\in[m]\,|\, (A\overline{x})_i<-(\sigma\!+\!\gamma)\}$
  and $I_{2}\!:=\!\{i\in[m]\,|\, -\sigma\!+\!\gamma<(A\overline{x})_i<-\gamma\}$.
  Since $\Gamma(\overline{x})=\emptyset$, we have $I_{0}\cup I_{1}\cup I_{2}=[m]$. Moreover, from the continuity,
  there exists $\varepsilon'>0$ such that for all $x\in\mathbb{B}(\overline{x},\varepsilon')$,
  ${\rm supp}(x)\supseteq J$ and the following inequalities hold:
  \begin{equation}\label{temp-indx}
   (Ax)_i>0\ {\rm for}\ i\in I_{0},\ (Ax)_i<-(\sigma\!+\!\gamma)\ {\rm for}\ i\in I_{1}\ {\rm and}\
   \gamma\!-\!\sigma<(Ax)_i<-\gamma\ {\rm for}\ i\in I_{2}.
  \end{equation}
  By the continuity of $f_{\sigma,\gamma}$,
  there exists $\varepsilon''>0$ such that for all $x\in\mathbb{B}(\overline{x},\varepsilon'')$,
  $f_{\sigma,\gamma}(x)>f_{\sigma,\gamma}(\overline{x})-\lambda/2$.
  Set $\varepsilon=\min(\varepsilon',\varepsilon'')$ and pick any $\eta\in(0,\lambda/4]$.
  Next we argue that
  \[
   \mathbb{B}(\overline{x},\varepsilon)
  \cap[F_{\sigma,\gamma}(\overline{x})<F_{\sigma,\gamma}<F_{\sigma,\gamma}(\overline{x})+\eta]=\emptyset,
  \]
  which by Definition \ref{KL-def} implies that $F_{\sigma,\gamma}$
  is a KL function of exponent $1/2$. Suppose on the contradiction that
  there exists $x\in\mathbb{B}(\overline{x},\varepsilon)
  \cap[F_{\sigma,\gamma}(\overline{x})<F_{\sigma,\gamma}<F_{\sigma,\gamma}(\overline{x})+\eta]$.
  From $F_{\sigma,\gamma}(x)<F_{\sigma,\gamma}(\overline{x})+\eta$, we have $x\in\mathcal{S}$.
  Together with ${\rm supp}(x)\supseteq J$, we deduce that ${\rm supp}(x)=J$ (if not,
  $f_{\sigma,\gamma}(x)+\lambda\|\overline{x}\|_0+\lambda<F_{\sigma,\gamma}(x)
  <f_{\sigma,\gamma}(\overline{x})+\lambda\|\overline{x}\|_0+\eta$,
  which along with $f_{\sigma,\gamma}(x)>f_{\sigma,\gamma}(\overline{x})-\lambda/2$ implies
  $\eta>\lambda/2$, a contradiction to $\eta\le\lambda/4$).
  Now from $x\in\mathcal{S}$, equation \eqref{temp-indx},
  the expression of $\vartheta_{\sigma,\gamma}$ and $I_{0}\cup I_{1}\cup I_{2}=[m]$,
  it follows that
  \begin{equation}\label{Phi-gamma}
   0<F_{\sigma,\gamma}(x)-F_{\sigma,\gamma}(\overline{x})
   =L_{\sigma,\gamma}(Ax)-L_{\sigma,\gamma}(A\overline{x})={\textstyle\sum_{i\in I_{2}}}[(A\overline{x})_i-(Ax)_i].
 \end{equation}
 Recall that $[\nabla\!L_{\sigma,\gamma}(Ax')]_{I_{2}}=-e$ and $[\nabla\!L_{\sigma,\gamma}(Ax')]_{I_{0}\cup I_{1}}=0$
 with $x'=x$ and $\overline{x}$. Hence,
 \begin{align}\label{temp-equa31}
  \|A_{J}^{\mathbb{T}}\nabla\!L_{\sigma,\gamma}(Ax)\|^2
  =\|A_{I_{2}J}^{\mathbb{T}}e\|^2,\
  \langle \nabla\!L_{\sigma,\gamma}(Ax),Ax\rangle^2
  =[{\textstyle\sum_{i\in I_{2}}}(Ax)_i]^2,\\
  \label{temp-equa32}
  \|A_{J}^{\mathbb{T}}\nabla\!L_{\sigma,\gamma}(A\overline{x})\|^2
  =\|A_{I_{2}J}^{\mathbb{T}}e\|^2,\
  \langle \nabla\!L_{\sigma,\gamma}(A\overline{x}),A\overline{x}\rangle^2
  =[{\textstyle\sum_{i\in I_{2}}}(A\overline{x})_i]^2.
 \end{align}
 By comparing \eqref{subdiff-Theta} with Lemma \ref{lemma-critical} (ii), we have
 $\partial F_{\sigma,\gamma}(x) =\partial\!\widetilde{f}_{\sigma,\gamma}(x)+\lambda\partial\|x\|_0$.
 Since ${\rm supp}(x)=J$, we also have $\partial\|x\|_0=\{v\in\mathbb{R}^n\,|\,v_i=0\ {\rm for}\ i\in J\}$.
 Then, it holds that
 \begin{align}\label{temp-equa33}
  {\rm dist}^2(0, \partial F_{\sigma,\gamma}(x))
  &=\min_{u\in\partial\!\widetilde{f}_{\sigma,\gamma}(x),v\in \lambda\partial\|x\|_0}\|u+v\|^2
  =\min_{u\in\partial\!\widetilde{f}_{\sigma,\gamma}(x)}\|u_{J}\|^2\nonumber\\
  &=\min_{\alpha\in\mathbb{R}}\|A_{J}^{\mathbb{T}}\nabla\!L_{\sigma,\gamma}(Ax)+\alpha x_{J}\|^2\nonumber\\
  &=\min_{\alpha\in\mathbb{R}}\alpha^2+2\langle Ax,\nabla\!L_{\sigma,\gamma}(Ax)\rangle\alpha+\|A_{J}^{\mathbb{T}}\nabla\!L_{\sigma,\gamma}(Ax)\|^2\nonumber\\
  &=\|A_{J}^{\mathbb{T}}\nabla\!L_{\sigma,\gamma}(Ax)\|^2-\langle Ax,\nabla\!L_{\sigma,\gamma}(Ax)\rangle^2.
 \end{align}
  Since $0\in\partial F_{\sigma,\gamma}(\overline{x})=\nabla\!f_{\sigma,\gamma}(\overline{x})
  +\mathcal{N}_{\mathcal{S}}(\overline{x})+\lambda\partial\|\overline{x}\|_0$,
  from the expression of $\partial\|\overline{x}\|_0$ we have
  \[
   A_{J}^{\mathbb{T}}\nabla\!L_{\sigma,\gamma}(A\overline{x})=\overline{\alpha}\,\overline{x}_{J}
   \ \ {\rm with}\ \ \overline{\alpha}=\langle A\overline{x},\nabla\!L_{\sigma,\gamma}(A\overline{x})\rangle.
  \]
  Together with the equations \eqref{temp-equa31}-\eqref{temp-equa33}, it immediately follows that
 \begin{align*}
  0&\le {\rm dist}^2(0, \partial F_{\sigma,\gamma}(x))
   =\|A_{J}^{\mathbb{T}}\nabla\!L_{\sigma,\gamma}(Ax)\|^2-\langle Ax,\nabla\!L_{\sigma,\gamma}(Ax)\rangle^2
   -\|A_{J}^{\mathbb{T}}\nabla\!L_{\sigma,\gamma}(A\overline{x})-\overline{\alpha}\,\overline{x}_{J}\|^2\\
  &=\|A_{J}^{\mathbb{T}}\nabla\!L_{\sigma,\gamma}(Ax)\|^2-\|A_{J}^{\mathbb{T}}\nabla\!L_{\sigma,\gamma}(A\overline{x})\|^2
    +\big[\langle\nabla\!L_{\sigma,\gamma}(A\overline{x}),A\overline{x}\rangle^2
     \!-\!\langle \nabla\!L_{\sigma,\gamma}(Ax),Ax\rangle^2\big]\nonumber\\
  &={\textstyle\sum_{i\in I_{2}}}[(A\overline{x})_i-(Ax)_i]\cdot{\textstyle\sum_{i\in I_{2}}}[(A\overline{x})_i+(Ax)_i].
 \end{align*}
 Since ${\textstyle\sum_{i\in I_{2}}}[(A\overline{x})_i+(Ax)_i]<0$,
 the last inequality implies that ${\textstyle\sum_{i\in I_{2}}}[(A\overline{x})_i-(Ax)_i]\le 0$,
 which is a contradiction to the inequality \eqref{Phi-gamma}.
 The proof is then completed.
 \end{proof}
 \begin{remark}\label{remark2-PGMe1}
  By the definition of $\Gamma(\overline{x})$, when $\gamma$ is small enough, 
  it is highly possible for $\Gamma(\overline{x})=\emptyset$ and then 
  for $F_{\sigma,\gamma}$ to be a KL function of exponent $0$. 
 \end{remark}
 \subsection{PG with extrapolation for solving \eqref{Esurrogate}}\label{sec4.2}

  By the proof of Lemma \ref{lemma-critical}, $\Xi_{\sigma,\gamma}$ is a smooth function
  and $\nabla\Xi_{\sigma,\gamma}$ is globally Lipschitz continuous with
  Lipschitz constant $L_{\Xi}\le \gamma^{-1}\|A\|^2+\!\lambda\rho^2\max(\frac{a+1}{2},\frac{a+1}{2(a-1)})$.
  While by Proposition \ref{proxm-hlam}, the proximal mapping of $h_{\lambda,\rho}$ has a closed form.
  This motivates us to apply the PG method with extrapolation to solving the problem \eqref{Esurrogate}.
 \begin{algorithm}[H]
  \caption{\label{PGMe2}{\bf (PGe-scad for solving the problem \eqref{Esurrogate})}}
  \textbf{Initialization:} Choose $\varsigma\in(0,1),0<\tau<(1\!-\!\varsigma)L_{\Xi}^{-1},0<\beta_{\rm max}\le{\frac{\sqrt{\varsigma(\tau^{-1}-L_{\Xi})\tau^{-1}}}{2(\tau^{-1}+L_{\Xi})}}$ and
  an initial point $x^0\in\mathcal{S}$. Set $x^{-1}=x^{0}$ and $k:=0$.

  \medskip
  \noindent
 \textbf{while} the termination condition is not satisfied \textbf{do}
  \begin{itemize}
   \item[{\bf1.}] Let $\widetilde{x}^k=x^{k}+\beta_k(x^k-x^{k-1})$ and compute $x^{k+1}\in\mathcal{P}_{\!\tau}h_{\lambda,\rho}(\widetilde{x}^k\!-\!\tau\nabla\Xi_{\sigma,\gamma}(\widetilde{x}^k))$.

   \item[{\bf3.}] Choose $\beta_{k+1}\in[0,\beta_{\rm max}]$. Let $k\leftarrow k+1$ and go to Step 1.
  \end{itemize}
  \textbf{end (while)}
  \end{algorithm}
  
  Similar to Algorithm \ref{PGMe1}, the extrapolation parameter
 $\beta_k$ in Algorithm \ref{PGMe2} can be chosen in terms of
 the rule in \eqref{accelerate}. For any $\tau>0$ and $\varsigma\in(0,1)$,
 we define the potential function
 \begin{equation}\label{Upsion}
    \Upsilon_{\!\tau,\varsigma}(x,u):=G_{\sigma,\gamma,\rho}(x)+\frac{\varsigma}{4\tau}\|x-u\|^2
    \quad\ \forall (x,u)\in\mathbb{R}^n\times\mathbb{R}^n.
  \end{equation}
  Then, by following the same arguments as those for Lemma \ref{lemma1-PGMe1} and \ref{lemma2-PGMe1},
  we can establish the following properties of $\Upsilon_{\!\tau,\varsigma}$ on
  the sequence $\{x^k\}_{k\in\mathbb{N}}$ generated by Algorithm \ref{PGMe2}.
 \begin{lemma}\label{lemma1-PGMe2}
  Let $\{x^k\}_{k\in\mathbb{N}}$ be the sequence generated by Algorithm \ref{PGMe2}
  and denote by $\pi(x^0)$ the set of accumulation points of $\{x^k\}_{k\in\mathbb{N}}$.
  Then, the following assertions hold.
  \begin{itemize}
  \item[(i)] For each $k\in\mathbb{N}$,
              \(
                \Upsilon_{\!\tau,\varsigma}(x^{k+1},x^k)\le \Upsilon_{\!\tau,\varsigma}(x^{k},x^{k-1})
                  -\frac{\varsigma(\tau^{-1}-L_{\Xi})}{2}\|x^{k+1}\!-\!x^k\|^2.
              \)
             Consequently, $\{\Upsilon_{\!\tau,\varsigma}(x^{k},x^{k-1})\}_{k\in\mathbb{N}}$ is convergent
                and $\sum_{k=1}^{\infty}\|x^{k+1}\!-\!x^k\|^2<\infty$.

   \item[(ii)] For each $k\in\mathbb{N}$, there exists $w^{k}\in\partial\Upsilon_{\!\tau,\varsigma}(x^{k},x^{k-1})$
                with $\|w^{k+1}\|\le b_1'\|x^{k+1}\!-\!x^k\|+b_2'\|x^k\!-\!x^{k-1}\|$, where
                $b_1'>0$ and $b_2'>0$ are the constants independent of $k$.

\item [(iii)] $\pi(x^0)$ is a nonempty compact set and $\pi(x^0)\subseteq S_{\tau,h_{\lambda,\rho}}$.

\item [(iv)] $\lim_{k\to\infty}{\rm dist}((x^k,x^{k-1}),\pi(x^0)\times \pi(x^0))=0$,
             and $\Upsilon_{\!\tau,\varsigma}$ is finite and keeps the constant
             on the set $\pi(x^0)\times \pi(x^0)$.
  \end{itemize}
 \end{lemma}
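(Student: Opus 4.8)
The plan is to transcribe the proofs of Lemmas~\ref{lemma1-PGMe1} and \ref{lemma2-PGMe1}, replacing $f_{\sigma,\gamma}$ by $\Xi_{\sigma,\gamma}$, $g_{\lambda}$ by $h_{\lambda,\rho}$, $L_{\!f}$ by $L_{\Xi}$, $F_{\sigma,\gamma}$ by $G_{\sigma,\gamma,\rho}$, and $H_{\tau,\varsigma}$ by $\Upsilon_{\!\tau,\varsigma}$ (see \eqref{Upsion}). The four ingredients making this substitution legitimate are all at hand: (a) by the proof of Lemma~\ref{lemma-critical}(iii), $\Xi_{\sigma,\gamma}$ is $C^1$ with $\nabla\Xi_{\sigma,\gamma}$ globally Lipschitz of modulus $L_{\Xi}$; (b) $\mathcal{P}_{\!\tau}h_{\lambda,\rho}$ has the closed form of Proposition~\ref{proxm-hlam}; (c) $G_{\sigma,\gamma,\rho}$ is bounded below, since $L_{\sigma,\gamma}\ge 0$ and, using $\psi^*(\omega)\le\omega$ for $\omega\ge 0$ (immediate from \eqref{psi-star}, or from $\psi\ge 0$ on $[0,1]$), one has $\rho\varphi_{\rho}(x)=\sum_{i=1}^n(\rho|x_i|-\psi^*(\rho|x_i|))\ge 0$; and (d) $h_{\lambda,\rho}=\delta_{\mathcal{S}}+\lambda\rho\|\cdot\|_1\ge 0$ is proximally bounded with threshold $+\infty$, so $\mathcal{P}_{\!\tau}h_{\lambda,\rho}$ is outer semicontinuous by \cite[Example 5.23]{RW98}.

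For part (i), I would apply the descent lemma to $\Xi_{\sigma,\gamma}$ at $x'=x^{k+1}$, $x=x^k$, add the optimality inequality for the subproblem defining $x^{k+1}$, substitute $\widetilde{x}^k=x^k+\beta_k(x^k-x^{k-1})$, use $ab\le\frac{a^2}{4s}+b^2$ with $s=(\tau^{-1}-L_{\Xi})^{-1}$, and finally invoke $\beta_k\le\beta_{\rm max}\le\frac{\sqrt{\varsigma(\tau^{-1}-L_{\Xi})\tau^{-1}}}{2(\tau^{-1}+L_{\Xi})}$; this reproduces the stated decrease inequality verbatim, and lower-boundedness of $\Upsilon_{\!\tau,\varsigma}$ (from (c)) then yields convergence of $\{\Upsilon_{\!\tau,\varsigma}(x^k,x^{k-1})\}$ and $\sum_{k}\|x^{k+1}-x^k\|^2<\infty$. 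For part (ii), I would compute $\partial\Upsilon_{\!\tau,\varsigma}$ via \cite[Exercise 8.8]{RW98}, note that optimality of $x^{k+1}$ gives $0\in\nabla\Xi_{\sigma,\gamma}(\widetilde{x}^k)+\tau^{-1}(x^{k+1}-\widetilde{x}^k)+\partial h_{\lambda,\rho}(x^{k+1})$, hence $\nabla\Xi_{\sigma,\gamma}(x^{k+1})-\nabla\Xi_{\sigma,\gamma}(\widetilde{x}^k)-\tau^{-1}(x^{k+1}-\widetilde{x}^k)\in\partial G_{\sigma,\gamma,\rho}(x^{k+1})$, and assemble $w^{k}\in\partial\Upsilon_{\!\tau,\varsigma}(x^{k},x^{k-1})$ exactly as in Lemma~\ref{lemma1-PGMe1}(iii); the Lipschitz continuity of $\nabla\Xi_{\sigma,\gamma}$ and $\beta_{\rm max}\le 1$ then deliver the bound with $b_1'=\tau^{-1}+L_{\Xi}+\tau^{-1}\varsigma$ and $b_2'=\tau^{-1}+L_{\Xi}$.

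For part (iii), compactness of $\mathcal{S}$ gives $\pi(x^0)\ne\emptyset$, and $\pi(x^0)=\bigcap_{q\in\mathbb{N}}\overline{\bigcup_{k\ge q}\{x^k\}}$ is compact; for any $x^*\in\pi(x^0)$ along a subsequence $x^{k_j}\to x^*$, the summability from (i) forces $x^{k_j-1}\to x^*$, $x^{k_j+1}\to x^*$ and $\widetilde{x}^{k_j}\to x^*$, whence $\widetilde{x}^{k_j}-\tau\nabla\Xi_{\sigma,\gamma}(\widetilde{x}^{k_j})\to x^*-\tau\nabla\Xi_{\sigma,\gamma}(x^*)$; passing to the limit in $x^{k_j+1}\in\mathcal{P}_{\!\tau}h_{\lambda,\rho}(\widetilde{x}^{k_j}-\tau\nabla\Xi_{\sigma,\gamma}(\widetilde{x}^{k_j}))$ and using the outer semicontinuity from (d) gives $x^*\in S_{\tau,h_{\lambda,\rho}}$ (which is $\subseteq{\rm crit}\,G_{\sigma,\gamma,\rho}$ by Lemma~\ref{relation-critical}). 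Finally, for part (iv), the distance claim is immediate from $\|x^k-x^{k-1}\|\to 0$; for constancy, that same fact gives $\Upsilon_{\!\tau,\varsigma}(x^{k_j},x^{k_j-1})-G_{\sigma,\gamma,\rho}(x^{k_j})\to 0$, so it remains to prove $G_{\sigma,\gamma,\rho}(x^{k_j})\to G_{\sigma,\gamma,\rho}(x^*)$. The upper-semicontinuity direction $\limsup_{j}G_{\sigma,\gamma,\rho}(x^{k_j})\le G_{\sigma,\gamma,\rho}(x^*)$ I would obtain, following Lemma~\ref{lemma2-PGMe1}(iii), from the subproblem optimality of $x^{k_j}$ compared against $x^*$ together with the descent lemma for $\Xi_{\sigma,\gamma}$; the reverse inequality $\liminf_j G_{\sigma,\gamma,\rho}(x^{k_j})\ge G_{\sigma,\gamma,\rho}(x^*)$ follows from lower semicontinuity of $G_{\sigma,\gamma,\rho}$; convergence of the whole sequence $\{\Upsilon_{\!\tau,\varsigma}(x^k,x^{k-1})\}$ then forces it to equal the common value on $\pi(x^0)\times\pi(x^0)$. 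I do not anticipate a genuine obstacle here, since the argument is a line-by-line replay of Section~\ref{sec4.1}; the only points deserving care are verifying the lower- and proximal-boundedness in (c)/(d) and, in (iv), recognizing that the continuity of $G_{\sigma,\gamma,\rho}$ along the subsequence needs the prox-inequality trick rather than lower semicontinuity alone.
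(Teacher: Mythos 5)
Your proposal is correct and coincides with the paper's treatment: the paper states this lemma without a separate proof, asserting exactly the substitution $f_{\sigma,\gamma}\to\Xi_{\sigma,\gamma}$, $g_{\lambda}\to h_{\lambda,\rho}$, $L_{\!f}\to L_{\Xi}$, $F_{\sigma,\gamma}\to G_{\sigma,\gamma,\rho}$, $H_{\tau,\varsigma}\to\Upsilon_{\!\tau,\varsigma}$ that you carry out, and the four prerequisites (a)--(d) you verify are precisely what makes the transcription legitimate. The only remark worth adding is that in part (iv) the prox-inequality trick is not actually needed here: since $G_{\sigma,\gamma,\rho}=\Xi_{\sigma,\gamma}+h_{\lambda,\rho}$ is continuous relative to $\mathcal{S}$ (the discontinuous $\|\cdot\|_0$ of the znorm case being replaced by $\|\cdot\|_1$), the convergence $G_{\sigma,\gamma,\rho}(x^{k_j})\to G_{\sigma,\gamma,\rho}(x^*)$ follows from plain continuity, though your more elaborate argument is also valid.
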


  By using Lemma \ref{lemma1-PGMe2} and following the same arguments
  as those for Theorem \ref{global-conv}, it is not difficult to achieve
  the following convergence results for Algorithm \ref{PGMe2}.
 \begin{theorem}\label{global2-conv}
  Let $\{x^k\}_{k\in\mathbb{N}}$ be the sequence generated by Algorithm \ref{PGMe2}.
  Then,
 \begin{itemize}
 \item[(i)] $\sum_{k=1}^{\infty}\|x^{k+1}-x^k\|<\infty$ and consequently $\{x^k\}_{k\in \mathbb{N}}$
             converges to some $x^*\in S_{\tau,h_{\lambda,\rho}}$.

 \item[(ii)] If $G_{\sigma,\gamma,\rho}$ is a KL function of exponent $1/2$,
             then there exist $c_2>0$ and $\varrho\in(0,1)$ such that
             for all sufficiently large $k$, $\|x^k-x^*\|\le c_2\varrho^k$.
 \end{itemize}
 \end{theorem}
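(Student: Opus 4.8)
The plan is to imitate the proof of Theorem~\ref{global-conv} essentially line by line, replacing the potential function $H_{\tau,\varsigma}$ by $\Upsilon_{\!\tau,\varsigma}$ from \eqref{Upsion}, the map $\mathcal{P}_{\!\tau}g_{\lambda}$ by $\mathcal{P}_{\!\tau}h_{\lambda,\rho}$, and Lemmas~\ref{lemma1-PGMe1}--\ref{lemma2-PGMe1} by Lemma~\ref{lemma1-PGMe2}. The first step is to check that $\Upsilon_{\!\tau,\varsigma}$ is a KL function: since $f_{\sigma,\gamma}$ is piecewise linear-quadratic, the conjugate $\psi^*$ in \eqref{psi-star} is piecewise linear-quadratic, $\mathcal{S}=\{x\,|\,\|x\|^2=1\}$ and $\|\cdot\|_1$ are semi-algebraic, and $\|x-u\|^2$ is a polynomial, the function $G_{\sigma,\gamma,\rho}=\Xi_{\sigma,\gamma}+h_{\lambda,\rho}$ and hence $\Upsilon_{\!\tau,\varsigma}$ are semi-algebraic, and therefore KL functions by Remark~\ref{KL-remark}(a).

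For part~(i), Lemma~\ref{lemma1-PGMe2} supplies precisely the three ingredients of the abstract KL convergence scheme: the sufficient-decrease inequality and the convergence of $\{\Upsilon_{\!\tau,\varsigma}(x^k,x^{k-1})\}_{k\in\mathbb{N}}$ in~(i); the relative-error (subgradient) bound in~(ii); and, in~(iii)--(iv), the fact that the accumulation set of $\{(x^k,x^{k-1})\}_{k\in\mathbb{N}}$ is a nonempty compact subset of ${\rm crit}\,\Upsilon_{\!\tau,\varsigma}$ on which $\Upsilon_{\!\tau,\varsigma}$ is finite and constant (this set being $\{(x,x)\,|\,x\in\pi(x^0)\}$, because $\|x^k-x^{k-1}\|\to 0$). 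With these in hand I would invoke the uniformized KL property \cite[Lemma~6]{Bolte14} on this accumulation set and run the telescoping argument of \cite[Theorem~1]{Bolte14} (equivalently \cite[Theorem~2]{Attouch09}, or directly \cite[Theorem~10]{Ochs19}) to deduce $\sum_{k=1}^{\infty}\|x^{k+1}-x^k\|<\infty$; hence $\{x^k\}_{k\in\mathbb{N}}$ is Cauchy, its limit $x^*$ is its unique accumulation point, and $x^*\in S_{\tau,h_{\lambda,\rho}}$ by Lemma~\ref{lemma1-PGMe2}(iii).

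For part~(ii), when $G_{\sigma,\gamma,\rho}$ is a KL function of exponent $1/2$, the potential $\Upsilon_{\!\tau,\varsigma}(x,u)=G_{\sigma,\gamma,\rho}(x)+\frac{\varsigma}{4\tau}\|x-u\|^2$ is again a KL function of exponent $1/2$ by \cite[Theorem~3.6]{LiPong18}. Choosing the desingularizing function $\varphi(t)=c\sqrt{t}$ and combining the resulting estimate $\sqrt{\Upsilon_{\!\tau,\varsigma}(x^k,x^{k-1})-\Upsilon_{\!\tau,\varsigma}(x^*,x^*)}\le\frac{c}{2}[b_1'\|x^k-x^{k-1}\|+b_2'\|x^{k-1}-x^{k-2}\|]$ with the descent inequality of Lemma~\ref{lemma1-PGMe2}(i), the same manipulation as in Theorem~\ref{global-conv}(ii) gives, for $\Delta_k:=\sum_{j\ge k}\|x^{j+1}-x^j\|$, a recursion $\Delta_k\le\varrho\,\Delta_{k-2}$ with $\varrho\in(0,1)$ depending only on $b_1',b_2',L_{\Xi},\varsigma,\tau$ and $c$; iterating yields $\|x^k-x^*\|\le\Delta_k\le c_2\varrho^k$ for all sufficiently large $k$ (after the usual renaming of $\varrho$), and the degenerate case of finite termination is handled exactly as there.

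Only two points are not a verbatim transcription. First, to pass to the limit in $x^{k+1}\in\mathcal{P}_{\!\tau}h_{\lambda,\rho}(\widetilde{x}^k-\tau\nabla\Xi_{\sigma,\gamma}(\widetilde{x}^k))$ and conclude $\pi(x^0)\subseteq S_{\tau,h_{\lambda,\rho}}$, one needs $h_{\lambda,\rho}$ to be proximally bounded with threshold $+\infty$ and $\mathcal{P}_{\!\tau}h_{\lambda,\rho}$ to be outer semicontinuous; both follow from \cite[Example~5.23]{RW98} since $h_{\lambda,\rho}$ is proper, lsc and bounded below. Second, one needs $\lim_{j}G_{\sigma,\gamma,\rho}(x^{k_j})=G_{\sigma,\gamma,\rho}(x^*)$ along accumulation subsequences in order to know that $\Upsilon_{\!\tau,\varsigma}$ is constant on the accumulation set; this is in fact easier here than in Theorem~\ref{global-conv}, because $h_{\lambda,\rho}$ restricted to $\mathcal{S}$ is continuous ($\|\cdot\|_1$ being continuous), so no analogue of the delicate support-counting argument in Lemma~\ref{lemma2-PGMe1}(iii) is required. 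I therefore do not expect a genuine obstacle; the only real cost is carrying the modified Lipschitz constant $L_{\Xi}$ and constants $b_1',b_2'$ through the estimates.
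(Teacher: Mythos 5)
Your proposal is correct and follows exactly the route the paper intends: the paper omits the proof of Theorem~\ref{global2-conv}, stating only that it follows from Lemma~\ref{lemma1-PGMe2} by the same arguments as Theorem~\ref{global-conv}, and your line-by-line adaptation (with $\Upsilon_{\!\tau,\varsigma}$ in place of $H_{\tau,\varsigma}$ and $L_{\Xi},b_1',b_2'$ carried through) is precisely that argument. Your two flagged non-verbatim points — outer semicontinuity of $\mathcal{P}_{\!\tau}h_{\lambda,\rho}$ via proximal boundedness, and the (simpler) continuity argument for the constancy of $\Upsilon_{\!\tau,\varsigma}$ on the accumulation set — are both handled correctly.
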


 Theorem \ref{global2-conv} (ii) requires that $G_{\sigma,\gamma,\rho}$ is a KL function
 of exponent $1/2$. We next show that it indeed holds under a little
 stronger condition than the one used in Lemma \ref{KL-exponent}.
 \begin{lemma}\label{KL-exponent1}
  If $\lambda$ and $\rho$ are chosen with
  $\lambda\rho\!>\!{\displaystyle\max_{z\in{\rm crit}G_{\sigma,\gamma,\rho}}}\|\nabla\!f_{\sigma,\gamma}(z)\|_\infty$ 
  and all $\overline{x}\!\in{\rm crit}G_{\sigma,\gamma,\rho}$ satisfy
  $\Gamma(\overline{x})=\emptyset$ and $|\overline{x}|_{\rm nz}>\frac{2a}{\rho(a-1)}$,
  then $G_{\sigma,\gamma,\rho}$ is a KL function of exponent $0$.
 \end{lemma}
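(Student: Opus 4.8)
The plan is to adapt the proof of Lemma~\ref{KL-exponent}. The one essential difference is that, since the surrogate penalty $\lambda\rho\varphi_{\rho}$ is continuous (unlike the zero-norm), enlarging the support of $x$ no longer makes $G_{\sigma,\gamma,\rho}$ jump, so the ``empty level-set'' device of Lemma~\ref{KL-exponent} cannot be copied verbatim; instead, on the part of a small ball about a critical point where the support strictly grows, I would bound ${\rm dist}(0,\partial G_{\sigma,\gamma,\rho}(x))$ below by a positive constant, and it is precisely here that the hypothesis $\lambda\rho>\max_{z\in{\rm crit}G_{\sigma,\gamma,\rho}}\|\nabla\!f_{\sigma,\gamma}(z)\|_\infty$ enters.

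Fix any $\overline{x}\in{\rm crit}G_{\sigma,\gamma,\rho}$ and set $J:={\rm supp}(\overline{x})$. I would first record: (a) since $|\overline{x}|_{\rm nz}>\frac{2a}{\rho(a-1)}$, Lemma~\ref{lemma-critical}(iv) gives $0\in\partial G_{\sigma,\gamma,\rho}(\overline{x})\subseteq\partial F_{\sigma,\gamma}(\overline{x})$, so $\overline{x}\in{\rm crit}F_{\sigma,\gamma}$ and, together with $\Gamma(\overline{x})=\emptyset$, $\overline{x}$ meets every assumption used in the proof of Lemma~\ref{KL-exponent}; (b) letting $I_{0},I_{1},I_{2}$ be the partition of $[m]$ associated with $A\overline{x}$ as in that proof (so $L_{\sigma,\gamma}\circ A$ is affine near $\overline{x}$), by continuity there is $\varepsilon>0$, with $\varepsilon\le1/2$ and shrunk finitely often below, such that on $\mathbb{B}(\overline{x},\varepsilon)$ the partition is preserved, ${\rm supp}(x)\supseteq J$, $\|x_{J}\|\ge1/2$, and $|x_{j}|>\frac{2a}{\rho(a-1)}$ for $j\in J$ while $\rho|x_{j}|\le\frac{2}{a+1}$ for $j\notin J$ — whence, by \eqref{psi-star}, $(\psi^{*})'(\rho|x_{j}|)=1$ for $j\in J$ and $(\psi^{*})'(\rho|x_{j}|)=\psi^{*}(\rho|x_{j}|)=0$ for $j\notin J$; (c) $\delta_{0}:=\lambda\rho-\|\nabla\!f_{\sigma,\gamma}(\overline{x})\|_\infty>0$, so, shrinking $\varepsilon$, $\|\nabla\!f_{\sigma,\gamma}(x)\|_\infty\le\lambda\rho-\delta_{0}/2$ on $\mathbb{B}(\overline{x},\varepsilon)$.

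Now take any $x\in\mathbb{B}(\overline{x},\varepsilon)$ with $G_{\sigma,\gamma,\rho}(\overline{x})<G_{\sigma,\gamma,\rho}(x)<+\infty$ (so $x\in\mathcal{S}$). If ${\rm supp}(x)=J$, then by (b) one has $|x_{j}|-\rho^{-1}\psi^{*}(\rho|x_{j}|)=\rho^{-1}$ for $j\in J$ and $=0$ otherwise, hence $\lambda\rho\varphi_{\rho}(x)=\lambda|J|=\lambda\rho\varphi_{\rho}(\overline{x})$ and $G_{\sigma,\gamma,\rho}(x)-G_{\sigma,\gamma,\rho}(\overline{x})=f_{\sigma,\gamma}(x)-f_{\sigma,\gamma}(\overline{x})$; but for such $x$ the computation in the proof of Lemma~\ref{KL-exponent}, which there used only $x\in\mathcal{S}$, the affine form of $L_{\sigma,\gamma}\circ A$ on the ball and $0\in\partial F_{\sigma,\gamma}(\overline{x})$, yields $f_{\sigma,\gamma}(x)-f_{\sigma,\gamma}(\overline{x})=\sum_{i\in I_{2}}[(A\overline{x})_{i}-(Ax)_{i}]\le0$ ($I_{2}=\emptyset$ being trivial), a contradiction. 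Thus necessarily $K:={\rm supp}(x)\setminus J\ne\emptyset$. By Lemma~\ref{lemma-critical}(iii) any $v\in\partial G_{\sigma,\gamma,\rho}(x)$ is $v=\nabla\!f_{\sigma,\gamma}(x)-\lambda\rho\nabla\theta_{\rho}(x)+\beta x+\lambda\rho s$ with $\beta\in\mathbb{R}$, $s\in\partial\|x\|_{1}$; by (b), $(\nabla\theta_{\rho}(x))_{j}=s_{j}={\rm sign}(x_{j})$ for $j\in J$, so $v_{J}=A_{J}^{\mathbb{T}}\nabla\!L_{\sigma,\gamma}(Ax)+\beta x_{J}$, and since $\|x_{J}\|\ge1/2$ and $\|A_{J}^{\mathbb{T}}\nabla\!L_{\sigma,\gamma}(Ax)\|$ is bounded on the ball by some $C_{0}$, $\|v\|\ge\|v_{J}\|\ge1$ whenever $|\beta|\ge C:=2C_{0}+2$. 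If $|\beta|\le C$, pick $i\in K$; by (b) $(\nabla\theta_{\rho}(x))_{i}=0$ and $s_{i}={\rm sign}(x_{i})$, so $v_{i}=(\nabla\!f_{\sigma,\gamma}(x))_{i}+\beta x_{i}+\lambda\rho\,{\rm sign}(x_{i})$ and, by (c) and $|x_{i}|\le\|x-\overline{x}\|\le\varepsilon$, $|v_{i}|\ge\lambda\rho-\|\nabla\!f_{\sigma,\gamma}(x)\|_\infty-|\beta|\,|x_{i}|\ge\delta_{0}/2-C\varepsilon\ge\delta_{0}/4$ once $\varepsilon\le\delta_{0}/(4C)$, hence $\|v\|\ge\delta_{0}/4$. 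Therefore ${\rm dist}(0,\partial G_{\sigma,\gamma,\rho}(x))\ge c:=\min(1,\delta_{0}/4)>0$, so for any $\eta>0$ the KL inequality at $\overline{x}$ holds on $\mathbb{B}(\overline{x},\varepsilon)\cap[G_{\sigma,\gamma,\rho}(\overline{x})<G_{\sigma,\gamma,\rho}<G_{\sigma,\gamma,\rho}(\overline{x})+\eta]$ with $\varphi(t)=t/c$; as $\overline{x}\in{\rm crit}G_{\sigma,\gamma,\rho}$ was arbitrary and, by Remark~\ref{KL-remark}(b), exponent $0$ is automatic at noncritical points, $G_{\sigma,\gamma,\rho}$ is a KL function of exponent $0$.

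The step I expect to be the main obstacle is the lower bound on ${\rm dist}(0,\partial G_{\sigma,\gamma,\rho}(x))$ over $x$ with enlarged support: one must argue that the normal-cone multiplier $\beta$ cannot be tuned to cancel the terms $\lambda\rho\,{\rm sign}(x_{i})$, $i\in K$. This is exactly what is bought by the two conditions in the hypothesis — $|\overline{x}|_{\rm nz}>\frac{2a}{\rho(a-1)}$ keeps the $J$-entries in the linear branch of $\psi^{*}$ and, together with $\|x_{J}\|\approx1$ forced by $\mathcal{S}$, confines $\beta$ to a bounded range, while $\lambda\rho>\|\nabla\!f_{\sigma,\gamma}(\overline{x})\|_\infty$ makes $\lambda\rho$ strictly dominate $\|\nabla\!f_{\sigma,\gamma}\|_\infty+|\beta|\,|x_{i}|$ once $|x_{i}|$ is small. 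The remaining work is routine neighborhood bookkeeping: collecting the finitely many shrinkings of $\varepsilon$ into one consistent choice and checking that $C_{0}$, $C$, $c$ do not depend on it.
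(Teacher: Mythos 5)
Your argument is correct, and it diverges from the paper's proof precisely at the step you flagged as the main obstacle. Both proofs work on a small ball around $\overline{x}$ on which the index partition $I_0,I_1,I_2$ and the support inclusion ${\rm supp}(x)\supseteq J$ are preserved, and both dispose of the case ${\rm supp}(x)=J$ by the same mechanism (there $G_{\sigma,\gamma,\rho}(x)-G_{\sigma,\gamma,\rho}(\overline{x})=f_{\sigma,\gamma}(x)-f_{\sigma,\gamma}(\overline{x})=\sum_{i\in I_2}[(A\overline{x})_i-(Ax)_i]$, which the $I_0,I_1,I_2$ computation of Lemma \ref{KL-exponent} forces to be $\le 0$). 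The difference is in the enlarged-support case. The paper rules it out by a function-value argument: it claims that, by continuity, every $i\in{\rm supp}(x)$ — including newly activated indices in ${\rm supp}(x)\setminus J$ — satisfies $|x_i|>\frac{2a}{\rho(a-1)}$, so the penalty jumps by roughly $\frac{a\lambda}{a-1}$ and the sandwich set $\mathbb{B}(\overline{x},\varepsilon)\cap[G_{\sigma,\gamma,\rho}(\overline{x})<G_{\sigma,\gamma,\rho}<G_{\sigma,\gamma,\rho}(\overline{x})+\eta]$ is empty, the KL inequality then holding vacuously; the hypothesis $\lambda\rho>\max_{z}\|\nabla\!f_{\sigma,\gamma}(z)\|_\infty$ is used only afterwards, to cancel the $\overline{J}$-components in the computation of ${\rm dist}(0,\partial G_{\sigma,\gamma,\rho}(x))$ for the support-preserving points. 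You instead keep the enlarged-support points and bound ${\rm dist}(0,\partial G_{\sigma,\gamma,\rho}(x))$ below by a constant there, using $\lambda\rho$-domination of $\|\nabla\!f_{\sigma,\gamma}\|_\infty$ together with the boundedness of the sphere multiplier $\beta$ forced by $\|x_J\|\approx 1$. This is a genuine and, in my view, advantageous departure: the paper's continuity claim that newly activated entries exceed $\frac{2a}{\rho(a-1)}$ does not hold for arbitrary nearby $x$ (such entries can be arbitrarily small), so its emptiness argument is incomplete exactly on the set your subgradient bound covers. The price is only cosmetic — you obtain the KL inequality with $\varphi(t)=t/c$ on a possibly nonempty set rather than vacuously — and both versions yield exponent $0$. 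Your bookkeeping (the use of Lemma \ref{lemma-critical}(iv) to import criticality for $F_{\sigma,\gamma}$, the branch identification $(\psi^*)'(\rho|x_j|)=1$ on $J$ and $\psi^*(\rho|x_j|)=0$ off $J$, and the two-regime bound on $\beta$) is sound.
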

 \begin{proof}
  Fix any $\overline{x}\in{\rm crit}G_{\sigma,\gamma,\rho}$ with $\Gamma(\overline{x})=\emptyset$
  and $|\overline{x}|_{\rm nz}>\frac{2a}{\rho(a-1)}$. Let $J={\rm supp}(\overline{x})$
  and $\overline{J}=[n]\backslash J$. Let $\theta_{\!\rho}$ be the function in the proof
  of Lemma \ref{lemma-critical}. Since $[\nabla\theta_{\!\rho}(\overline{x})]_{\overline{J}}=0$,
  the given assumption means that  $\|[\nabla\!f_{\sigma,\gamma}(\overline{x})\!-\!\lambda\rho\nabla\theta_{\!\rho}(\overline{x})]_{\overline{J}}\|_\infty<\lambda\rho$.
  By the continuity, there exists $\delta_0>0$ such that for all $x\in\mathbb{B}(\overline{x},\delta_0)$,
  \(
    \|[\nabla\!f_{\sigma,\gamma}(x)\!-\!\lambda\rho\nabla\theta_{\!\rho}(x)]_{\overline{J}}\|_\infty<\lambda\rho.
  \)
  Let $I_0,I_1$ and $I_2$ be same as
  in the proof of Lemma \ref{KL-exponent}. Then, there exists
  $\delta_1>0$ such that for all $x\in\mathbb{B}(\overline{x},\delta_1)$,
  ${\rm supp}(x)\supseteq J$ and the relations in \eqref{Phi-gamma} hold.
  By the continuity, there exist $\delta_2>0$ such that
  for all $x\in\mathbb{B}(\overline{x},\delta_2)$, $|x_i|>\frac{2a}{\rho(a-1)}$
  with $i\in{\rm supp}(x)$ and
  \begin{equation}\label{temp-equaXi}
    \Xi_{\sigma,\gamma}(x)+\lambda\rho{\textstyle\sum_{i\in J}}|x_i|
    >\Xi_{\sigma,\gamma}(\overline{x})+\lambda\rho{\textstyle\sum_{i\in J}}|\overline{x}_i|-{a\lambda}/{(a\!-\!1)}.
  \end{equation}
  Set $\delta=\min(\delta_0,\delta_1,\delta_2)$. Pick any $\eta\in(0,\frac{a\lambda}{2(a-1)})$.
  Next we argue that
  $\mathbb{B}(\overline{x},\varepsilon)
  \cap[G_{\sigma,\gamma,\rho}(\overline{x})<G_{\sigma,\gamma,\rho}<G_{\sigma,\gamma,\rho}(\overline{x})+\eta]=\emptyset$,
  which by Definition \ref{KL-def} implies that $G_{\sigma,\gamma,\rho}$
  is a KL function of exponent $1/2$. Suppose on the contradiction that
  there exists $x\in\mathbb{B}(\overline{x},\varepsilon)
  \cap[G_{\sigma,\gamma,\rho}(\overline{x})<G_{\sigma,\gamma,\rho}<G_{\sigma,\gamma,\rho}(\overline{x})+\eta]$.
  From $G_{\sigma,\gamma,\rho}(x)<G_{\sigma,\gamma,\rho}(\overline{x})+\eta$, we have $x\in\mathcal{S}$,
  which along with ${\rm supp}(x)\supseteq J$ implies that ${\rm supp}(x)=J$ (if not,
  we will have $\Xi_{\sigma,\gamma}(x)+\lambda\rho\sum_{i\in J}|x_i|
  +\lambda\rho\sum_{i\in{\rm supp}(x)\backslash J}|x_i|<G_{\sigma,\gamma,\rho}(x)
  <\Xi_{\sigma,\gamma}(\overline{x})+\lambda\rho\sum_{i\in J}|\overline{x}_i|+\eta$,
  which along with \eqref{temp-equaXi} and $|x_i|>\frac{2a}{\rho(a-1)}$ for
  $i\in{\rm supp}(x)\backslash J$ implies that $\eta>\frac{a\lambda}{a-1}$,
  a contradiction to $\eta<\frac{a\lambda}{2(a-1)}$). Now from ${\rm supp}(x)=J$
  and $|x_i|>\frac{2a}{\rho(a-1)}$ for $i\in{\rm supp}(x)$, it is not hard to
  verify that $\|x\|_1-\theta_{\!\rho}(x)=\|\overline{x}\|_1-\theta_{\!\rho}(\overline{x})$.
  Together with $x\in\mathcal{S}$ and the expression of $L_{\sigma,\gamma}$, we have
  \begin{equation}\label{G-gamma}
   0<G_{\sigma,\gamma,\rho}(x)-G_{\sigma,\gamma,\rho}(\overline{x})
   =L_{\sigma,\gamma}(Ax)-L_{\sigma,\gamma}(A\overline{x})
   ={\textstyle\sum_{i\in I_{2}}}[(A\overline{x})_i-(Ax)_i].
 \end{equation}
 Moreover, the equalities in \eqref{temp-equa31}-\eqref{temp-equa32} still hold for $x$.
 Let $\widetilde{f}_{\sigma,\gamma}$ be same as in the proof of Lemma \ref{KL-exponent}.
 Clearly, $\partial G_{\sigma,\gamma,\rho}(x)=\partial\!\widetilde{f}_{\sigma,\gamma}(x)
 +\lambda\rho[\partial\|x\|_1-\nabla\theta_{\!\rho}(x)]$. Then, it holds that
 \[
   {\rm dist}^2(0, \partial G_{\sigma,\gamma,\rho}(x))
   =\min_{u\in\partial\!\widetilde{f}_{\sigma,\gamma}(x),v\in\lambda\rho[\partial\|x\|_1-\nabla\theta_{\!\rho}(x)]}\|u+v\|^2.
 \]
 Notice that $\partial\!\widetilde{f}_{\sigma,\gamma}(x)=\{\nabla\!f_{\sigma,\gamma}(x)+\alpha x\,|\,\alpha\in\mathbb{R}\},
 \|[\nabla\!f_{\sigma,\gamma}(x)\!-\!\lambda\rho\nabla\theta_{\!\rho}(x)]_{\overline{J}}\|_\infty<\lambda\rho$
 and $v_{\overline{J}}\in[-\lambda\rho,\lambda\rho]-\lambda\rho[\nabla\theta_{\!\rho}(x)]_{\overline{J}}$.
 From the last equation, it follows that
 \begin{align}\label{temp-equa34}
  {\rm dist}^2(0, \partial G_{\sigma,\gamma,\rho}(x))
  &=\min_{u\in\partial\!\widetilde{f}_{\sigma,\gamma}(x)}\|u_{J}\|^2
   =\min_{\alpha\in\mathbb{R}}\|A_{J}^{\mathbb{T}}\nabla\!L_{\sigma,\gamma}(Ax)+\alpha x_{J}\|^2\nonumber\\
  &=\min_{\alpha\in\mathbb{R}}\alpha^2+2\langle Ax,\nabla\!L_{\sigma,\gamma}(Ax)\rangle\alpha+\|A_{J}^{\mathbb{T}}\nabla\!L_{\sigma,\gamma}(Ax)\|^2\nonumber\\
  &=\|A_{J}^{\mathbb{T}}\nabla\!L_{\sigma,\gamma}(Ax)\|^2-\langle Ax,\nabla\!L_{\sigma,\gamma}(Ax)\rangle^2.
 \end{align}
  Since $0\in\partial G_{\sigma,\gamma,\rho}(\overline{x})=\nabla\!f_{\sigma,\gamma}(\overline{x})
  +\mathcal{N}_{\mathcal{S}}(\overline{x})+\lambda\rho[\partial\|\overline{x}\|_1-\nabla\theta_{\rho}(\overline{x})]$
  and $|\overline{x}|_{\rm nz}>\frac{2a}{\rho(a-1)}$, by the proof of Lemma \ref{lemma-critical} (iii)
  and $\mathcal{N}_{\mathcal{S}}(\overline{x})=\{\alpha\overline{x}\,|\,\alpha\in\mathbb{R}\}$,
  there exists $\overline{\alpha}\in\mathbb{R}$ such that
  $A_{J}^{\mathbb{T}}\nabla\!L_{\sigma,\gamma}(A\overline{x})=\overline{\alpha}\,\overline{x}_{J}$ 
  with $\overline{\alpha}=\langle A\overline{x},\nabla\!L_{\sigma,\gamma}(A\overline{x})\rangle$.
  Together with \eqref{temp-equa31}-\eqref{temp-equa32} and \eqref{temp-equa34},
 \begin{align*}
  0&\le\|A_{J}^{\mathbb{T}}\nabla\!L_{\sigma,\gamma}(Ax)\|^2-\langle Ax,\nabla\!L_{\sigma,\gamma}(Ax)\rangle^2
   -\|A_{J}^{\mathbb{T}}\nabla\!L_{\sigma,\gamma}(A\overline{x})-\overline{\alpha}\,\overline{x}_{J}\|^2\\
  &=\|A_{J}^{\mathbb{T}}\nabla\!L_{\sigma,\gamma}(Ax)\|^2-\|A_{J}^{\mathbb{T}}\nabla\!L_{\sigma,\gamma}(A\overline{x})\|^2
    +\big[\langle\nabla\!L_{\sigma,\gamma}(A\overline{x}),A\overline{x}\rangle^2
     \!-\!\langle \nabla\!L_{\sigma,\gamma}(Ax),Ax\rangle^2\big]\nonumber\\
  &={\textstyle\sum_{i\in I_{2}}}[(A\overline{x})_i-(Ax)_i]\cdot{\textstyle\sum_{i\in I_{2}}}[(A\overline{x})_i+(Ax)_i].
 \end{align*}
 Since ${\textstyle\sum_{i\in I_{2}}}[(A\overline{x})_i+(Ax)_i]<0$,
 the last inequality implies that ${\textstyle\sum_{i\in I_{2}}}[(A\overline{x})_i-(Ax)_i]\le 0$,
 which is a contradiction to the inequality \eqref{G-gamma}.
 The proof is then completed.
 \end{proof}
 \section{Numerical experiments}\label{sec5}

 In this section we demonstrate the performance of the zero-norm regularized DC loss model \eqref{znorm-Moreau} and its surrogate \eqref{Esurrogate}, which are respectively solved with PGe-znorm and PGe-scad. All numerical experiments are performed in MATLAB on a laptop running on 64-bit Windows System with an Intel(R) Core(TM) i7-7700HQ CPU 2.80GHz and 16 GB RAM.
 The MATLAB package for reproducing all the numerical results can be found at \url{https://github.com/SCUT-OptGroup/onebit}.
 \subsection{Experiment setup}\label{sec5.1}

  The setup of our experiments is similar to the one in \cite{Yan12,HuangJ18}.
  Specifically, we generate the original $s^*$-sparse signal $x^{\rm true}$ with
  the support $T$ chosen uniformly from $\{1,2,\ldots,n\}$ and $(x^{\rm true})_{T}$
  taking the form of ${\xi}/{\|\xi\|}$, where the entries of $\xi\in\mathbb{R}^{s^*}$
  are drawn from the standard normal distribution. Then, we obtain the observation vector $b$
  via \eqref{observation}, where the sampling matrix $\Phi\in\mathbb{R}^{m\times n}$
  is generated in the two ways: (I) the rows of $\Phi$ are i.i.d. samples of $N(0,\Sigma)$
  with $\Sigma_{ij}=\mu^{|i-j|}$ for $i,j\in[n]$ (II) the entries of $\Phi$ are i.i.d.
  and follow the standard normal distribution; the noise $\varepsilon\in\mathbb{R}^m$
  is generated from $N(0,\varpi^2I)$; and the entries of $\zeta$ is set by
  $\mathbb{P}(\zeta_i=1)=1-\mathbb{P}(\zeta_i=-1)=1-r$. In the sequel, we denote
  the corresponding data with the two triples $(m,n,s^*)$ and $(\mu,\varpi,r)$,
  where $\mu$ means the correlation factor,
  $\varpi$ denotes the noise level and $r$ means the sign flip ratio.

  We evaluate the quality of an output $x^{\rm sol}$ of a solver in terms of
  the mean squared error (MSE), the Hamming error (Herr), the ratio of missing support (FNR)
  and the ratio of misidentified support (FPR), which are defined as follows
  \begin{align*}
    {\rm MSE}:=\|x^{\rm sol}\!-\!x^{\rm true}\|,\
    {\rm Herr}:=\dfrac{1}{m}\|{\rm sign}(\Phi x^{\rm sol})-{\rm sign}(\Phi x^{\rm true})\|_0,\\
    {\rm FNR}:= \frac{|T\backslash {\rm supp}(x^{\rm sol})|}{|T|}\ \ {\rm and}\ \
    {\rm FPR}:= \frac{|{\rm supp}(x^{\rm sol})\backslash T|}{n-|T|},\qquad
  \end{align*}
  where, in our numerical experiments, a component of a vector $z\in\mathbb{R}^n$
  being nonzero means that its absolute value is larger than $10^{-5}\|z\|_\infty$.
  Clearly, a solver has a better performance if its output has the smaller MSE,
  ${\rm Herr}$, {\rm FNR} and {\rm FPR}.
 \subsection{Implementation of PGe-znorm and PGe-scad}\label{sec5.2}

  From the definition of $x^{k+1}$ in PGe-znorm and PGe-scad,
  we have $(x^{k+1}\!-\!\widetilde{x}^k)+\widetilde{x}^k
  \in\mathcal{P}_{\!\tau}g_{\lambda}(\widetilde{x}^k\!-\!\tau\nabla\!f_{\sigma,\gamma}(\widetilde{x}^k))$
  and $(x^{k+1}-\widetilde{x}^k)+\widetilde{x}^k
  \in\mathcal{P}_{\!\tau}h_{\lambda,\rho}(\widetilde{x}^k\!-\!\tau\nabla\!f_{\sigma,\gamma}(\widetilde{x}^k))$.
  Together with the expression of $\mathcal{P}_{\!\tau}g_{\lambda}$ and $\mathcal{P}_{\!\tau}h_{\lambda,\rho}$,
  when $\|x^{k+1}\!-\!\widetilde{x}^k\|$ is small enough,
  $\widetilde{x}^k$ can be viewed as an approximate $\tau$-stationary point.
  Hence, we terminate PGe-znorm and PGe-scad at the iterate $x^k$
  once $\|x^{k+1}-\widetilde{x}^k\|\le 10^{-6}$ or $k\ge2000$. In addition, we also terminate
  the two algorithms at $x^k$ when $\frac{|F_{\sigma,\gamma}(x^{k-j})-F_{\sigma,\gamma}(x^{k-j})|}
  {\max(1,F_{\sigma,\gamma}(x^{k-j}))}\le 10^{-10}$ for $k\ge 100$ and $j=0,1,\ldots,9$.
  The extrapolation parameters $\beta_k$ in the two algorithms are chosen by \eqref{accelerate}
  with $\beta_{\rm max}=0.235$. The starting point $x^0$ of PGe-znorm and PGe-scad
  is always chosen to be ${e^{\mathbb{T}}A}/{\|e^{\mathbb{T}}A\|}$.
 \subsection{Choice of the model parameters}\label{sec5.3}

  The model \eqref{znorm-Moreau} and its surrogate \eqref{Esurrogate} involve
  the parameters $\lambda>0,\rho>0$ and $0<\gamma<\sigma/2$. By
  Figure \ref{fig_gam} and \ref{fig_rho}, we choose $\gamma=0.05,\rho=10$
  for the subsequent tests. To choose an appropriate $\sigma>2\gamma$,
  we generate the original signal $x^{\rm true}$, the sampling matrix $\Phi$ of type I
  and the observation $b$ with $(m,n,s^*,r)=(500,1000,5,1.0)$,
  and then solve the model \eqref{znorm-Moreau} associated to $\gamma=0.05,\lambda=10$ for
  each $\sigma\in\{0.2,0.4,\ldots,3\}$ with PGe-znorm and the model \eqref{Esurrogate}
  associated to $\gamma=0.05,\lambda=5,\rho=10$ for each $\sigma\in\{0.2,0.4,\ldots,3\}$ with PGe-scad.
  Figure \ref{fig_sig} plots the average MSE of $50$ trials for each $\sigma$.
  We see that $\sigma\in[0.6,1.2]$ is a desirable choice, so choose $\sigma=0.8$
  for the two models in the subsequent experiments.
 \begin{figure}[!h]
   \centering
    \setlength{\abovecaptionskip}{10pt}
  \includegraphics[width=16cm,height=6cm]{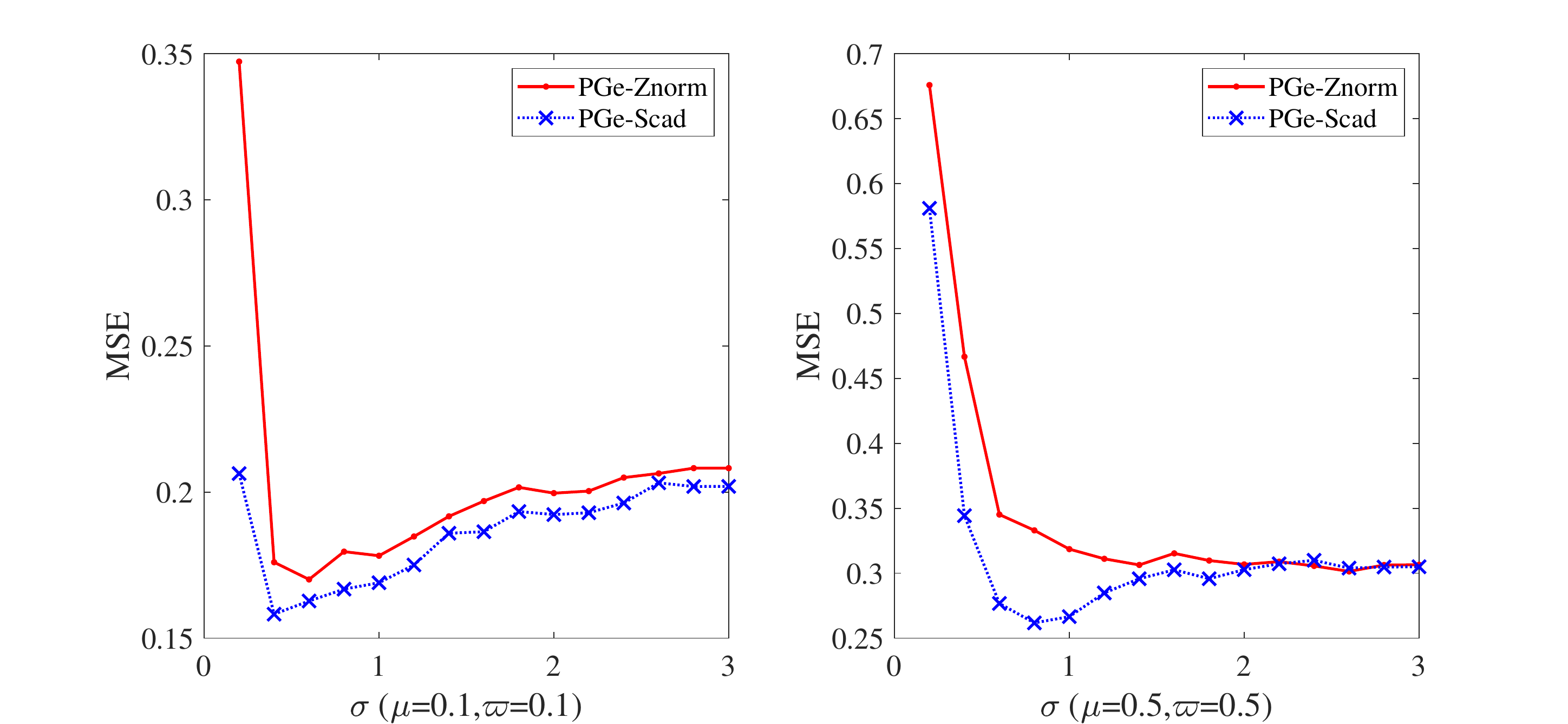}
   \caption{Influence of $\sigma$ on the performance of the models \eqref{znorm-Moreau} and \eqref{Esurrogate}}
  \label{fig_sig}
 \end{figure}

  Next we take a closer look at the influence of $\lambda$ on the models \eqref{znorm-Moreau}
  and \eqref{Esurrogate}. To this end, we generate the signal $x^{\rm true}$,
  the sampling matrix $\Phi$ of type I, and the observation $b$ with $(m,n,s^*)=(500,1000,5)$
  and $(\mu,\varpi)=(0.3,0.1)$, and then solve the model \eqref{znorm-Moreau} associated to
  $\sigma=0.8,\gamma=0.05$ for each $\lambda\in\{1,3,5,\ldots,49\}$ with PGe-znorm and
  solve the model \eqref{Esurrogate} associated to $\sigma=0.8,\gamma=0.05,\rho=10$
  for each $\lambda\in\{0.5,1.5,2.5,\ldots,24.5\}$ with PGe-scad.
  Figure \ref{fig_lam} plots the average MSE of $50$ trials for each $\lambda$.
  When $r=0.15$, the MSE from the model \eqref{znorm-Moreau} has a small variation
  for $\lambda\in[7,49]$, while the MSE from the model \eqref{Esurrogate} has
  a small variation for $\lambda\in[3,24.5]$. When $r=0.05$, the MSE from the model
  \eqref{znorm-Moreau} has a small variation for $\lambda\in[5,49]$ and is relatively
  low for $\lambda\in[5,16]$, while the MSE from the model \eqref{Esurrogate} has a tiny
  change for $\lambda\in[1.5,24.5]$. In view of this, we always choose $\lambda=8$
  for the model \eqref{znorm-Moreau}, and choose $\lambda=4$ and $\lambda=8$ for the model \eqref{Esurrogate}
  with $n\le 5000$ and $n>5000$, respectively, in the subsequent experiments.
 \begin{figure}[!h]
   \centering
    \setlength{\abovecaptionskip}{10pt}
  \includegraphics[width=16cm,height=6cm]{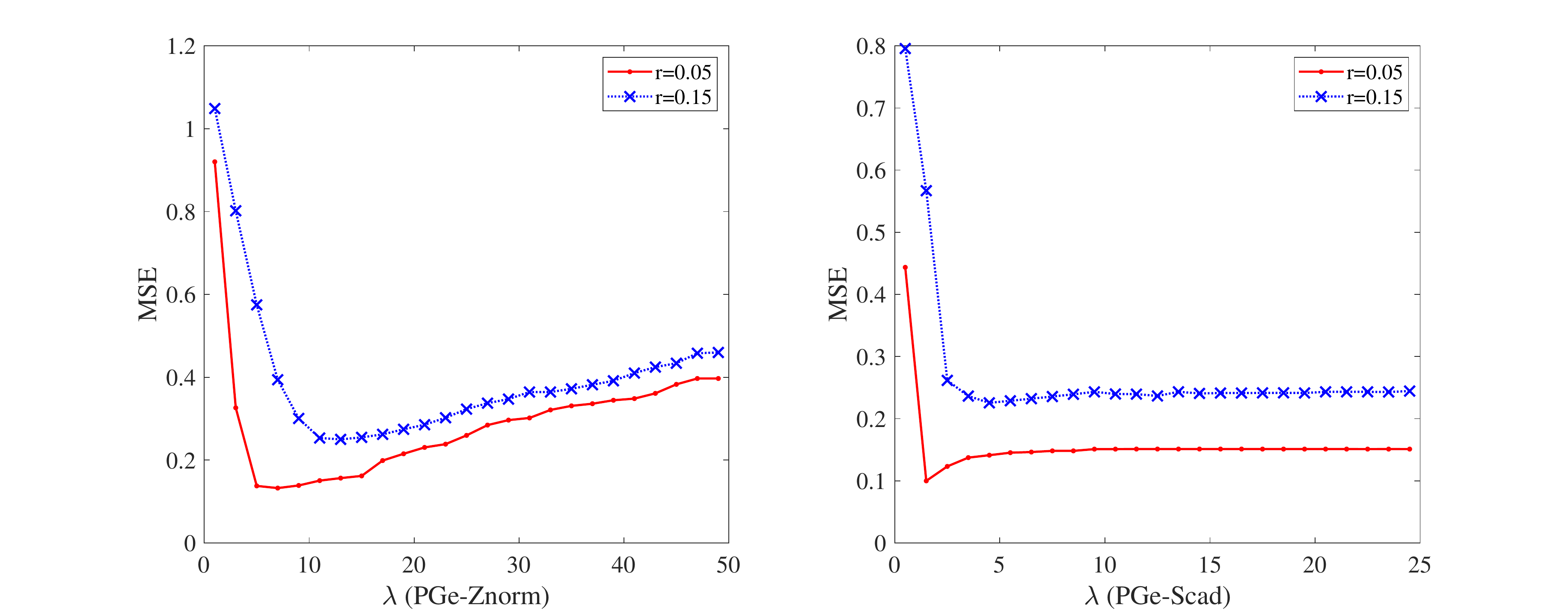}
   \caption{Influence of $\lambda$ on the MSE from the models \eqref{znorm-Moreau} and \eqref{Esurrogate}}
  \label{fig_lam}
 \end{figure}
 \subsection{Numerical comparisons}\label{sec5.4}

  We compare PGe-znorm and PGe-scad with six state-of-the-art solvers, which are
  BIHT-AOP \cite{Yan12}, PIHT \cite{HuangY18}, PIHT-AOP \cite{HuangS18}, GPSP \cite{Zhou21}
  (\url{https://github.com/ShenglongZhou/GPSP}), PDASC \cite{HuangJ18} and WPDASC \cite{Fan21}.
  Among others, the codes for BIHT-AOP, PIHT and PIHT-AOP can be found at \url{http://www.esat.kuleuven.be/stadius/ADB/huang/downloads/1bitCSLab.zip} and
  the codes for PDASC and WPDASC can be found at \url{https://github.com/cjia80/numericalSimulation}.
  It is worth pointing out that BIHT-AOP, GPSP and PIHT-AOP all require
  an estimation on $s^*$ and $r$ as an input, PIHT require an estimation
  on $s^*$ as an input, while PDASC, WPDASC, PGe-znorm and PGe-scad do not need
  these prior information. For the solvers to require an estimation on $s^*$
  and $r$, we directly input the true sparsity $s^*$ and $r$ as those papers do.
  During the testing, PGe-znorm and PGe-scad use the parameters described before,
  and other solvers use their default setting except the PIHT is terminated once
  its iteration is over $100$.
  
  We first apply the eight solvers to solving the test problems with the sampling matrix
  of type I and low noise. Table \ref{table1} reports their average MSE, Herr, FNR, FPR
  and CPU time for $50$ trials. We see that among the four solvers without requiring any
  information on $x^{\rm true}$, PGe-scad and PGe-znorm yield the lower MSE, {\rm Herr}
  and FNR than PDASC and WPDASC do, and PGe-scad is the best one in terms of MSE, Herr and FNR;
  while among the four solvers requiring some information on $x^{\rm true}$,
  BIHT-AOP and PIHT-AOP yield the smaller MSE and {\rm Herr} than PIHT and GPSP do,
  and the former also yields the lower FNR and FPR under the scenario of $r=0.05$.
  When comparing PGe-scad with BIHT-AOP and PIHT-AOP, the former yields the smaller
  MSE, Herr, FNR and FPR under the scenario of $r=0.15$, and under the scenario
  of $r=0.05$, it also yields the comparable MSE, Herr and FNR as BIHT-AOP and PIHT-AOP do.
\begin{table}[!h]
\centering
\tiny
\captionsetup{font={scriptsize}}
\caption{Numerical comparisons of eight solvers for test problems with $\Phi$ of type I and low noise}
\label{table1}
\resizebox{\textwidth}{35mm}{
\begin{tabular}{|llllllllllllllll|}
\hline
\multicolumn{16}{|c|}{$m=800,n=2000,s^*=10,\varpi=0.1,{\bf r=0.05}$}                                                                                                                                                                                                                                 \\ \hline
\multicolumn{1}{|l|}{}&\multicolumn{5}{c|}{$\mu=0.1$}&\multicolumn{5}{c|}{$\mu=0.3$}&\multicolumn{5}{c|}{$\mu=0.5$}
\\ \hline
\multicolumn{1}{|c|}{solvers}&
\multicolumn{1}{c|}{MSE}&\multicolumn{1}{c|}{Herr}&\multicolumn{1}{c|}{FNR}&\multicolumn{1}{c|}{FPR}&\multicolumn{1}{c|}{time(s)}& \multicolumn{1}{c|}{MSE}&\multicolumn{1}{c|}{Herr}&\multicolumn{1}{c|}{FNR}&\multicolumn{1}{c|}{FPR}&\multicolumn{1}{c|}{time(s)}&
\multicolumn{1}{c|}{MSE}&\multicolumn{1}{c|}{Herr}&\multicolumn{1}{c|}{FNR}&\multicolumn{1}{c|}{FPR}&\multicolumn{1}{c|}{time(s)}
\\ \hline
\multicolumn{1}{|c|}{PIHT}&\multicolumn{1}{c|}{2.57e-1}&\multicolumn{1}{c|}{6.80e-2}&\multicolumn{1}{c|}{3.26e-1}&\multicolumn{1}{c|}{1.64e-3}&\multicolumn{1}{c|}{1.55e-1}& \multicolumn{1}{c|}{2.75e-1}&\multicolumn{1}{c|}{7.27e-2}&\multicolumn{1}{c|}{3.52e-1}&\multicolumn{1}{c|}{1.77e-3}&\multicolumn{1}{c|}{1.57e-1}&
\multicolumn{1}{c|}{3.52e-1}&\multicolumn{1}{c|}{9.22e-2}&\multicolumn{1}{c|}{4.24e-1}&\multicolumn{1}{c|}{2.13e-3}&\multicolumn{1}{c|}{1.58e-1}\\ \hline
\multicolumn{1}{|c|}{BIHT-AOP}&\multicolumn{1}{c|}{\color{red}1.46e-1}&\multicolumn{1}{c|}{\color{red}4.36e-2}&\multicolumn{1}{c|}{\color{red}1.94e-1}&\multicolumn{1}{c|}{\color{red}9.75e-4}&\multicolumn{1}{c|}{5.30e-1}&
\multicolumn{1}{c|}{\color{red}1.32e-1}&\multicolumn{1}{c|}{\color{red}3.85e-2}&\multicolumn{1}{c|}{\color{red}1.80e-1}&\multicolumn{1}{c|}{\color{red}9.05e-4}&\multicolumn{1}{c|}{5.47e-1}&
\multicolumn{1}{c|}{1.46e-1}&\multicolumn{1}{c|}{4.18e-2}&\multicolumn{1}{c|}{2.06e-1}&\multicolumn{1}{c|}{1.04e-3}&\multicolumn{1}{c|}{5.45e-1}\\ \hline
\multicolumn{1}{|c|}{PIHT-AOP}&\multicolumn{1}{c|}{1.61e-1}&\multicolumn{1}{c|}{4.67e-2}&\multicolumn{1}{c|}{2.06e-1}&\multicolumn{1}{c|}{1.04e-3}&\multicolumn{1}{c|}{1.81e-1}& \multicolumn{1}{c|}{1.55e-1}&\multicolumn{1}{c|}{4.60e-2}&\multicolumn{1}{c|}{1.90e-1}&\multicolumn{1}{c|}{9.55e-4}&\multicolumn{1}{c|}{1.92e-1}&
\multicolumn{1}{c|}{\color{red}1.40e-1}&\multicolumn{1}{c|}{\color{red}4.17e-2}&\multicolumn{1}{c|}{\color{red}2.02e-1}&\multicolumn{1}{c|}{\color{red}1.02e-3}&\multicolumn{1}{c|}{1.86e-1}\\ \hline
\multicolumn{1}{|c|}{GPSP}&\multicolumn{1}{c|}{1.91e-1}&\multicolumn{1}{c|}{5.02e-2}&\multicolumn{1}{c|}{2.56e-1}&\multicolumn{1}{c|}{1.29e-3}&\multicolumn{1}{c|}{1.60e-2}
&\multicolumn{1}{c|}{1.87e-1}&\multicolumn{1}{c|}{4.83e-2}&\multicolumn{1}{c|}{2.40e-1}&\multicolumn{1}{c|}{1.21e-3}&\multicolumn{1}{c|}{1.83e-2}&
\multicolumn{1}{c|}{1.89e-1}&\multicolumn{1}{c|}{4.78e-2}&\multicolumn{1}{c|}{2.52e-1}&\multicolumn{1}{c|}{1.27e-3}&\multicolumn{1}{c|}{2.31e-2}\\
\hline\hline
\multicolumn{1}{|c|}{\bf PGe-scad}&\multicolumn{1}{c|}{2.15e-1}&\multicolumn{1}{c|}{6.70e-2}&\multicolumn{1}{c|}{\color{red}3.34e-1}&\multicolumn{1}{c|}{\color{red}0}&\multicolumn{1}{c|}{2.29e-1}&
\multicolumn{1}{c|}{\color{red}2.04e-1}&\multicolumn{1}{c|}{\color{red}6.36e-2}&\multicolumn{1}{c|}{\color{red}3.32e-1}&\multicolumn{1}{c|}{\color{red}0}&\multicolumn{1}{c|}{2.79e-1}&
\multicolumn{1}{c|}{\color{red}2.10e-1}&\multicolumn{1}{c|}{\color{red}6.42e-2}&\multicolumn{1}{c|}{\color{red}3.44e-1}&\multicolumn{1}{c|}{\color{red}1.01e-5}&\multicolumn{1}{c|}{2.82e-1}\\
\hline
\multicolumn{1}{|c|}{\bf PGe-znorm}&\multicolumn{1}{c|}{\color{red}2.10e-1}&\multicolumn{1}{c|}{\color{red}6.52e-2}&\multicolumn{1}{c|}{3.58e-1}&\multicolumn{1}{c|}{2.01e-5}&\multicolumn{1}{c|}{1.19e-1}&
\multicolumn{1}{c|}{2.10e-1}&\multicolumn{1}{c|}{6.41e-2}&\multicolumn{1}{c|}{3.62e-1}&\multicolumn{1}{c|}{4.02e-5}&\multicolumn{1}{c|}{1.24e-1}&
\multicolumn{1}{c|}{2.22e-1}&\multicolumn{1}{c|}{6.82e-2}&\multicolumn{1}{c|}{3.72e-1}&\multicolumn{1}{c|}{3.02e-5}&\multicolumn{1}{c|}{1.26e-1}\\ \hline
\multicolumn{1}{|c|}{\bf PDASC}&\multicolumn{1}{c|}{4.29e-1}&\multicolumn{1}{c|}{1.34e-1}&\multicolumn{1}{c|}{5.94e-1}&\multicolumn{1}{c|}{0}&\multicolumn{1}{c|}{6.04e-2}&
\multicolumn{1}{c|}{4.27e-1}&\multicolumn{1}{c|}{1.33e-1}&\multicolumn{1}{c|}{5.92e-1}&\multicolumn{1}{c|}{0}&\multicolumn{1}{c|}{5.98e-2}&
\multicolumn{1}{c|}{4.53e-1}&\multicolumn{1}{c|}{1.37e-1}&\multicolumn{1}{c|}{6.08e-1}&\multicolumn{1}{c|}{1.01e-5}&\multicolumn{1}{c|}{5.93e-2}\\ \hline
\multicolumn{1}{|c|}{\bf WPDASC}&\multicolumn{1}{c|}{4.38e-1}&\multicolumn{1}{c|}{1.37e-1}&\multicolumn{1}{c|}{6.02e-1}&\multicolumn{1}{c|}{0}&\multicolumn{1}{c|}{9.31e-2}&
\multicolumn{1}{c|}{4.20e-1}&\multicolumn{1}{c|}{1.30e-1}&\multicolumn{1}{c|}{5.78e-1}&\multicolumn{1}{c|}{0}&\multicolumn{1}{c|}{9.32e-2}&
\multicolumn{1}{c|}{3.97e-1}&\multicolumn{1}{c|}{1.20e-1}&\multicolumn{1}{c|}{5.56e-1}&\multicolumn{1}{c|}{1.01e-5}&\multicolumn{1}{c|}{9.69e-2}\\ \hline
\multicolumn{16}{|c|}{$m=800,n=2000,s^*=10,\varpi=0.1,{\bf r=0.15}$}
\\ \hline
\multicolumn{1}{|l|}{}&\multicolumn{5}{c|}{$\mu=0.1$}&\multicolumn{5}{c|}{$\mu=0.3$}&\multicolumn{5}{c|}{$\mu=0.5$}
\\ \hline
\multicolumn{1}{|c|}{}&
\multicolumn{1}{c|}{MSE}&\multicolumn{1}{c|}{Herr}&\multicolumn{1}{c|}{FNR}&\multicolumn{1}{c|}{FPR}&\multicolumn{1}{c|}{time(s)}&
\multicolumn{1}{c|}{MSE}&\multicolumn{1}{c|}{Herr}&\multicolumn{1}{c|}{FNR}&\multicolumn{1}{c|}{FPR}&\multicolumn{1}{c|}{time(s)}&
\multicolumn{1}{c|}{MSE}&\multicolumn{1}{c|}{Herr}&\multicolumn{1}{c|}{FNR}&\multicolumn{1}{c|}{FPR}&\multicolumn{1}{c|}{time(s)}
\\ \hline
\multicolumn{1}{|c|}{PIHT}&\multicolumn{1}{c|}{4.10e-1}&\multicolumn{1}{c|}{1.13e-1}&\multicolumn{1}{c|}{4.04e-1}&\multicolumn{1}{c|}{2.03e-3}&\multicolumn{1}{c|}{1.61e-1}& \multicolumn{1}{c|}{4.01e-1}&\multicolumn{1}{c|}{1.08e-1}&\multicolumn{1}{c|}{\color{red}2.90e-1}&\multicolumn{1}{c|}{1.96e-3}&\multicolumn{1}{c|}{1.57e-1}&
\multicolumn{1}{c|}{4.18e-1}&\multicolumn{1}{c|}{1.12e-1}&\multicolumn{1}{c|}{4.02e-1}&\multicolumn{1}{c|}{2.02e-1}&\multicolumn{1}{c|}{1.60e-1}\\ \hline
\multicolumn{1}{|c|}{BIHT-AOP}&\multicolumn{1}{c|}{3.77e-1}&\multicolumn{1}{c|}{1.04e-1}&\multicolumn{1}{c|}{4.10e-1}&\multicolumn{1}{c|}{2.06e-3}&\multicolumn{1}{c|}{5.41e-1}&
\multicolumn{1}{c|}{3.74e-1}&\multicolumn{1}{c|}{\color{red}9.88e-2}&\multicolumn{1}{c|}{4.16e-1}&\multicolumn{1}{c|}{2.09e-3}&\multicolumn{1}{c|}{5.51e-1}&
\multicolumn{1}{c|}{\color{red}3.56e-1}&\multicolumn{1}{c|}{\color{red}9.49e-2}&\multicolumn{1}{c|}{3.94e-1}&\multicolumn{1}{c|}{1.98e-3}&\multicolumn{1}{c|}{5.38e-1}\\ \hline
\multicolumn{1}{|c|}{PIHT-AOP}&\multicolumn{1}{c|}{\color{red}3.48e-1}&\multicolumn{1}{c|}{\color{red}9.80e-2}&\multicolumn{1}{c|}{\color{red}3.82e-1}&\multicolumn{1}{c|}{\color{red}1.92e-3}&\multicolumn{1}{c|}{1.85e-1}&
\multicolumn{1}{c|}{\color{red}3.70e-1}&\multicolumn{1}{c|}{1.01e-1}&\multicolumn{1}{c|}{4.10e-1}&\multicolumn{1}{c|}{2.06e-3}&\multicolumn{1}{c|}{1.91e-1}&
\multicolumn{1}{c|}{3.65e-1}&\multicolumn{1}{c|}{9.68e-2}&\multicolumn{1}{c|}{4.08e-1}&\multicolumn{1}{c|}{2.05e-3}&\multicolumn{1}{c|}{1.87e-1}\\ \hline
\multicolumn{1}{|c|}{GPSP}&\multicolumn{1}{c|}{3.90e-1}&\multicolumn{1}{c|}{1.05e-1}&\multicolumn{1}{c|}{3.86e-1}&\multicolumn{1}{c|}{1.94e-3}&\multicolumn{1}{c|}{1.86e-2}&
\multicolumn{1}{c|}{3.73e-1}&\multicolumn{1}{c|}{1.01e-1}&\multicolumn{1}{c|}{3.76e-1}&\multicolumn{1}{c|}{\color{red}1.89e-3}&\multicolumn{1}{c|}{2.04e-2}&
\multicolumn{1}{c|}{3.63e-1}&\multicolumn{1}{c|}{9.31e-2}&\multicolumn{1}{c|}{\color{red}3.74e-1}&\multicolumn{1}{c|}{\color{red}1.88e-3}&\multicolumn{1}{c|}{2.48e-2}\\
\hline\hline
\multicolumn{1}{|c|}{\bf PGe-scad}&\multicolumn{1}{c|}{\color{red}2.72e-1}&\multicolumn{1}{c|}{\color{red}8.54e-2}&\multicolumn{1}{c|}{\color{red}3.98e-1}&\multicolumn{1}{c|}{1.51e-4}&\multicolumn{1}{c|}{2.31e-1}&
\multicolumn{1}{c|}{\color{red}2.78e-1}&\multicolumn{1}{c|}{\color{red}8.67e-2}&\multicolumn{1}{c|}{\color{red}3.90e-1}&\multicolumn{1}{c|}{1.81e-4}&\multicolumn{1}{c|}{2.83e-1}&
\multicolumn{1}{c|}{\color{red}2.83e-1}&\multicolumn{1}{c|}{\color{red}8.39e-2}&\multicolumn{1}{c|}{\color{red}4.02e-1}&\multicolumn{1}{c|}{1.91e-4}&\multicolumn{1}{c|}{2.63e-1}\\
\hline
\multicolumn{1}{|c|}{\bf PGe-znorm}&\multicolumn{1}{c|}{3.33e-1}&\multicolumn{1}{c|}{9.82e-2}&\multicolumn{1}{c|}{4.20e-1}&\multicolumn{1}{c|}{8.24e-4}&\multicolumn{1}{c|}{1.34e-1}&
\multicolumn{1}{c|}{3.31e-1}&\multicolumn{1}{c|}{9.64e-2}&\multicolumn{1}{c|}{4.16e-1}&\multicolumn{1}{c|}{9.45e-3}&\multicolumn{1}{c|}{1.34e-1}&
\multicolumn{1}{c|}{3.42e-1}&\multicolumn{1}{c|}{9.56e-2}&\multicolumn{1}{c|}{4.14e-1}&\multicolumn{1}{c|}{9.55e-4}&\multicolumn{1}{c|}{1.50e-1}\\ \hline
\multicolumn{1}{|c|}{\bf PDASC}&\multicolumn{1}{c|}{5.63e-1}&\multicolumn{1}{c|}{1.80e-1}&\multicolumn{1}{c|}{6.88e-1}&\multicolumn{1}{c|}{0}&\multicolumn{1}{c|}{5.08e-2}&
\multicolumn{1}{c|}{5.89e-1}&\multicolumn{1}{c|}{1.85e-1}&\multicolumn{1}{c|}{7.12e-1}&\multicolumn{1}{c|}{0}&\multicolumn{1}{c|}{5.18e-2}&
\multicolumn{1}{c|}{5.58e-1}&\multicolumn{1}{c|}{1.73e-1}&\multicolumn{1}{c|}{6.90e-1}&\multicolumn{1}{c|}{4.02e-5}&\multicolumn{1}{c|}{5.18e-2}\\ \hline
\multicolumn{1}{|c|}{\bf WPDASC}&\multicolumn{1}{c|}{5.40e-1}&\multicolumn{1}{c|}{1.71e-1}&\multicolumn{1}{c|}{6.72e-1}&\multicolumn{1}{c|}{0}&\multicolumn{1}{c|}{7.93e-2}&
\multicolumn{1}{c|}{5.87e-1}&\multicolumn{1}{c|}{1.83e-1}&\multicolumn{1}{c|}{7.10e-1}&\multicolumn{1}{c|}{1.01e-5}&\multicolumn{1}{c|}{8.32e-2}&
\multicolumn{1}{c|}{5.63e-1}&\multicolumn{1}{c|}{1.75e-1}&\multicolumn{1}{c|}{6.98e-1}&\multicolumn{1}{c|}{0}&\multicolumn{1}{c|}{8.08e-2}\\ \hline
\end{tabular}}
\end{table}

 Next we use the eight solvers to solve the test problems with the sampling matrix
 of type I and high noise. Table \ref{table2} reports the average MSE, Herr, FNR,
 FPR and CPU time for $50$ trials. Now among the four solvers requiring partial
 information on $x^{\rm true}$, GPSP yields the smallest MSE, {\rm Herr}, FNR
 and FPR, and among the four solvers without requiring any information on $x^{\rm true}$,
 PGe-scad is still the best one. Also, for those problems with $r=0.15$,
 PGe-scad yields the smaller MSE, Herr and FNR than GPSP does.
\begin{table}[!h]
\centering
\tiny
\captionsetup{font={scriptsize}}
\caption{Numerical comparisons of eight solvers for test problems with $\Phi$ of type I and high noise}
\label{table2}
\resizebox{\textwidth}{35mm}{
\begin{tabular}{|llllllllllllllll|}
\hline
\multicolumn{16}{|c|}{$m=1000,n=5000,s^*=15,\varpi=0.3,{\bf r=0.05}$}                                                                                                                                                                                                                                 \\ \hline
\multicolumn{1}{|l|}{}&\multicolumn{5}{c|}{$\mu=0.1$}&\multicolumn{5}{c|}{$\mu=0.3$}&\multicolumn{5}{c|}{$\mu=0.5$}
\\ \hline
\multicolumn{1}{|c|}{solvers}&
\multicolumn{1}{c|}{MSE}&\multicolumn{1}{c|}{Herr}&\multicolumn{1}{c|}{FNR}&\multicolumn{1}{c|}{FPR}&\multicolumn{1}{c|}{time(s)}& \multicolumn{1}{c|}{MSE}&\multicolumn{1}{c|}{Herr}&\multicolumn{1}{c|}{FNR}&\multicolumn{1}{c|}{FPR}&\multicolumn{1}{c|}{time(s)}&
\multicolumn{1}{c|}{MSE}&\multicolumn{1}{c|}{Herr}&\multicolumn{1}{c|}{FNR}&\multicolumn{1}{c|}{FPR}&\multicolumn{1}{c|}{time(s)}
\\ \hline
\multicolumn{1}{|c|}{PIHT}&\multicolumn{1}{c|}{3.48e-1}&\multicolumn{1}{c|}{9.53e-2}&\multicolumn{1}{c|}{4.15e-1}&\multicolumn{1}{c|}{1.25e-3}&\multicolumn{1}{c|}{5.42e-1}& \multicolumn{1}{c|}{3.40e-1}&\multicolumn{1}{c|}{9.54e-2}&\multicolumn{1}{c|}{4.19e-1}&\multicolumn{1}{c|}{1.26e-3}&\multicolumn{1}{c|}{5.46e-1}&
\multicolumn{1}{c|}{3.62e-1}&\multicolumn{1}{c|}{9.67e-2}&\multicolumn{1}{c|}{4.47e-1}&\multicolumn{1}{c|}{1.34e-3}&\multicolumn{1}{c|}{5.56e-1}\\ \hline
\multicolumn{1}{|c|}{BIHT-AOP}&\multicolumn{1}{c|}{3.57e-1}&\multicolumn{1}{c|}{1.11e-1}&\multicolumn{1}{c|}{3.80e-1}&\multicolumn{1}{c|}{1.14e-3}&\multicolumn{1}{c|}{1.59e-0}&
\multicolumn{1}{c|}{3.47e-1}&\multicolumn{1}{c|}{1.09e-1}&\multicolumn{1}{c|}{3.67e-1}&\multicolumn{1}{c|}{1.10e-3}&\multicolumn{1}{c|}{1.60e-0}&
\multicolumn{1}{c|}{3.25e-1}&\multicolumn{1}{c|}{1.05e-1}&\multicolumn{1}{c|}{3.61e-1}&\multicolumn{1}{c|}{1.09e-3}&\multicolumn{1}{c|}{1.61e-0}\\ \hline
\multicolumn{1}{|c|}{PIHT-AOP}&\multicolumn{1}{c|}{3.71e-1}&\multicolumn{1}{c|}{1.17e-1}&\multicolumn{1}{c|}{3.83e-1}&\multicolumn{1}{c|}{1.15e-3}&\multicolumn{1}{c|}{5.66e-1}& \multicolumn{1}{c|}{3.47e-1}&\multicolumn{1}{c|}{1.10e-1}&\multicolumn{1}{c|}{3.71e-1}&\multicolumn{1}{c|}{1.11e-3}&\multicolumn{1}{c|}{5.79e-1}&
\multicolumn{1}{c|}{3.30e-1}&\multicolumn{1}{c|}{1.10e-1}&\multicolumn{1}{c|}{\color{red}3.59e-1}&\multicolumn{1}{c|}{\color{red}1.08e-3}&\multicolumn{1}{c|}{5.83e-1}\\ \hline
\multicolumn{1}{|c|}{GPSP}&\multicolumn{1}{c|}{\color{red}2.64e-1}&\multicolumn{1}{c|}{\color{red}7.33e-2}&\multicolumn{1}{c|}{\color{red}3.31e-1}&\multicolumn{1}{c|}{\color{red}9.95e-4}&\multicolumn{1}{c|}{5.77e-2}
&\multicolumn{1}{c|}{\color{red}2.68e-1}&\multicolumn{1}{c|}{\color{red}7.52e-2}&\multicolumn{1}{c|}{\color{red}3.32e-1}&\multicolumn{1}{c|}{\color{red}9.99e-4}&\multicolumn{1}{c|}{5.19e-2}&
\multicolumn{1}{c|}{\color{red}2.95e-1}&\multicolumn{1}{c|}{\color{red}8.08e-2}&\multicolumn{1}{c|}{3.63e-1}&\multicolumn{1}{c|}{1.09e-3}&\multicolumn{1}{c|}{4.89e-2}\\
\hline\hline
\multicolumn{1}{|c|}{\bf PGe-scad}&\multicolumn{1}{c|}{\color{red}2.65e-1}&\multicolumn{1}{c|}{\color{red}8.36e-2}&\multicolumn{1}{c|}{\color{red}3.87e-1}&\multicolumn{1}{c|}{\color{red}2.21e-4}&\multicolumn{1}{c|}{1.05e-0}&
\multicolumn{1}{c|}{\color{red}2.67e-1}&\multicolumn{1}{c|}{\color{red}8.35e-2}&\multicolumn{1}{c|}{\color{red}3.85e-1}&\multicolumn{1}{c|}{\color{red}2.45e-4}&\multicolumn{1}{c|}{1.01e-0}&
\multicolumn{1}{c|}{\color{red}2.65e-1}&\multicolumn{1}{c|}{\color{red}8.13e-2}&\multicolumn{1}{c|}{\color{red}3.93e-1}&\multicolumn{1}{c|}{\color{red}2.29e-4}&\multicolumn{1}{c|}{1.10e-0}\\
\hline
\multicolumn{1}{|c|}{\bf PGe-znorm}&\multicolumn{1}{c|}{2.89e-1}&\multicolumn{1}{c|}{8.85e-2}&\multicolumn{1}{c|}{4.67e-1}&\multicolumn{1}{c|}{8.83e-5}&\multicolumn{1}{c|}{4.05e-1}&
\multicolumn{1}{c|}{2.92e-1}&\multicolumn{1}{c|}{8.89e-2}&\multicolumn{1}{c|}{4.69e-1}&\multicolumn{1}{c|}{7.22e-5}&\multicolumn{1}{c|}{4.17e-1}&
\multicolumn{1}{c|}{3.00e-1}&\multicolumn{1}{c|}{8.95e-2}&\multicolumn{1}{c|}{4.77e-1}&\multicolumn{1}{c|}{9.63e-5}&\multicolumn{1}{c|}{4.20e-1}\\ \hline
\multicolumn{1}{|c|}{\bf PDASC}&\multicolumn{1}{c|}{5.55e-1}&\multicolumn{1}{c|}{1.77e-1}&\multicolumn{1}{c|}{7.24e-1}&\multicolumn{1}{c|}{0}&\multicolumn{1}{c|}{1.63e-1}&
\multicolumn{1}{c|}{5.80e-1}&\multicolumn{1}{c|}{1.84e-1}&\multicolumn{1}{c|}{7.44e-1}&\multicolumn{1}{c|}{0}&\multicolumn{1}{c|}{1.64e-1}&
\multicolumn{1}{c|}{5.96e-1}&\multicolumn{1}{c|}{1.89e-1}&\multicolumn{1}{c|}{7.48e-1}&\multicolumn{1}{c|}{4.01e-6}&\multicolumn{1}{c|}{1.65e-1}\\ \hline
\multicolumn{1}{|c|}{\bf WPDASC}&\multicolumn{1}{c|}{5.73e-1}&\multicolumn{1}{c|}{1.84e-1}&\multicolumn{1}{c|}{7.36e-1}&\multicolumn{1}{c|}{0}&\multicolumn{1}{c|}{2.66e-1}&
\multicolumn{1}{c|}{5.54e-1}&\multicolumn{1}{c|}{1.76e-1}&\multicolumn{1}{c|}{7.19e-1}&\multicolumn{1}{c|}{0}&\multicolumn{1}{c|}{2.68e-1}&
\multicolumn{1}{c|}{5.96e-1}&\multicolumn{1}{c|}{1.88e-1}&\multicolumn{1}{c|}{7.49e-1}&\multicolumn{1}{c|}{0}&\multicolumn{1}{c|}{2.70e-1}\\ \hline
\multicolumn{16}{|c|}{$m=1000,n=5000,s^*=15,\varpi=0.3,{\bf r=0.15}$}
\\ \hline
\multicolumn{1}{|l|}{}&\multicolumn{5}{c|}{$\mu=0.1$}&\multicolumn{5}{c|}{$\mu=0.3$}&\multicolumn{5}{c|}{$\mu=0.5$}
\\ \hline
\multicolumn{1}{|c|}{}&
\multicolumn{1}{c|}{MSE}&\multicolumn{1}{c|}{Herr}&\multicolumn{1}{c|}{FNR}&\multicolumn{1}{c|}{FPR}&\multicolumn{1}{c|}{time(s)}&
\multicolumn{1}{c|}{MSE}&\multicolumn{1}{c|}{Herr}&\multicolumn{1}{c|}{FNR}&\multicolumn{1}{c|}{FPR}&\multicolumn{1}{c|}{time(s)}&
\multicolumn{1}{c|}{MSE}&\multicolumn{1}{c|}{Herr}&\multicolumn{1}{c|}{FNR}&\multicolumn{1}{c|}{FPR}&\multicolumn{1}{c|}{time(s)}
\\ \hline
\multicolumn{1}{|c|}{PIHT}&\multicolumn{1}{c|}{5.28e-1}&\multicolumn{1}{c|}{1.48e-1}&\multicolumn{1}{c|}{4.97e-1}&\multicolumn{1}{c|}{1.50e-3}&\multicolumn{1}{c|}{5.52e-1}& \multicolumn{1}{c|}{5.42e-1}&\multicolumn{1}{c|}{1.51e-1}&\multicolumn{1}{c|}{5.09e-1}&\multicolumn{1}{c|}{1.53e-3}&\multicolumn{1}{c|}{5.50e-1}&
\multicolumn{1}{c|}{5.30e-1}&\multicolumn{1}{c|}{1.47e-1}&\multicolumn{1}{c|}{4.81e-1}&\multicolumn{1}{c|}{1.45e-3}&\multicolumn{1}{c|}{5.46e-1}\\ \hline
\multicolumn{1}{|c|}{BIHT-AOP}&\multicolumn{1}{c|}{5.23e-1}&\multicolumn{1}{c|}{1.51e-1}&\multicolumn{1}{c|}{5.13e-1}&\multicolumn{1}{c|}{1.54e-3}&\multicolumn{1}{c|}{1.61e-0}&
\multicolumn{1}{c|}{4.97e-1}&\multicolumn{1}{c|}{1.45e-1}&\multicolumn{1}{c|}{5.00e-1}&\multicolumn{1}{c|}{1.50e-3}&\multicolumn{1}{c|}{1.60e-0}&
\multicolumn{1}{c|}{5.19e-1}&\multicolumn{1}{c|}{1.46e-1}&\multicolumn{1}{c|}{5.35e-1}&\multicolumn{1}{c|}{1.61e-3}&\multicolumn{1}{c|}{1.61e-0}\\ \hline
\multicolumn{1}{|c|}{PIHT-AOP}&\multicolumn{1}{c|}{5.13e-1}&\multicolumn{1}{c|}{1.46e-1}&\multicolumn{1}{c|}{5.09e-1}&\multicolumn{1}{c|}{1.53e-3}&\multicolumn{1}{c|}{5.78e-1}&
\multicolumn{1}{c|}{5.04e-1}&\multicolumn{1}{c|}{1.47e-1}&\multicolumn{1}{c|}{5.13e-1}&\multicolumn{1}{c|}{1.54e-3}&\multicolumn{1}{c|}{5.79e-1}&
\multicolumn{1}{c|}{5.30e-1}&\multicolumn{1}{c|}{1.52e-1}&\multicolumn{1}{c|}{5.45e-1}&\multicolumn{1}{c|}{1.64e-3}&\multicolumn{1}{c|}{5.70e-1}\\ \hline
\multicolumn{1}{|c|}{GPSP}&\multicolumn{1}{c|}{\color{red}4.60e-1}&\multicolumn{1}{c|}{\color{red}1.29e-1}&\multicolumn{1}{c|}{\color{red}4.59e-1}&\multicolumn{1}{c|}{\color{red}1.38e-3}&\multicolumn{1}{c|}{8.21e-2}&
\multicolumn{1}{c|}{\color{red}4.60e-1}&\multicolumn{1}{c|}{\color{red}1.29e-1}&\multicolumn{1}{c|}{\color{red}4.65e-1}&\multicolumn{1}{c|}{\color{red}1.40e-3}&\multicolumn{1}{c|}{8.13e-2}&
\multicolumn{1}{c|}{\color{red}4.90e-1}&\multicolumn{1}{c|}{\color{red}1.33e-1}&\multicolumn{1}{c|}{\color{red}4.77e-1}&\multicolumn{1}{c|}{\color{red}1.44e-3}&\multicolumn{1}{c|}{6.03e-2}\\
\hline\hline
\multicolumn{1}{|c|}{\bf PGe-scad}&\multicolumn{1}{c|}{\color{red}3.55e-1}&\multicolumn{1}{c|}{\color{red}1.09e-1}&\multicolumn{1}{c|}{\color{red}4.76e-1}&\multicolumn{1}{c|}{1.34e-3}&\multicolumn{1}{c|}{1.05e-0}&
\multicolumn{1}{c|}{\color{red}3.63e-1}&\multicolumn{1}{c|}{\color{red}1.12e-1}&\multicolumn{1}{c|}{\color{red}4.81e-1}&\multicolumn{1}{c|}{1.50e-3}&\multicolumn{1}{c|}{1.11e-0}&
\multicolumn{1}{c|}{\color{red}3.63e-1}&\multicolumn{1}{c|}{\color{red}1.12e-1}&\multicolumn{1}{c|}{\color{red}4.80e-1}&\multicolumn{1}{c|}{1.38e-3}&\multicolumn{1}{c|}{1.27e-0}\\
\hline
\multicolumn{1}{|c|}{\bf PGe-znorm}&\multicolumn{1}{c|}{4.22e-1}&\multicolumn{1}{c|}{1.24e-2}&\multicolumn{1}{c|}{5.17e-1}&\multicolumn{1}{c|}{6.38e-4}&\multicolumn{1}{c|}{4.19e-1}&
\multicolumn{1}{c|}{4.51e-1}&\multicolumn{1}{c|}{1.30e-1}&\multicolumn{1}{c|}{5.19e-1}&\multicolumn{1}{c|}{7.94e-4}&\multicolumn{1}{c|}{4.06e-1}&
\multicolumn{1}{c|}{4.41e-1}&\multicolumn{1}{c|}{1.27e-1}&\multicolumn{1}{c|}{5.21e-1}&\multicolumn{1}{c|}{6.62e-4}&\multicolumn{1}{c|}{4.49e-1}\\ \hline
\multicolumn{1}{|c|}{\bf PDASC}&\multicolumn{1}{c|}{6.90e-1}&\multicolumn{1}{c|}{2.24e-1}&\multicolumn{1}{c|}{8.12e-1}&\multicolumn{1}{c|}{4.01e-6}&\multicolumn{1}{c|}{1.35e-1}&
\multicolumn{1}{c|}{7.07e-1}&\multicolumn{1}{c|}{2.26e-1}&\multicolumn{1}{c|}{8.24e-1}&\multicolumn{1}{c|}{4.01e-6}&\multicolumn{1}{c|}{1.34e-1}&
\multicolumn{1}{c|}{7.11e-1}&\multicolumn{1}{c|}{2.27e-1}&\multicolumn{1}{c|}{8.27e-1}&\multicolumn{1}{c|}{0}&\multicolumn{1}{c|}{1.33e-1}\\ \hline
\multicolumn{1}{|c|}{\bf WPDASC}&\multicolumn{1}{c|}{6.62e-1}&\multicolumn{1}{c|}{2.14e-1}&\multicolumn{1}{c|}{7.92e-1}&\multicolumn{1}{c|}{0}&\multicolumn{1}{c|}{2.35e-1}&
\multicolumn{1}{c|}{6.82e-1}&\multicolumn{1}{c|}{2.18e-1}&\multicolumn{1}{c|}{8.03e-1}&\multicolumn{1}{c|}{4.01e-6}&\multicolumn{1}{c|}{2.34e-1}&
\multicolumn{1}{c|}{7.04e-1}&\multicolumn{1}{c|}{2.25e-1}&\multicolumn{1}{c|}{8.20e-1}&\multicolumn{1}{c|}{4.01e-6}&\multicolumn{1}{c|}{2.30e-1}\\ \hline
\end{tabular}}
\end{table}

 Finally, we use the eight solvers to solve the test problems with the sampling matrix of type II.
 Table \ref{table3} reports the average MSE, Herr, FNR, FPR and CPU time for $50$ trials.
 From Table \ref{table3}, among the four solvers requiring partial information on $x^{\rm true}$,
 PIHT yields the better MSE, {\rm Herr},FNR and FPR than others for those examples with
 high noise, and among the four solvers without needing any information on $x^{\rm true}$,
 PGe-scad is still the best one. Moreover, PGe-scad yields the smaller MSE, Herr and FNR
 than PIHT does for $\varpi=0.3$ and $0.5$. We also observe that among the eight solvers,
 GPSP always requires the least CPU time, and PGe-scad and PGe-znorm requires the comparable CPU time as
 PIHT, BIHT-AOP and PIHT-AOP do for all test examples.

\begin{table}[!h]
\centering
\tiny
\captionsetup{font={scriptsize}}
\caption{Numerical comparisons of eight solvers for test problems with $\Phi$ of type II with different noise levels}
\label{table3}
\resizebox{\textwidth}{15mm}{
\begin{tabular}{|llllllllllllllll|}
\hline
\multicolumn{16}{|c|}{$m=2500,n=10000,s^*=20,r=0.1$}
\\ \hline
\multicolumn{1}{|l|}{}&\multicolumn{5}{c|}{$\varpi=0.1$}&\multicolumn{5}{c|}{$\varpi=0.3$}&\multicolumn{5}{c|}{$\varpi=0.5$}
\\ \hline
\multicolumn{1}{|l|}{}&
\multicolumn{1}{c|}{MSE}&\multicolumn{1}{c|}{Herr}&\multicolumn{1}{c|}{FNR}&\multicolumn{1}{c|}{FPR}&\multicolumn{1}{c|}{time(s)}&
\multicolumn{1}{c|}{MSE}&\multicolumn{1}{c|}{Herr}&\multicolumn{1}{c|}{FNR}&\multicolumn{1}{c|}{FPR}&\multicolumn{1}{c|}{time(s)}&
\multicolumn{1}{c|}{MSE}&\multicolumn{1}{c|}{Herr}&\multicolumn{1}{c|}{FNR}&\multicolumn{1}{c|}{FPR}&\multicolumn{1}{c|}{time(s)}\\ \hline
\multicolumn{1}{|c|}{PIHT}&\multicolumn{1}{c|}{2.57e-1}&\multicolumn{1}{c|}{7.23e-1}&\multicolumn{1}{c|}{3.44e-1}&\multicolumn{1}{c|}{6.89e-4}&\multicolumn{1}{c|}{2.55}& \multicolumn{1}{c|}{\color{red}2.74e-1}&\multicolumn{1}{c|}{\color{red}8.15e-2}&\multicolumn{1}{c|}{\color{red}3.49e-1}&\multicolumn{1}{c|}{\color{red}6.99e-4}&\multicolumn{1}{c|}{2.54}&
\multicolumn{1}{c|}{\color{red}3.14e-1}&\multicolumn{1}{c|}{\color{red}9.58e-2}&\multicolumn{1}{c|}{\color{red}3.69e-1}&\multicolumn{1}{c|}{\color{red}7.39e-4}&\multicolumn{1}{c|}{2.54}\\ \hline
\multicolumn{1}{|c|}{BIHT-AOP}&\multicolumn{1}{c|}{\color{red}1.54e-1}&\multicolumn{1}{c|}{\color{red}4.61e-2}&\multicolumn{1}{c|}{\color{red}2.40e-1}&\multicolumn{1}{c|}{\color{red}4.81e-4}&\multicolumn{1}{c|}{7.70}&
\multicolumn{1}{c|}{3.06e-1}&\multicolumn{1}{c|}{9.84e-2}&\multicolumn{1}{c|}{3.36e-1}&\multicolumn{1}{c|}{6.73e-3}&\multicolumn{1}{c|}{7.68}&
\multicolumn{1}{c|}{4.23e-1}&\multicolumn{1}{c|}{1.29e-1}&\multicolumn{1}{c|}{3.95e-1}&\multicolumn{1}{c|}{7.92e-4}&\multicolumn{1}{c|}{7.64}\\ \hline
\multicolumn{1}{|c|}{PIHT-AOP}&\multicolumn{1}{c|}{1.68e-1}&\multicolumn{1}{c|}{5.08e-2}&\multicolumn{1}{c|}{2.44e-1}&\multicolumn{1}{c|}{4.89e-4}&\multicolumn{1}{c|}{2.66}&
\multicolumn{1}{c|}{3.16e-1}&\multicolumn{1}{c|}{1.03e-1}&\multicolumn{1}{c|}{3.38e-1}&\multicolumn{1}{c|}{6.77e-4}&\multicolumn{1}{c|}{2.66}&
\multicolumn{1}{c|}{4.62e-1}&\multicolumn{1}{c|}{1.52e-1}&\multicolumn{1}{c|}{4.14e-1}&\multicolumn{1}{c|}{8.30e-4}&\multicolumn{1}{c|}{2.66}\\ \hline
\multicolumn{1}{|c|}{GPSP}&\multicolumn{1}{c|}{2.45e-1}&\multicolumn{1}{c|}{6.88e-2}&\multicolumn{1}{c|}{3.36e-1}&\multicolumn{1}{c|}{6.73e-4}&\multicolumn{1}{c|}{0.19}&
\multicolumn{1}{c|}{2.77e-1}&\multicolumn{1}{c|}{8.22e-2}&\multicolumn{1}{c|}{3.51e-1}&\multicolumn{1}{c|}{7.03e-4}&\multicolumn{1}{c|}{0.19}&
\multicolumn{1}{c|}{3.23e-1}&\multicolumn{1}{c|}{9.68e-2}&\multicolumn{1}{c|}{3.73e-1}&\multicolumn{1}{c|}{7.47e-4}&\multicolumn{1}{c|}{0.19}\\
\hline\hline
\multicolumn{1}{|c|}{\bf PGe-scad}&\multicolumn{1}{c|}{\color{red}2.10e-1}&\multicolumn{1}{c|}{\color{red}6.65e-2}&\multicolumn{1}{c|}{\color{red}3.08e-1}&\multicolumn{1}{c|}{2.61e-5}&\multicolumn{1}{c|}{2.79}&
\multicolumn{1}{c|}{\color{red}2.44e-1}&\multicolumn{1}{c|}{\color{red}7.82e-2}&\multicolumn{1}{c|}{\color{red}3.61e-1}&\multicolumn{1}{c|}{2.10e-4}&\multicolumn{1}{c|}{2.71}&
\multicolumn{1}{c|}{\color{red}2.92e-1}&\multicolumn{1}{c|}{\color{red}9.34e-2}&\multicolumn{1}{c|}{\color{red}4.14e-1}&\multicolumn{1}{c|}{6.89e-4}&\multicolumn{1}{c|}{2.75}\\
\hline
\multicolumn{1}{|c|}{\bf PGe-znorm}&\multicolumn{1}{c|}{2.34e-1}&\multicolumn{1}{c|}{7.36e-2}&\multicolumn{1}{c|}{4.25e-1}&\multicolumn{1}{c|}{2.00e-6}&\multicolumn{1}{c|}{1.78}&
\multicolumn{1}{c|}{2.44e-1}&\multicolumn{1}{c|}{7.71e-2}&\multicolumn{1}{c|}{4.27e-1}&\multicolumn{1}{c|}{8.02e-6}&\multicolumn{1}{c|}{1.78}&
\multicolumn{1}{c|}{2.74e-1}&\multicolumn{1}{c|}{8.70e-2}&\multicolumn{1}{c|}{4.45e-1}&\multicolumn{1}{c|}{3.21e-5}&\multicolumn{1}{c|}{1.77}\\ \hline
\multicolumn{1}{|c|}{\bf PDASC}&\multicolumn{1}{c|}{5.41e-1}&\multicolumn{1}{c|}{1.73e-1}&\multicolumn{1}{c|}{6.84e-1}&\multicolumn{1}{c|}{0}&\multicolumn{1}{c|}{8.28e-1}&
\multicolumn{1}{c|}{6.26e-1}&\multicolumn{1}{c|}{2.01e-1}&\multicolumn{1}{c|}{7.50e-1}&\multicolumn{1}{c|}{0}&\multicolumn{1}{c|}{8.07e-1}&
\multicolumn{1}{c|}{6.28e-1}&\multicolumn{1}{c|}{2.03e-1}&\multicolumn{1}{c|}{7.56e-1}&\multicolumn{1}{c|}{4.02e-5}&\multicolumn{1}{c|}{7.66e-1}\\ \hline
\multicolumn{1}{|c|}{\bf WPDASC}&\multicolumn{1}{c|}{5.41e-1}&\multicolumn{1}{c|}{1.73e-1}&\multicolumn{1}{c|}{6.83e-1}&\multicolumn{1}{c|}{0}&\multicolumn{1}{c|}{1.01}&
\multicolumn{1}{c|}{5.62e-1}&\multicolumn{1}{c|}{1.81e-1}&\multicolumn{1}{c|}{7.03e-1}&\multicolumn{1}{c|}{0}&\multicolumn{1}{c|}{9.98e-1}&
\multicolumn{1}{c|}{6.17e-1}&\multicolumn{1}{c|}{1.99e-1}&\multicolumn{1}{c|}{7.37e-1}&\multicolumn{1}{c|}{0}&\multicolumn{1}{c|}{9.71e-1}\\ \hline
\end{tabular}}
\end{table}
 \section{Conclusion}\label{sec6}

  We proposed a zero-norm regularized smooth DC loss model and derived a family of
  equivalent nonconvex surrogates that cover the MCP and SCAD surrogates as special cases.
  For the proposed model and its SCAD surrogate, we developed the PG method with extrapolation
  to compute their $\tau$-stationary points and provided its convergence certificate by
  establishing the convergence of the whole iterate sequence and its local linear convergence rate.
  Numerical comparisons with several state-of-art methods demonstrate that the two new models
  are well suited for high noise and/or high sign flip ratio.
  An interesting future topic is to analyze the statistical error bound for them.


 \end{document}